\numberwithin{equation}{section}
\newcommand{\sh}{\mathbin{\shuffle}}
\newcommand{\fH}{\mathfrak{H}}
\DeclareMathOperator{\wt}{\mathrm{wt}}
\DeclareMathOperator{\dep}{\mathrm{dep}}
\newcommand{\cI}{\mathcal{I}}
\newcommand{\cA}{\mathcal{A}}
\newcommand{\cS}{\mathcal{S}}
\newcommand{\cZ}{\mathcal{Z}}
\newcommand{\cR}{\mathcal{R}}
\newcommand{\bbQ}{\mathbb{Q}}
\newcommand{\bbZ}{\mathbb{Z}}
\newcommand{\bbR}{\mathbb{R}}
\newcommand{\bbC}{\mathbb{C}}
\newcommand{\ba}{\mathbf{a}}
\newcommand{\bb}{\mathbf{b}}
\newcommand{\be}{\mathbf{e}}
\newcommand{\bk}{\mathbf{k}}
\newcommand{\bl}{\mathbf{l}}
\newcommand{\bm}{\mathbf{m}}
\newcommand{\bn}{\mathbf{n}}
\newcommand{\bp}{\mathbf{p}}
\newcommand{\ilp}{\boldsymbol{p}}
\newcommand{\hcA}{\widehat{\cA}}
\newcommand{\hcS}{\widehat{\cS}}
\newcommand{\emp}{\varnothing}
\newcommand{\shift}{\mathrm{shift}}
\newcommand{\jump}[1]{\ensuremath{[\![#1]\!]}}
\newcommand{\ncjump}[1]{\ensuremath{\langle\!\langle#1\rangle\!\rangle}}
\newcommand{\KZ}{\mathrm{KZ}}
\newcommand{\ep}{\varepsilon}
\newcommand{\Ad}{\mathrm{Ad}}
\DeclareMathOperator{\reg}{reg}
\newcommand{\cRS}{\mathcal{RS}}
\newcommand{\hcRS}{\widehat{\cRS}}
\newcommand{\per}{\mathrm{per}}
\newcommand{\hper}{\widehat{\per}}
\theoremstyle{theorem}
\newtheorem{Thm}{Theorem}[section]
\newtheorem{Prop}[Thm]{Proposition}
\newtheorem{Lem}[Thm]{Lemma}
\newtheorem{Cor}[Thm]{Corollary}
\newtheorem{Conj}[Thm]{Conjecture}
\theoremstyle{definition}
\newtheorem{Def}[Thm]{Definition}
\theoremstyle{remark}
\newtheorem{Rem}[Thm]{Remark}
\title{On a lifting of $t$-adic symmetric multiple zeta values}
\author{Minoru Hirose and Hanamichi Kawamura}
\subjclass[2020]{11M32.}
\keywords{multiple zeta values, $t$-adic symmetric multiple zeta values}
\address[Minoru Hirose]{Institute For Advanced Research, Nagoya University, Furo-cho, Chikusa-ku, Nagoya, 464-8602, Japan}
\email{minoru.hirose@math.nagoya-u.ac.jp}
\address[Hanamichi Kawamura]{Department of Mathematics, Faculty of Science Division I, Tokyo University of Science, 1-3 Kagurazaka, Shinjuku-ku, Tokyo, 162-8601, Japan}
\email{1121026@ed.tus.ac.jp}
\begin{document}
\maketitle
\begin{abstract}
The $t$-adic symmetric multiple zeta value is a generalization of the symmetric multiple zeta value from the perspective of the Kaneko--Zagier conjecture. In this paper, we introduce a further generalization with a new parameter $s$, which we call the $(s,t)$-adic symmetric multiple zeta value. Then, the $(s,t)$-adic version of the $t$-adic double shuffle relations, duality and cyclic sum formula are established. A finite counterpart of the $(s,t)$-adic symmetric multiple zeta value is also discussed.
\end{abstract}
\section{Introduction}
An element of $\cI\coloneqq\bigsqcup_{r\ge 0}\bbZ_{\ge 1}^{r}$ is called an \emph{index}.
We say that an index $\bk=(k_{1},\ldots,k_{r})$ has the \emph{depth} $\dep(\bk)\coloneqq r$ and the \emph{weight} $\wt(\bk)=k_{1}+\cdots+k_{r}$.
The unique index with depth $0$ is the \emph{empty index} $\emp$.
For tuples of integers $\bk=(k_{1},\ldots,k_{r})$ and $\bl=(l_1,\ldots,l_r)$, we put
\begin{gather}
\bk_{[i]}\coloneqq(k_{1},\ldots,k_{i}),\quad\bk^{[i]}\coloneqq(k_{i+1},\ldots,k_{r})\qquad(0\le i\le r),\qquad\overleftarrow{\bk}=(k_{r},\ldots,k_{1}),\\
\bk\oplus\bl\coloneqq(k_1+l_1,\ldots,k_r+l_r),\quad b\left({\bk\atop\bl}\right)\coloneqq\prod_{i=1}^{r}\binom{k_{i}+l_{i}-1}{l_{i}}.
\end{gather}
Note that $\overleftarrow{\emp}=\emp$ and $\bk_{[0]}=\bk^{[r]}=\emp$. Furthermore, we denote by $(\bk,\bl)$ the concatenation of indices $\bk$ and $\bl$.
If an index $\bk$ has the last component greater than $1$ or $\bk=\emp$, we call $\bk$ an \emph{admissible} index.
The \emph{multiple zeta value} (MZV) for an admissible index $\bk=(k_1,\ldots,k_r)$ is defined by
\[\zeta(\bk)\coloneqq\sum_{0 < n_{1} < \cdots < n_{r}}\frac{1}{n_{1}^{k_{1}}\cdots n_{r}^{k_{r}}},\qquad\zeta(\emp)\coloneqq1.\]
Several variants of these values has been studied. One such variant, proposed by Kaneko--Zagier \cite{kz21}, is the \emph{symmetric multiple zeta value} (SMZV)
\[\zeta_{\cS}(\bk)\coloneqq\sum_{i=0}^{\dep(\bk)}(-1)^{\wt(\bk^{[i]})}\zeta^{\bullet}(\bk_{[i]};T)\zeta^{\bullet}(\overleftarrow{\bk^{[i]}};T)\mod\zeta(2),\]
where $\zeta^{\bullet}(\bk;T)$ is the \emph{regularized polynomial} introduced by Ihara--Kaneko--Zagier \cite{ikz06} (for details, see Section \ref{sec:preliminaries}).
Jarossay \cite{jarossay17}, Hirose--Murahara--Ono \cite{hmo21} and Ono--Seki--Yamamoto \cite{osy21} defined a generalization of SMZVs, called the \emph{$t$-adic symmetric multiple zeta value}
\[\zeta_{\hcS}(\bk)\coloneqq\sum_{i=0}^{\dep(\bk)}(-1)^{\wt(\bk^{[i]})}\zeta^{\bullet}(\bk_{[i]};T)\zeta^{-t,\bullet}_{\shift}(\overleftarrow{\bk^{[i]}};T)~\mathrm{mod}~\zeta(2).\]
The purpose of this paper is to give a further generalization of $t$-adic SMZVs by considering a new parameter $s$ and to lift several properties known for $t$-adic SMZVs.
We call our new object \emph{$(s,t)$-adic symmetric multiple zeta values}. Throughout this paper, $s$ and $t$ denote indeterminates.
\begin{Def}[$(s,t)$-adic symmetric multiple zeta value]\label{stadic_smzv}
For an index $\bk$, we define
\[\zeta^{s,t}_{\hcS}(\bk)\coloneqq\sum_{i=0}^{\dep(\bk)}(-1)^{\wt(\bk^{[i]})}\zeta^{s,\bullet}_{\shift}(\bk_{[i]};T)\zeta^{-t,\bullet}_{\shift}(\overleftarrow{\bk^{[i]}};T)\mod\zeta(2).\]
\end{Def}
In the next section, we prove the well-definedness (independence of $\bullet$ and $T$) of this definition.
Our main results are summarized as follows (see later sections for the precise definitions of some symbols):
\begin{Thm}[$(s,t)$-adic harmonic relation]\label{stadic_harmonic_relation}
For indices $\bk$ and $\bl$, we have
\[\zeta^{s,t}_{\hcS}(\bk\ast\bl)=\zeta^{s,t}_{\hcS}(\bk)\zeta^{s,t}_{\hcS}(\bl).\]
\end{Thm}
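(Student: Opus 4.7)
The plan is to reduce Theorem \ref{stadic_harmonic_relation} to two ingredients: a combinatorial identity in the tensor square of the stuffle algebra, and the harmonic product relation for the shifted regularized polynomials. First I would establish the identity
\[
\begin{split}
&\sum_{\bn \in \bk \ast \bl} \sum_{i=0}^{\dep(\bn)} (-1)^{\wt(\bn^{[i]})}\, \bn_{[i]} \otimes \overleftarrow{\bn^{[i]}} \\
&\qquad = \sum_{i=0}^{\dep(\bk)} \sum_{j=0}^{\dep(\bl)} (-1)^{\wt(\bk^{[i]})+\wt(\bl^{[j]})}\, (\bk_{[i]} \ast \bl_{[j]}) \otimes (\overleftarrow{\bk^{[i]}} \ast \overleftarrow{\bl^{[j]}}),
\end{split}
\]
where the stuffle product is extended bilinearly to formal sums of indices. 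This identity follows from two classical structural facts: the deconcatenation map $\bn \mapsto \sum_i \bn_{[i]} \otimes \bn^{[i]}$ is an algebra homomorphism with respect to $\ast$, and reversal is an algebra automorphism of the stuffle algebra, so in particular $\overleftarrow{\bk \ast \bl} = \overleftarrow{\bk} \ast \overleftarrow{\bl}$. The sign factors match because whenever $\bn^{[i]}$ is a stuffle of $\bk^{[i']}$ and $\bl^{[j']}$, one has $\wt(\bn^{[i]}) = \wt(\bk^{[i']}) + \wt(\bl^{[j']})$.

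With this identity in hand, I would apply $\zeta^{s,\bullet}_{\shift}(\cdot;T)$ to the first tensor factor and $\zeta^{-t,\bullet}_{\shift}(\cdot;T)$ to the second, then multiply. On the left-hand side this produces exactly $\zeta^{s,t}_{\hcS}(\bk\ast\bl)$ by Definition \ref{stadic_smzv} extended bilinearly. On the right-hand side, invoking the harmonic product relation for the shifted regularized polynomials,
\[\zeta^{s,\bullet}_{\shift}(\bk;T)\,\zeta^{s,\bullet}_{\shift}(\bl;T) = \zeta^{s,\bullet}_{\shift}(\bk\ast\bl;T)\]
together with the analogous statement with $s$ replaced by $-t$, factorizes the double sum as the product $\zeta^{s,t}_{\hcS}(\bk)\,\zeta^{s,t}_{\hcS}(\bl)$. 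Reducing modulo $\zeta(2)$ at the end completes the proof.

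The main obstacle is the harmonic product relation for $\zeta^{s,\bullet}_{\shift}(\cdot;T)$ itself, which I would expect to be established in Section \ref{sec:preliminaries} as a formal consequence of the classical stuffle relation for the Ihara--Kaneko--Zagier regularized polynomial $\zeta^{\bullet}(\cdot;T)$, combined with the explicit binomial-type formula defining the $s$-shift. Once this property of the shifted polynomials is available, the remainder of the argument is a clean structural computation in the stuffle bialgebra as outlined above.
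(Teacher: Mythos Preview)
Your approach is essentially the paper's: the combinatorial identity you state is exactly the statement that the deconcatenation coproduct $\Delta$ is a $\ast$-algebra homomorphism (Lemma~\ref{harmonic_hopf}), and the paper's proof of Theorem~\ref{general_stadic_harmonic} is precisely the factorization $Z^{s,t,\ast}_{\hcS,T_1,T_2}=f\circ(g^{s}_{T_1}\otimes h^{t}_{T_2})\circ\Delta$ followed by the shifted harmonic relation (Corollary~\ref{shifted_harmonic_relation}). One correction: the relation $\zeta^{s,\bullet}_{\shift}(\bk;T)\,\zeta^{s,\bullet}_{\shift}(\bl;T)=\zeta^{s,\bullet}_{\shift}(\bk\ast\bl;T)$ holds only for $\bullet=\ast$, not for $\bullet=\sh$ (the $\sh$-regularized polynomial satisfies the shuffle, not the stuffle, product formula), so your argument must be run with $\bullet=\ast$ and then Proposition~\ref{stadic_independence} invoked to pass to $\zeta^{s,t}_{\hcS}$ modulo $\zeta(2)$---which is exactly what the paper does.
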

\begin{Thm}[$(s,t)$-adic shuffle relation]\label{stadic_shuffle_relation}
For indices $\bk$ and $\bl$, we have
\[\zeta^{s,t}_{\hcS}(\bk\sh_{s}\bl)=(-1)^{\wt(\bl)}\sum_{\bn\in\bbZ^{\dep(\bl)}_{\ge 0}}b\left({\bl\atop\bn}\right)\zeta^{s,t}_{\hcS}(\bk,\overleftarrow{\bl\oplus\bn})t^{\wt(\bn)}.\]
\end{Thm}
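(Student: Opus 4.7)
The strategy is to expand the left-hand side directly via Definition \ref{stadic_smzv} and exploit the algebraic structure of the regularized polynomials $\zeta^{s,\sh}_{\shift}(-;T)$ and $\zeta^{-t,\sh}_{\shift}(-;T)$. The $(s,t)$-adic harmonic relation (Theorem \ref{stadic_harmonic_relation}) will not enter; the essential inputs are the $\sh_{s}$-multiplicativity of $\zeta^{s,\sh}_{\shift}(-;T)$, the deconcatenation/reversal compatibility of the $\sh_{s}$-product, and a bridging identity that converts a $\sh_{s}$-product inside $\zeta^{-t,\sh}_{\shift}(-;T)$ into a concatenation with explicit binomial corrections in powers of $t$.

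The first step is to apply Definition \ref{stadic_smzv} to $\bk\sh_{s}\bl$, yielding a (linearly extended) sum over split positions $i$ of
\[
(-1)^{\wt((\bk\sh_{s}\bl)^{[i]})}\,\zeta^{s,\sh}_{\shift}((\bk\sh_{s}\bl)_{[i]};T)\,\zeta^{-t,\sh}_{\shift}(\overleftarrow{(\bk\sh_{s}\bl)^{[i]}};T).
\]
Assuming the $\sh_{s}$-shuffle satisfies the usual prefix/suffix decomposition and reversal compatibility, namely
\[
(\bk\sh_{s}\bl)_{[i]}=\sum_{j+k=i}\bk_{[j]}\sh_{s}\bl_{[k]},\qquad \overleftarrow{(\bk\sh_{s}\bl)^{[i]}}=\sum_{j+k=i}\overleftarrow{\bk^{[j]}}\sh_{s}\overleftarrow{\bl^{[k]}},
\]
the $\sh_{s}$-multiplicativity of $\zeta^{s,\sh}_{\shift}(-;T)$ then factors the first factor as $\zeta^{s,\sh}_{\shift}(\bk_{[j]};T)\zeta^{s,\sh}_{\shift}(\bl_{[k]};T)$.

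The main obstacle is the second factor, $\zeta^{-t,\sh}_{\shift}(\overleftarrow{\bk^{[j]}}\sh_{s}\overleftarrow{\bl^{[k]}};T)$: the shuffle is parameterized by $s$ while $\zeta^{-t,\sh}_{\shift}$ is multiplicative for $\sh_{-t}$, not $\sh_{s}$. I would establish (or invoke from the preliminaries) a key bridging identity of the form
\[
\zeta^{-t,\sh}_{\shift}(\mathbf{u}\sh_{s}\mathbf{v};T)=\sum_{\bn\in\bbZ_{\ge 0}^{\dep(\mathbf{v})}}b\!\left({\mathbf{v}\atop\bn}\right)\zeta^{-t,\sh}_{\shift}(\mathbf{u},\mathbf{v}\oplus\bn;T)\,t^{\wt(\bn)},
\]
which encodes the Taylor-type discrepancy between the $\sh_{s}$- and $\sh_{-t}$-products against the $-t$-shift-regularized polynomial. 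The binomial factor $b\!\left({\mathbf{v}\atop\bn}\right)t^{\wt(\bn)}$ appearing here is precisely the one that surfaces on the right-hand side of the theorem, so constructing and proving this bridging identity is the principal technical content of the argument.

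After substituting the bridging identity with $\mathbf{u}=\overleftarrow{\bk^{[j]}}$ and $\mathbf{v}=\overleftarrow{\bl^{[k]}}$, the remaining work is to reindex: the triple $(j,k,\bn)$ naturally parameterizes split positions of the concatenated index $(\bk,\overleftarrow{\bl\oplus\bn})$, with $j$ marking the split inside $\bk$ and $k$ (together with $\bn$) marking the split inside the reversed, shifted tail. The global sign $(-1)^{\wt(\bl)}$ emerges from the sign bookkeeping as every letter of $\bl$ migrates from the ``left half'' to the reversed ``right half'' of the symmetric decomposition. Re-applying Definition \ref{stadic_smzv} backward to each concatenated index $(\bk,\overleftarrow{\bl\oplus\bn})$ then produces exactly the right-hand side of the theorem. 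The entire proof thus reduces to the shuffle-coproduct structure of $\sh_{s}$ together with the bridging identity above.
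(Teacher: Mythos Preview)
Your plan has genuine gaps at each of its three structural steps.

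\textbf{The bridging identity is false.} Take $\mathbf{u}=\varnothing$ and $\mathbf{v}=(1)$. Since $\varnothing\sh_{s}(1)=(1)$, the left side of your proposed identity is $\zeta^{-t,\sh}_{\shift}((1);T)=\sum_{N\ge 0}\zeta^{\sh}((N+1);T)t^{N}$. The right side is $\sum_{n\ge 0}\zeta^{-t,\sh}_{\shift}((n+1);T)t^{n}=\sum_{N\ge 0}2^{N}\zeta^{\sh}((N+1);T)t^{N}$. These disagree already at $t^{1}$. More conceptually, $\mathbf{u}\sh_{s}\mathbf{v}$ lives in $\cR\jump{s}$, so $\zeta^{-t,\sh}_{\shift}(\mathbf{u}\sh_{s}\mathbf{v};T)$ genuinely depends on $s$; your right side does not. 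No identity of this shape can hold.

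\textbf{The multiplicativity and compatibility you assume for $\sh_{s}$ do not hold.} The paper records that $\sh_{s}$ is not associative and that the correct product formula is the \emph{asymmetric} one $\zeta^{s,\sh}_{\shift}(\bl\sh_{s}\bk)=\zeta^{\sh}(\bl)\,\zeta^{s,\sh}_{\shift}(\bk)$ (the first factor is the \emph{un}shifted regularized value). Hence $\zeta^{s,\sh}_{\shift}$ is not $\sh_{s}$-multiplicative in the sense your factorization step requires. The deconcatenation/reversal compatibilities you wrote down are properties of ordinary $\sh$, and the twist $w\sh_{s}w'=(1-e_{0}s)\bigl(w\sh\frac{1}{1-e_{0}s}w'\bigr)$ destroys both: reversal sends this to $\bigl(\overleftarrow{w}\sh\overleftarrow{w'}\frac{1}{1-e_{0}s}\bigr)(1-e_{0}s)$, which is not $\overleftarrow{w}\sh_{s}\overleftarrow{w'}$.

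\textbf{What the paper actually does.} The paper does not expand via Definition~\ref{stadic_smzv} at all. It writes $\zeta^{s,t,\sh}_{\hcS}(\bk)$ as a pairing $\langle\Phi^{\sh}_{\Ad},\frac{1}{1+e_{0}s}e_{\bk}e_{1}\frac{1}{1+e_{0}t}\rangle$ (Proposition~\ref{stadic_smzv_associator}) and uses that $\Phi^{\sh}_{\Ad}(X_{0},X_{1};0,0)$ is Lie-like, so that $L(ue_{1}\epsilon(v))=L((u\sh v)e_{1})$. The passage of $\bl$ from a $\sh_{s}$-factor on the left to a concatenated, reversed, $t$-weighted tail on the right is then a single algebraic identity in $\fH\jump{s,t}$, namely the computation of $e_{1}\epsilon\bigl(e_{\bl}\sh\frac{1}{1-e_{0}t}\bigr)$. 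The Lie-like step is the missing idea in your plan; it is what replaces (and makes unnecessary) any ``bridging identity'' between $\sh_{s}$ and $\zeta^{-t,\sh}_{\shift}$.
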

\begin{Thm}[$(s,t)$-adic duality]\label{stadic_duality}
For a non-empty index $\bk$, we have
\[\sum_{m,n=0}^{\infty}\zeta^{s,t,\star}_{\hcS}(\{1\}^{m},\bk,\{1\}^{n})s^{m}t^{n}=-\sum_{m,n=0}^{\infty}\zeta^{s,t,\star}_{\hcS}(\{1\}^{m},\bk^{\vee},\{1\}^{n})s^{m}t^{n}.\]
\end{Thm}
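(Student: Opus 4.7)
My plan is to follow the standard strategy for dualities of symmetric-type MZVs: lift the identity to a formal algebra, convert the mixed regularizations appearing in Definition \ref{stadic_smzv} into a single shuffle-regularized expression using Theorem \ref{stadic_shuffle_relation}, and then apply the classical duality anti-involution $\tau$ (the algebraic incarnation of the change of variables $u \mapsto 1-u$ on iterated integrals). This lifts the approach used for the $t$-adic duality by Hirose--Murahara--Ono to the $(s,t)$-adic setting, the extra parameter $s$ being accommodated by an analogous shift on the left-hand endpoint.

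First, I would encode indices $\bk$ as Hoffman words $w_{\bk} = x^{k_{1}-1}y\cdots x^{k_{r}-1}y$ in $\fH = \bbQ\langle x,y\rangle$ and introduce the formal generating element
\[ W_{\bk}(s,t) = \frac{1}{1-sy} \cdot \widetilde{w}_{\bk} \cdot \frac{1}{1-ty} \in \fH[[s,t]], \]
where $\widetilde{w}_{\bk}$ is the usual ``star'' modification (sum of $w_{\bk'}$ over coarsenings $\bk'$ of $\bk$). The left-hand side of the desired identity is then the image of $W_{\bk}(s,t)$ under the linear evaluation map $w \mapsto \zeta^{s,t}_{\hcS}(w) \bmod \zeta(2)$, and the right-hand side is the image of $W_{\bk^\vee}(s,t)$.

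Next, I would use Theorem \ref{stadic_shuffle_relation} to absorb the left factor $\frac{1}{1-sy}$ and the right factor $\frac{1}{1-ty}$ into the evaluation. Iterating the shuffle relation on the words $\{1\}^m$ converts the doubly-regularized expression built from $\zeta^{s,\bullet}_{\shift}$ and $\zeta^{-t,\bullet}_{\shift}$ into a single shuffle-regularized $(s,t)$-adic MZV associated to one element of $\fH$ that is $\tau$-covariant. Finally, I would apply the duality anti-automorphism $\tau:\fH\to\fH$ (with $\tau(x) = y$, $\tau(y) = x$ and reversing words), which satisfies $\tau(w_{\bk}) = w_{\bk^\vee}$ for admissible $\bk$ and intertwines the left and right generating factors after reversal. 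The $\tau$-equivariance of the shuffle-regularized evaluation, combined with the alternating sign $(-1)^{\wt(\bk^{[i]})}$ from Definition \ref{stadic_smzv}, produces the overall sign $-1$ of the stated duality.

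The main obstacle is the second step: cleanly collapsing the doubly-regularized generating series into a single shuffle-regularized value and verifying the precise $\tau$-covariance. This requires careful bookkeeping of how the shuffle relation in Theorem \ref{stadic_shuffle_relation} rearranges the coefficients $b\binom{\bl}{\bn}$ and the factors $t^{\wt(\bn)}$ when applied iteratively to trailing $1$'s, and of how the asymmetry between $s$ and $-t$ in Definition \ref{stadic_smzv} matches the asymmetry between the two endpoints of the iterated integral under $\tau$. Once this algebraic reformulation is in place, the duality follows automatically from the classical duality of iterated integrals on $(0,1)$.
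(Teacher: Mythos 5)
There is a genuine gap, and it sits exactly where you flag ``the main obstacle'': the identity does not reduce to the classical duality of iterated integrals on $(0,1)$. The classical duality is the path-reversal symmetry $u\mapsto 1-u$, i.e.\ the $2$-cycle relation $\Phi_{\KZ}(X_0,X_1)\Phi_{\KZ}(X_1,X_0)=1$ at the associator level. The duality being proved here is a symmetry of the \emph{adjoint} object $\ep(\Phi)X_1\Phi$ (equivalently of $\Phi_{\cRS}(X_0,X_1)=\exp(\pi iX_0/2)\Phi_{\KZ}(X_1,X_0)\exp(2\pi iX_1)\Phi_{\KZ}(X_0,X_1)\exp(\pi iX_0/2)$) under exchanging the letters attached to the points $0$ and $1$ while keeping $X_\infty=-X_0-X_1$ fixed; the paper proves this as $\Phi_{\cRS}(X_\infty,X_0)=\overline{\Phi_{\cRS}(X_\infty,X_1)}$, and the proof requires Drinfel'd's $3$-cycle (hexagon) relation in an essential way, not just the $2$-cycle relation. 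This is also why the theorem is forced to involve the star values and the generating series over $\{1\}^m,\bk,\{1\}^n$ with weights $s^mt^n$: the element dual to words in $X_\infty$ and $X_1$ is $\frac{1}{1-e_0s}(e_1-e_0)\frac{1}{1-e_1s}\omega(\bk)\frac{1}{1-e_1t}(e_1-e_0)\frac{1}{1-e_0t}$, and the star modification and the $\{1\}$-padding are exactly what this dual basis produces. Your proposal treats them as bookkeeping to be absorbed by Theorem \ref{stadic_shuffle_relation}, but that theorem only rewrites $\zeta^{s,t}_{\hcS}$ of an $s$-shifted shuffle product as a $t$-power series of other $\zeta^{s,t}_{\hcS}$'s; it does not collapse the doubly regularized symmetric value into a single shuffle-regularized MZV to which the $u\mapsto 1-u$ duality could be applied. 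No such collapse is available: a symmetric MZV is a sum of products of two regularized values, and its duality is not a formal consequence of the ordinary MZV duality.

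A concrete symptom that the mechanism is wrong is your account of the overall sign: you attribute the $-1$ to the alternating signs $(-1)^{\wt(\bk^{[i]})}$ in Definition \ref{stadic_smzv}. In the actual argument those signs are already built into the adjoint element and contribute nothing extra; the minus sign arises because the refined value carries a normalizing factor $\frac{1}{2\pi i}$ and the duality of $\Phi_{\cRS}$ holds only up to complex conjugation, so $\overline{1/(2\pi i)}=-1/(2\pi i)$ produces the sign, after which one reduces modulo $2\pi i$ via Proposition \ref{lifting_property} to land on $\zeta^{s,t,\star}_{\hcS}$. To repair your approach you would need to (i) introduce the lift $\zeta^{s,t}_{\hcRS}$ (or some equivalent object on which complex conjugation acts), (ii) prove the symmetry of the adjoint associator at $\infty$ using the hexagon relation, and (iii) only then apply the letter-exchange automorphism $e_0\leftrightarrow e_1$ --- which, incidentally, is an automorphism here rather than the reversing anti-automorphism of the classical duality, since it is conjugate to the classical one by the reversal already present in $\ep$.
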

\begin{Thm}[$(s,t)$-adic cyclic sum formula]\label{stadic_csf}
For an index $\bk=(k_{1},\ldots,k_{r})$ with $\wt(\bk)>r$, we have
\begin{align}
&\sum_{i=1}^{r}\sum_{j=0}^{k_{i}-1}\zeta^{s,t,\star}_{\hcS}(j+1,\bk^{[i]},\bk_{[i-1]},k_{i}-j)\\
&\qquad=\zeta^{s,t,\star}_{\hcS}(k+1)+\sum_{i=1}^{r}\sum_{j=0}^{\infty}(\zeta^{s,t,\star}_{\hcS}(j+1,\bk^{[i]},\bk_{[i]})s^{j}+\zeta^{s,t,\star}_{\hcS}(\bk^{[i]},\bk_{[i]},j+1)t^{j}).
\end{align}
\end{Thm}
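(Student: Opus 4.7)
The plan is to reduce the statement to cyclic sum formulas for the regularized building blocks $\zeta^{s,\bullet,\star}_{\shift}$ and $\zeta^{-t,\bullet,\star}_{\shift}$ appearing in Definition~\ref{stadic_smzv}, and then reassemble them through the depth-$i$ splitting. First, I would expand every $\zeta^{s,t,\star}_{\hcS}$ on both sides using the star analog of Definition~\ref{stadic_smzv}. Modulo $\zeta(2)$, this turns the identity into a statement about products $\zeta^{s,\bullet,\star}_{\shift}(\cdot;T)\,\zeta^{-t,\bullet,\star}_{\shift}(\cdot;T)$ for various indices, the $T$-dependence of which must cancel just as in the proof of well-definedness of $\zeta^{s,t}_{\hcS}$.

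Next, I would establish a shifted cyclic sum formula for each regularized factor separately, namely
\[
\sum_{i=1}^{r}\sum_{j=0}^{k_{i}-1}\zeta^{u,\bullet,\star}_{\shift}(j+1,\bk^{[i]},\bk_{[i-1]},k_{i}-j;T) \;=\; \zeta^{u,\bullet,\star}_{\shift}(\wt(\bk)+1;T) + (\text{correction in } u),
\]
valid for $u\in\{s,-t\}$. This is the natural shifted lift of the Ohno--Wakabayashi cyclic sum formula; the correction should be a generating series in the shift parameter $u$ whose shape is tailored to match, after the depth-$i$ expansion of the star $(s,t)$-adic value, the $s^{j}$ and $t^{j}$ sums on the right hand side of the theorem. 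A proof of such a shifted CSF can follow the classical argument (either by partial-fraction manipulation of the harmonic sum, or by the iterated-integral approach) while tracking how the shift regularization parametrizes the depth-one boundary divergences introduced by cyclic rotation of the index.

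Finally, pasting the two shifted formulas together through the depth-$i$ splitting of Definition~\ref{stadic_smzv} requires care: the cyclic rotation index $i$ and the splitting index $m$ interact in three regimes ($m<i$, $m$ at the boundary between the rotated prefix and suffix, and $m\ge i$), which must be treated separately. After cancellation of the sign factors $(-1)^{\wt(\bk^{[i]})}$ against the telescoping in the cyclic rotation, the bulk of the terms should yield $\zeta^{s,t,\star}_{\hcS}(\wt(\bk)+1)$, while the boundary terms assemble into the two generating series $\sum_{i,j}\zeta^{s,t,\star}_{\hcS}(j+1,\bk^{[i]},\bk_{[i]})s^{j}$ and $\sum_{i,j}\zeta^{s,t,\star}_{\hcS}(\bk^{[i]},\bk_{[i]},j+1)t^{j}$.

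The main obstacle, I expect, lies in the second step: correctly identifying the correction series for the shifted regularized cyclic sum formula so that, after the assembly in the third step, it produces exactly the symmetric $(s^{j},t^{j})$ shape in the statement rather than some asymmetric residue. The weight hypothesis $\wt(\bk)>r$ is needed both to guarantee that the depth-one term $\zeta^{s,t,\star}_{\hcS}(\wt(\bk)+1)$ is genuinely admissible and to avoid a degenerate symmetry that would collapse the boundary corrections; verifying that no extra $T$-dependent pieces survive modulo $\zeta(2)$ is the final bookkeeping check.
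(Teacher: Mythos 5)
Your first step (expanding $\zeta^{s,t,\star}_{\hcS}$ via the depth-$i$ splitting) and your second step (a cyclic sum formula for the shifted star values $\zeta^{u,\star,\bullet}_{\shift}$, reducing to Ohno--Wakabayashi) do match the opening moves of the paper's argument: the paper decomposes the left-hand side into the two ``pure'' boundary contributions ($i=0$ and $i=\dep$) plus a cross term, and proves exactly the shifted CSF you describe (its Proposition~\ref{shifted_csf}) to dispose of the pure contributions. However, there is a genuine gap in your third step. The cross term --- the sum of products $\zeta^{s,\star,\ast}_{\shift}(j+1,\bm;T_{1})\,\zeta^{-t,\star,\ast}_{\shift}(u-j,\overleftarrow{\bn};T_{2})$ in which the cyclic rotation cuts \emph{through the middle} of the index --- cannot be handled by ``pasting two shifted formulas together,'' because the cyclic sum entangles the two factors: the rotation parameter $i$ and the splitting parameter range over configurations where neither factor individually undergoes a full cyclic sum, so no CSF applies to either factor separately. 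This is where essentially all of the work in the paper lies, and it requires machinery your plan does not supply: (i) the antipode relation for the harmonic Hopf algebra (Corollary~\ref{antipode_relation}) to strip the star values down to non-star ones; (ii) the explicit comparison between $\ast$- and $\sh$-regularizations with the correction terms $R(\bk;T)$ (equation~\eqref{eq:explicit_reg_thm}), which produces four pieces $f_{1},\dots,f_{4}$ to be evaluated separately; and (iii) a telescoping identity in the shuffle algebra (Lemma~\ref{shuffle_lemma}) exploiting that $\Phi^{\sh}_{\Ad}$ is grouplike-adjacent, which is the actual engine that collapses the inner alternating sum over the cut position into the two generating series in $s$ and $t$. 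Your proposal flags ``identifying the correction series'' as the main obstacle but offers no mechanism for overcoming it; without something playing the role of the antipode/telescoping argument, the plan does not go through.

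A minor additional point: the hypothesis $\wt(\bk)>r$ is not needed to make $\zeta^{s,t,\star}_{\hcS}(k+1)$ well defined (the $(s,t)$-adic values are defined for arbitrary indices via regularization); it is inherited from the classical Ohno--Wakabayashi formula, which excludes $\bk=(\{1\}^{r})$.
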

More precisely, we prove them by generalizing to relations among values $\zeta^{s,t,\bullet}_{\hcS}(\bk;T_{1},T_{2})$, which includes the $(s,t)$-adic version of refined symmetric multiple zeta values (RSMZVs) (see \cite{hirose20}).

This paper is organized as follows. In Section \ref{sec:preliminaries}, we recall the algebraic formulation of MZVs and prove basic properties of $(s,t)$-adic SMZVs. Sections \ref{sec:harmonic_relation}--\ref{sec:cyclic_sum_formula} are used to prove Theorems \ref{stadic_harmonic_relation},  \ref{stadic_shuffle_relation}, \ref{stadic_duality} and \ref{stadic_csf} respectively. In the last section, we discuss the finite counterpart of $(s,t)$-adic SMZVs from the viewpoint of the Kaneko--Zagier conjecture.
\section*{Acknowledgements}
The authors would like to thank Prof.~Masataka Ono, Prof.~Shin-ichiro Seki and Takumi Maesaka for their helpful suggestions.
This work was supported by JSPS KAKENHI Grant Numbers JP18K13392 and JP22K03244.
This research was also supported by JST Global Science Campus ROOT Program.
\section{Preliminaries}\label{sec:preliminaries}
\subsection{Hoffman's algebraic formulation}\label{subsec:algebraic_formulation}
Let $\fH$ be the non-commutative algebra $\bbQ\langle e_{0},e_{1}\rangle$ and
\[\fH^{1}\coloneqq\bbQ+e_{1}\fH,\qquad\fH^{0}\coloneqq\bbQ+e_{1}\fH e_{0}\]
its subalgebras.
For an index $\bk=(k_{1},\ldots,k_{r})$, we set $e_{\bk}\coloneqq e_{1}e_{0}^{k_{1}-1}\cdots e_{1}e_{0}^{k_{r}-1}$ (the empty product is regarded as $1$).
For a set $S$, we write $\mathrm{span}_{\bbQ}S$ for the $\bbQ$-vector space of formal $\bbQ$-linear sums of elements of $S$.
If we write $\cR\coloneqq\mathrm{span}_{\bbQ}\cI$,
we have the $\bbQ$-linear bijection $\cR\to\fH^{1};\bk\to (-1)^{\dep(\bk)}e_{\bk}$.
Under this bijection, the subset $\cR'\coloneqq\mathrm{span}_{\bbQ}\cI'$, where $\cI'$ is the set of all admissible indices, corresponds to $\fH^{0}$.

We define two $\bbQ$-bilinear product structures on them: the first one is the \emph{harmonic product} $\ast$ on $\fH^{1}$, defined recursively as
\begin{gather}
1\ast w=w\ast 1=w,\\
we_{(k)}\ast w'e_{(l)}=(we_{(k)}\ast w')e_{(l)}+(w\ast w'e_{(l)})e_{(k)}-(w\ast w')e_{(k+l)}
\end{gather}
for $w,w'\in\fH^{1}$ and $k,l\in\bbZ_{\ge 1}$. The other one is the \emph{shuffle product} $\sh$ on $\fH$, also defined by inductive rules
\begin{gather}
1\sh w=w\sh 1=w,\\
we_{i}\sh w'e_{j}=(we_{i}\sh w')e_{j}+(w\sh w'e_{j})e_{i}
\end{gather}
where $w,w'\in\fH$ and $i,j\in\{0,1\}$. Through the correspondence $e_{\bk}\mapsto(-1)^{\dep(\bk)}\bk$, we regard these as products on $\cR$.
Then the harmonic relation (resp. shuffle relation) for multiple zeta values states that
\[\zeta(\bk\ast\bl)=\zeta(\bk)\zeta(\bl)\qquad (\text{resp.}~\zeta(\bk\sh\bl)=\zeta(\bk)\zeta(\bl))\]
for any admissible indices $\bk$ and $\bl$.
Here and in what follows, we $\bbQ$-linearly extend a map whose argument is an index.
It is known that $\fH^{i}_{\bullet}\coloneqq(\fH^{i},\bullet)$ becomes an associative and commutative $\bbQ$-algebra for the product $\bullet\in\{\ast,\sh\}$ and $i\in\{0,1\}$.
We define the $\bbQ$-linear map $Z\colon\fH^{0}\to\bbR$ by $Z(e_{\bk})=(-1)^{\dep(\bk)}\zeta(\bk)$ ($\bk\in\cI'$). Then the harmonic and shuffle relations are rephrased as that $Z\colon\fH^{0}_{\bullet}\to\bbR$ is a $\bbQ$-algebra homomorphism.

We now review the definition of regularized polynomials stated in \cite{ikz06}. In what follows, we use the symbol $\bullet$ as either $\ast$ or $\sh$.
Since the isomorphism $\fH^{1}_{\bullet}\simeq\fH^{0}_{\bullet}[e_{1}]$ proved by Hoffman \cite[Theorem 3.1]{hoffman97} and Reutenauer \cite[Theorem 6.1]{reutenauer93} exists, there uniquely exists a $\bbQ$-algebra homomorphism $Z^{\bullet}_{T}\colon\fH^{1}_{\bullet}\to\bbR[T]$ such that $Z^{\bullet}_T|_{\fH^{0}}=Z$ and $Z^{\bullet}_T(e_{1})=-T$.
For an index $\bk$, we put $\zeta^{\bullet}(\bk;T)\coloneqq (-1)^{\dep(\bk)}Z^{\bullet}_{T}(e_{\bk})$, which is called the \emph{$\bullet$-regularized polynomial}.
We abbreviate these notations as $Z^{\bullet}(w)$ and $\zeta^{\bullet}(\bk)$ when $T=0$.
Moreover, using the isomorphism $\fH_{\sh}\simeq\fH^{1}_{\sh}[e_{0}]$, we extend the map $Z^{\sh}_{T}$ as a $\bbQ$-algebra homomorphism $Z^{\sh}_{T}\colon(\fH,\sh)\to\bbR[T]$ satisfying $Z^{\sh}_{T}(e_{0})=0$.
We remark that
\begin{equation}\label{eq:regularization_whole_fH}
Z^{\sh}_{T}(e_{0}^{n}e_{\bk})=(-1)^{n+\dep(\bk)}\sum_{\substack{\bn\in\bbZ_{\ge 0}^{\dep(\bk)}\\\wt(\bn)=n}}b\left({\bk\atop\bn}\right)\zeta^{\sh}(\bk\oplus\bn;T)
\end{equation}
for a non-negative integer $n$ and an index $\bk$.

Recall the regularization theorem of Ihara--Kaneko--Zagier \cite{ikz06}.
Define an $\bbR$-linear map $\rho\colon\bbR[T]\to\bbR[T]$ by
\[\exp(TX)\Gamma_{0}(-X)=\sum_{n=0}^{\infty}\frac{\rho(T^{n})}{n!}X^{n},\]
where $\Gamma_{0}$ is the formal power series
\[\Gamma_{0}(X)\coloneqq\exp\left(\sum_{k=2}^{\infty}\frac{\zeta(k)}{k}X^{k}\right)\in\bbR\jump{X}.\]
\begin{Thm}[Ihara--Kaneko--Zagier {\cite[Theorem 1]{ikz06}}]\label{ikz_reg_thm}
For an index $\bk$, we have
\[\zeta^{\sh}(\bk;T)=\rho(\zeta^{\ast}(\bk;T)).\]
\end{Thm}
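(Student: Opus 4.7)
The plan is to reduce the identity to a computation on a single generator, namely the letter $e_{1}$, via the polynomial algebra structures $\fH^{1}_{\sh}=\fH^{0}_{\sh}[e_{1}]$ and $\fH^{1}_{\ast}=\fH^{0}_{\ast}[e_{1}]$ furnished by the Hoffman--Reutenauer theorem, and then to match the resulting generating series through the definition of $\rho$. On $\fH^{0}$ the two regularizations already coincide with the honest MZV evaluation $Z$, and $\rho$ fixes $\bbR$ (because $\Gamma_{0}(0)=1$ forces $\rho(1)=1$), so the claim is trivial for admissible indices. Everything interesting is concentrated on the pure-$e_{1}$ tower.

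The key calculation is to determine in closed form the two generating series
\[
\Psi^{\bullet}(X;T):=\sum_{n\ge 0}\zeta^{\bullet}(\{1\}^{n};T)\,X^{n}\in\bbR[T]\jump{X}\qquad(\bullet\in\{\ast,\sh\}).
\]
The stuffle side is purely algebraic: iterating the recursion $e_{1}\ast e_{1}=2e_{1}^{2}-e_{2}$ together with its generalisations, and using that $Z^{\ast}_{T}$ is a stuffle homomorphism with $Z^{\ast}_{T}(e_{1})=-T$, one packages the coefficients of $\Psi^{\ast}(X;T)$ as $\exp(TX)$ multiplied by an exponential in the $\zeta(k)/k$-sums that define $\Gamma_{0}$. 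The shuffle side arises analytically from the iterated-integral expansion (equivalently, from the KZ differential equation at the tangential base point at $0$) and evaluates to a simpler closed form. Comparing with the very definition $\exp(TX)\,\Gamma_{0}(-X)=\sum_{n}\rho(T^{n})X^{n}/n!$, one finds that applying $\rho$ coefficient-wise in $X$ converts $\Psi^{\ast}$ into $\Psi^{\sh}$, which establishes the theorem on every $e_{1}^{n}$.

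To extend to arbitrary $\bk$, I would fix an admissible $w\in\fH^{0}$ and analyse $e_{1}^{m}w$: shuffle-linearity of $Z^{\sh}_{T}$ breaks $Z^{\sh}_{T}(e_{1}^{m}w)$ into sums of products $Z^{\sh}_{T}(e_{1}^{m-j})\,Z(w')$ with $w'\in\fH^{0}$, and a parallel stuffle expansion controls $Z^{\ast}_{T}$; the generator-level identity from the previous step, together with the $\bbR$-linearity of $\rho$, then propagates the identity to every word in $\fH^{1}$. The main obstacle is the closed-form evaluation of $\Psi^{\sh}(X;T)$: this is the only genuinely non-formal step, requiring honest analysis of regularized Chen iterated integrals (or invocation of the Drinfeld associator near $0$), and it is precisely this analytic input that forces the appearance of the factor $\Gamma_{0}$ in the definition of $\rho$.
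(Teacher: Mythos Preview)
The paper does not prove this theorem at all; it is stated as a citation to Ihara--Kaneko--Zagier and used as a black box. So there is no ``paper's proof'' to compare against, and I can only comment on your sketch on its own merits.

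Two issues. First, a minor one: in the conventions here $\fH^{0}=\bbQ+e_{1}\fH e_{0}$, so the non-admissible tail sits on the \emph{right}. You should be analysing $w\,e_{1}^{m}$ with $w\in\fH^{0}$, not $e_{1}^{m}w$; the latter is already admissible and there is nothing to prove.

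Second, and substantively: your step 3 is where the entire content of the theorem lives, and it is not justified. Using Lemma~\ref{ikz_cor5} one finds, for admissible $\bk_{0}$,
\[
\zeta^{\bullet}(\bk_{0},\{1\}^{n};T)=\sum_{i=0}^{n}\zeta^{\bullet}(\bk_{0},\{1\}^{n-i};0)\,\frac{T^{i}}{i!},
\]
so after summing in $n$ the claimed identity is equivalent to
\[
\sum_{j\ge 0}\zeta^{\sh}(\bk_{0},\{1\}^{j};0)\,X^{j}=\Gamma_{0}(-X)\cdot\sum_{j\ge 0}\zeta^{\ast}(\bk_{0},\{1\}^{j};0)\,X^{j}
\]
for every admissible $\bk_{0}$. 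For $\bk_{0}=\varnothing$ this is exactly how $\rho$ is engineered (your computation $\Psi^{\sh}=e^{TX}$, $\Psi^{\ast}=e^{TX}/\Gamma_{0}(-X)$ makes the $\{1\}^{n}$ case a tautology). For $\bk_{0}\ne\varnothing$ the constants $\zeta^{\bullet}(\bk_{0},\{1\}^{j};0)$ genuinely depend on $\bk_{0}$, and nothing in the polynomial-algebra decomposition links the $\ast$-constants to the $\sh$-constants. Your phrase ``shuffle-linearity breaks $Z^{\sh}_{T}(e_{1}^{m}w)$ into sums of products $Z^{\sh}_{T}(e_{1}^{m-j})Z(w')$'' does not produce such a decomposition: $e_{1}^{m}\sh w$ is not a combination of terms of the shape $e_{1}^{j}\cdot(\text{admissible})$, and $\rho$ is only $\bbR$-linear, not multiplicative, so the $e_{1}^{n}$ identity cannot be ``propagated'' along either product. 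In the original IKZ argument this step is handled analytically, by comparing two asymptotic expansions of the truncated sums $\zeta_{M}(\bk)$ as $M\to\infty$ (one giving $\zeta^{\ast}(\bk;\log M+\gamma)$, the other bringing in $\Gamma$-asymptotics and hence $\Gamma_{0}$) uniformly in $\bk$; that is the analytic input you allude to, but it is needed for general $\bk$, not merely for $\bk=\{1\}^{n}$.
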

\subsection{Definition of $(s,t)$-adic SMZVs}
\begin{Def}[Shifted multiple zeta values]
For an index $\bk$, we define
\[\zeta^{t,\bullet}_{\shift}(\bk;T)\coloneqq\sum_{\bn\in\bbZ^{\dep(\bk)}_{\ge 0}}b\left(\bk\atop\bn\right)\zeta^{\bullet}(\bk\oplus\bn;T)(-t)^{\wt(\bn)}\in\bbR\jump{t}.\]
We omit the symbol $T$ when $T=0$.
\end{Def}
\begin{Rem}
When $\bk$ is admissible, this value coincides with the multiple zeta value of Hurwitz type
\[\zeta^{(t)}(\bk)\coloneqq\sum_{0<n_{1}<\cdots<n_{r}}\frac{1}{(n_{1}+t)^{k_{1}}\cdots (n_{r}+t)^{k_{r}}}\in\bbR\jump{t}.\]
More generally, even if $\bk$ is a non-admissible index, it matches the regularization of $\zeta^{(t)}$ defined by Kaneko--Xu--Yamamoto \cite{kxy20}.
Using their notation, we can describe this coincidence as $Z_{\ast}^{(t)}(\bk;T-\gamma-\psi(1+t))=\zeta^{t,\ast}_{\shift}(\bk;T)$ and $P^{(t)}(\bk;T)=\zeta^{t,\sh}_{\shift}(\bk;T)$.
\end{Rem}
In order to check the well-definedness of Definition \ref{stadic_smzv}, we consider the value
\[\zeta^{s,t,\bullet}_{\hcS}(\bk;T_{1},T_{2})\coloneqq\sum_{i=0}^{\dep(\bk)}(-1)^{\wt(\bk^{[i]})}\zeta^{s,\bullet}_{\shift}(\bk_{[i]};T_{1})\zeta^{-t,\bullet}_{\shift}(\overleftarrow{\bk^{[i]}};T_{2})\in\cZ[T_{1},T_{2}]\jump{s,t}\]
for an index $\bk$, where $\cZ$ denotes the $\bbQ$-algebra generated by all MZVs.
The $\bbQ$-linear operator $Z^{s,t,\bullet}_{\hcS,T_{1},T_{2}}\colon\fH^{1}\to\cZ[T_{1},T_{2}]\jump{s,t}$ is defined by $e_{\bk}\mapsto(-1)^{\dep(\bk)} \zeta^{s,t,\bullet}_{\hcS}(\bk;T_{1},T_{2})$.
Let $\{X_{0},X_{1}\}^{\times}$ be the set of all words (including the empty word $1$) consisting of $X_{0}$ and $X_{1}$,
and $\dep$ (resp. $\wt$) the map giving the number of $X_{1}$'s (resp. letters) in each word.
For an element $W=X_{a_{1}}\cdots X_{a_{n}}$ of $\{X_{0},X_{1}\}^{\times}$,
put $\overleftarrow{W}\coloneqq X_{a_{n}}\cdots X_{a_{1}}$ and extend it to $R\ncjump{X_0,X_1}$ for any $\bbQ$-algebra $R$ by
\[\overleftarrow{\sum_{W\in\{X_{0},X_{1}\}^{\times}}a_{W}W}\coloneqq\sum_{W\in\{X_{0},X_{1}\}^{\times}}a_{W}\overleftarrow{W}\qquad(a_{W}\in R).\]
We define a generating function of regularized polynomials by
\[\Phi^{\bullet}(X_{0},X_{1};T)\coloneqq\sum_{\substack{a_{1},\ldots,a_{n}\in\{0,1\}\\ n\ge 0}}Z^{\bullet}_{T}(e_{a_{1}}\cdots e_{a_{n}})X_{a_{n}}\cdots X_{a_{1}},\]
where $Z^{\ast}_T(w)$ ($w\in\fH$) is defined similarly to \eqref{eq:regularization_whole_fH} (replace $\sh$ by $\ast$).
Then the shifted multiple zeta value is written as
\begin{equation}\label{eq:shifted_mzv_coefficient}
\zeta^{t,\bullet}_{\shift}(\bk;T)=(-1)^{\dep(\bk)}\sum_{n=0}^{\infty}Z^{\bullet}_T(e_{0}^{n}e_{\bk})t^{n}\qquad(\bk\in\cI)
\end{equation}
and thus we have
\[\Phi^{\bullet}(X_{0},X_{1};T)=\sum_{n=0}^{\infty}\sum_{\bk\in\cI}Z^{\bullet}_{T}(e_{0}^{n}e_{\bk})\overleftarrow{X_{0}^{n}X_{\bk}}=\sum_{\bk\in\cI}(-1)^{\dep(\bk)}\overleftarrow{\zeta^{X_{0},\bullet}_{\shift}(\bk;T)X_{\bk}},\]
where $X_{\bk}\coloneqq X_{1}X_{0}^{k_{1}-1}\cdots X_{1}X_{0}^{k_{r}-1}$ for $\bk=(k_{1},\ldots,k_{r})\in\cI$.
\begin{Lem}\label{sigma_harmonic_homomorphism}
Let $\sigma^{t}$ be the ring homomorphism $\fH\to\fH\jump{t}$ defined by $e_{i}\mapsto e_{i}(1+e_{0}t)^{-1}$ for $i\in\{0,1\}$ and extend the harmonic product on $\fH^{1}\jump{t}$ by
\[\left(\sum_{m\ge 0}w_{m}t^{m}\right)\ast\left(\sum_{n\ge 0}w'_{n}t^{n}\right)=\sum_{m,n\ge 0}(w_{m}\ast w'_{n})t^{m+n}.\]
Then $\sigma^{t}$ induces the $\bbQ$-algebra homomorphism $\fH^{1}_{\ast}\to\fH^{1}_{\ast}\jump{t}$.
\end{Lem}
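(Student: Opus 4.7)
The mapping property $\sigma^t(\fH^1)\subseteq\fH^1\jump{t}$ is immediate: every element of $\fH^1$ is a $\bbQ$-linear combination of $1$ and words of the form $e_1w$, and $\sigma^t(e_1 w)=e_1(1+e_0t)^{-1}\sigma^t(w)$ still begins with $e_1$ after expansion, so the image lies in $\fH^1\jump{t}$.

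For the harmonic-product compatibility $\sigma^t(w\ast w')=\sigma^t(w)\ast\sigma^t(w')$, I would argue by induction on $\wt(w)+\wt(w')$. The base case ($w=1$ or $w'=1$) is trivial because $\sigma^t(1)=1$. The key explicit input is the formula
\[
f_k\coloneqq\sigma^t(e_{(k)})=e_1(1+e_0t)^{-1}\bigl(e_0(1+e_0t)^{-1}\bigr)^{k-1}=e_{(k)}(1+e_0t)^{-k}=\sum_{n\ge 0}\binom{k+n-1}{n}(-t)^n e_{(k+n)},
\]
which holds because $e_0$ commutes with the series $(1+e_0t)^{-1}$. For the inductive step, write $w=ue_{(k)}$ and $w'=ve_{(l)}$ with $u,v\in\fH^1$. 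Applying $\sigma^t$ to the recursive rule for $\ast$, using that $\sigma^t$ is a ring homomorphism for concatenation, and invoking the inductive hypothesis on the three strictly shorter products $ue_{(k)}\ast v$, $u\ast ve_{(l)}$, $u\ast v$, I obtain
\[
\sigma^t(w\ast w')=(\sigma^t(u)f_k\ast\sigma^t(v))f_l+(\sigma^t(u)\ast\sigma^t(v)f_l)f_k-(\sigma^t(u)\ast\sigma^t(v))f_{k+l}.
\]

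It remains to show that this equals $\sigma^t(w)\ast\sigma^t(w')=(\sigma^t(u)f_k)\ast(\sigma^t(v)f_l)$. I would expand the latter by the series expressions for $f_k$ and $f_l$, apply the $\ast$-recursion to each resulting monomial $\sigma^t(u)e_{(k+a)}\ast\sigma^t(v)e_{(l+b)}$, and split into three sums. The first two telescope back into the first two terms above by collecting the power series in $a$ and $b$ separately, since $\sum_a\binom{k+a-1}{a}(-t)^a\sigma^t(u)e_{(k+a)}=\sigma^t(u)f_k$ and similarly for $l,b$. The third sum reduces to verifying
\[
\sum_{a+b=c}\binom{k+a-1}{a}\binom{l+b-1}{b}=\binom{k+l+c-1}{c}\qquad(c\ge 0),
\]
which is the Chu--Vandermonde identity for negative binomials. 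This last identity is the only genuinely nontrivial step; everything else is bookkeeping with formal power series in $t$, and the argument never appeals to values of MZVs, only to the algebraic structure of $\fH$ and $\ast$.
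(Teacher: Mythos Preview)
Your proof is correct and follows essentially the same route as the paper: induction along the $\ast$-recursion, the explicit expansion $\sigma^t(e_{(k)})=e_{(k)}(1+e_0t)^{-k}$, and a reduction to a single binomial identity for the ``cross'' term. The only cosmetic differences are that the paper inducts on $\dep(\bk)+\dep(\bl)$ rather than weight, and that instead of invoking Chu--Vandermonde it packages the same identity as the generating-function equality $e_{(k+l)}(1+e_0t)^{-k}(1+e_0t)^{-l}=\sigma^t(e_{(k+l)})$.
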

\begin{proof}
We prove $\sigma^{t}(e_{\bk}\ast e_{\bl})=\sigma^{t}(e_{\bk})\ast\sigma^{t}(e_{\bl})$ for indices $\bk$ and $\bl$ by induction on $N=\dep(\bk)+\dep(\bl)$.
We assume the case $N<m$ and prove the case $N=m$.
When $\dep(\bk)=0$ or $\dep(\bl)=0$, this identity is clear from the definition of $\ast$.
For indices $\bk,\bl$ with the sum of depths $m-2$ and positive integers $k$ and $l$, the inductive hypothesis establishes
\begin{align}
&\sigma^{t}(e_{\bk}e_{(k)}\ast e_{\bl}e_{(l)})\\&\qquad=(\sigma^{t}(e_{\bk}e_{(k)})\ast\sigma^{t}(e_{\bl}))\sigma^{t}(e_{(l)})+(\sigma^{t}(e_{\bk})\ast\sigma^{t}(e_{\bl}e_{(l)}))\sigma^{t}(e_{(k)})-(\sigma^{t}(e_{\bk})\ast\sigma^{t}(e_{\bl}))\sigma^{t}(e_{(k+l)}).
\end{align}
On the other hand, we have
\begin{align}
\sigma^{t}(e_{\bk}e_{(k)})\ast\sigma^{t}(e_{\bl}e_{(l)})
&=\sum_{m,n=0}^{\infty} \binom{k+m-1}{m}\binom{l+n-1}{n}(\sigma^t(e_{\bk})e_{(k+m)}\ast\sigma^{t}(e_{\bl})e_{(l+n)})(-t)^{m+n}\\
&=\begin{multlined}[t]
\sum_{m,n=0}^{\infty} \binom{k+m-1}{m}\binom{l+n-1}{n}\biggl((\sigma^{t}(e_{\bk})e_{(k+m)}\ast\sigma^{t}(e_{\bl}))e_{(l+n)}\\
+(\sigma^{t}(e_{\bk})\ast\sigma^{t}(e_{\bl})e_{(l+n)})e_{(k+m)}-(\sigma^{t}(e_{\bk})\ast\sigma^{t}(e_{\bl}))e_{(k+m+l+n)}\biggr)(-t)^{m+n}
\end{multlined}\\
&=\begin{multlined}[t]
(\sigma^{t}(e_{\bk}e_{(k)})\ast\sigma^{t}(e_{\bl}))\sigma^{t}(e_{(l)})+(\sigma^{t}(e_{\bk})\ast\sigma^{t}(e_{\bl}e_{(l)}))\sigma^{t}(e_{(k)})\\
-(\sigma^{t}(e_{\bk})\ast\sigma^{t}(e_{\bl}))\cdot\sum_{m,n=0}^{\infty}\binom{k+m-1}{m}\binom{l+n-1}{n}e_{(k+m+l+n)}(-t)^{m+n}.
\end{multlined}
\end{align}
The last series can be computed as
\[\sum_{m,n=0}^{\infty}\binom{k+m-1}{m}\binom{l+n-1}{n}e_{(k+m+l+n)}(-t)^{m+n}=e_{(k+l)}(1+e_{0}t)^{-k}(1+e_{0}t)^{-l}=\sigma^{t}(e_{(k+l)})\]
and thus we obtain the desired lemma.
\end{proof}
Combining $\zeta^{\ast}(\bk\ast\bl;T)=\zeta^{\ast}(\bk;T)\zeta^{\ast}(\bl;T)$ ($\bk,\bl\in\cI$) and Lemma \ref{sigma_harmonic_homomorphism}, we get the harmonic relation for $\zeta^{t,\ast}_{\shift}(\bk;T)$.
\begin{Cor}\label{shifted_harmonic_relation}
For an index $\bk$ and $\bl$, we have $\zeta^{t,\ast}_{\shift}(\bk\ast\bl;T)=\zeta^{t,\ast}_{\shift}(\bk;T)\zeta^{t,\ast}_{\shift}(\bl;T)$.
\end{Cor}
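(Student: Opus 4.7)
The plan is to realize the shifted MZV map as a composition $Z^{\ast}_{T}\circ\sigma^{t}$ of two $\bbQ$-algebra homomorphisms with respect to $\ast$, and then read off multiplicativity from functoriality. First I would make the link between $\sigma^{t}$ and $\zeta^{t,\ast}_{\shift}$ explicit. Since $e_{0}$ and $(1+e_{0}t)^{-1}$ commute in $\bbQ[e_{0}]\jump{t}$, unwinding the definition of $\sigma^{t}$ gives $\sigma^{t}(e_{1}e_{0}^{k-1})=e_{1}e_{0}^{k-1}(1+e_{0}t)^{-k}$, and expanding the geometric series factor by factor along an index $\bk$ yields
\[
\sigma^{t}(e_{\bk})=\sum_{\bn\in\bbZ_{\ge 0}^{\dep(\bk)}}b\!\left({\bk\atop\bn}\right)e_{\bk\oplus\bn}(-t)^{\wt(\bn)}.
\]
Applying $Z^{\ast}_{T}$ extended coefficient-wise to $\fH^{1}\jump{t}$ and using $Z^{\ast}_{T}(e_{\bk\oplus\bn})=(-1)^{\dep(\bk)}\zeta^{\ast}(\bk\oplus\bn;T)$, I would compare with the definition of $\zeta^{t,\ast}_{\shift}$ to obtain the key identity
\[
\zeta^{t,\ast}_{\shift}(\bk;T)=(-1)^{\dep(\bk)}Z^{\ast}_{T}(\sigma^{t}(e_{\bk})).
\]

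With this identity in hand, I would combine the harmonic relation for regularized values (which says $Z^{\ast}_{T}\colon\fH^{1}_{\ast}\to\bbR[T]$ is a $\bbQ$-algebra homomorphism) with Lemma \ref{sigma_harmonic_homomorphism} to conclude that $Z^{\ast}_{T}\circ\sigma^{t}\colon\fH^{1}_{\ast}\to\bbR[T]\jump{t}$ is multiplicative. Applying this to $e_{\bk}\ast e_{\bl}$ and transferring back to $\cR$ via the sign-twisted correspondence $e_{\bk}\leftrightarrow(-1)^{\dep(\bk)}\bk$, the signs $(-1)^{\dep(\bk)+\dep(\bl)}$ arising from the two sides cancel, leaving the desired identity $\zeta^{t,\ast}_{\shift}(\bk\ast\bl;T)=\zeta^{t,\ast}_{\shift}(\bk;T)\,\zeta^{t,\ast}_{\shift}(\bl;T)$.

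The only non-formal step is the evaluation of $\sigma^{t}(e_{\bk})$ in closed form, and this is essentially the same binomial-series manipulation already performed at the end of the proof of Lemma \ref{sigma_harmonic_homomorphism}; hence I do not anticipate any substantive obstacle beyond bookkeeping.
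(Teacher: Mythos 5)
Your proposal is correct and follows exactly the paper's (very terse) argument: the paper likewise obtains the corollary by combining Lemma \ref{sigma_harmonic_homomorphism} with the multiplicativity of $Z^{\ast}_{T}$, the implicit key identity being precisely your $\zeta^{t,\ast}_{\shift}(\bk;T)=(-1)^{\dep(\bk)}Z^{\ast}_{T}(\sigma^{t}(e_{\bk}))$. Your write-up simply makes explicit the closed form of $\sigma^{t}(e_{\bk})$ and the sign bookkeeping that the paper leaves to the reader.
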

From this harmonic relation and an explicit formula for the antipode of $\fH^{1}$ in Proposition \ref{harmonic_hopf}, we obtain the following:
\begin{Cor}\label{antipode_relation}
For an index $\bk$, we have
\[\sum_{i=0}^{\dep(\bk)}(-1)^{i}\zeta^{t,\ast}_{\shift}(\overleftarrow{\bk_{[i]}};T)\zeta^{t,\star,\ast}_{\shift}(\bk^{[i]};T)=\delta_{0,\dep(\bk)},\]
where $\delta_{i,j}$ is Kronecker's delta and $\zeta^{t,\star,\ast}_{\shift}$ is defined as in Section \ref{sec:cyclic_sum_formula}.
\end{Cor}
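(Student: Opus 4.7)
The plan is to reduce the identity to a purely algebraic statement in the Hoffman algebra $\fH^{1}$ and then invoke the antipode identity supplied by Proposition \ref{harmonic_hopf}. The starred quantity $\zeta^{t,\star,\ast}_{\shift}(\bk^{[i]};T)$ is, by its definition (recalled in Section \ref{sec:cyclic_sum_formula}), the value of $w\mapsto\zeta^{t,\ast}_{\shift}(w;T)$ at the $\star$-lift of $e_{\bk^{[i]}}$, obtained by summing $e_{\bl}$ over all indices $\bl$ produced from $\bk^{[i]}$ by combining consecutive entries. Coupled with Corollary \ref{shifted_harmonic_relation}, which asserts that $w\mapsto\zeta^{t,\ast}_{\shift}(w;T)$ is a $\bbQ$-algebra homomorphism from $\fH^{1}_{\ast}$ to $\bbR[T]\jump{t}$, each summand on the left-hand side of the claim collapses to the image of a single element of $\fH^{1}$.

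With this reformulation, the identity becomes equivalent to the algebraic statement that the element
\[\sum_{i=0}^{\dep(\bk)}(-1)^{i}\,\overleftarrow{e_{\bk_{[i]}}}\ast\bigl(e_{\bk^{[i]}}\bigr)^{\star}\in\fH^{1}\]
equals $1$ when $\bk=\emp$ and vanishes otherwise. This is precisely the antipode relation for the commutative Hopf algebra structure on $(\fH^{1},\ast)$ with the deconcatenation coproduct $\Delta(e_{\bk})=\sum_{i}e_{\bk_{[i]}}\otimes e_{\bk^{[i]}}$, together with the explicit form of the antipode given in Proposition \ref{harmonic_hopf}; the reversal and the $\star$-refinement in the desired identity are produced by that antipode formula, and the sign $(-1)^{i}$ arises as $(-1)^{\dep(\bk_{[i]})}$.

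The main obstacle is aligning the convention for the $\star$-refinement and the reversal with the precise form of the antipode stated in Proposition \ref{harmonic_hopf}; this is a combinatorial bookkeeping task, but once the match is made, applying the algebra homomorphism $w\mapsto\zeta^{t,\ast}_{\shift}(w;T)$ to the algebraic antipode relation yields the claim at once.
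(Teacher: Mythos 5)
Your proposal is correct and follows essentially the same route as the paper, which likewise deduces the corollary by applying the $\ast$-homomorphism of Corollary \ref{shifted_harmonic_relation} to the antipode identity $\mu\circ(S\otimes\mathrm{id})\circ\Delta=\eta\circ\epsilon$ in the Hopf algebra of Lemma \ref{harmonic_hopf}. The bookkeeping you defer does work out, with the one caveat that the antipode must be applied to $e_{\overleftarrow{\bk}}$ rather than $e_{\bk}$ (and the depth signs from the correspondence $e_{\bl}\mapsto(-1)^{\dep(\bl)}\bl$ tracked carefully) so that the star lands on $\bk^{[i]}$ while the reversal and the sign $(-1)^{i}$ land on $\bk_{[i]}$, exactly as in the stated identity.
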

For $w\in\fH^{1}$, we denote by $\reg_{\ast}(w)$ the constant term of $w$ with respect to the isomorphism $\fH^{1}_{\ast}\simeq\fH^{0}_{\ast}[e_{1}]$.
Then the equality $Z^{\ast}=Z\circ\reg_{\ast}$ holds by definition.
\begin{Lem}[{Ihara--Kaneko--Zagier \cite[Corollary 5]{ikz06}}]\label{ikz_cor5}
For a non-negative integer $n$ and $w\in\fH^{0}$, we have
\[we_{1}^{n}=\sum_{i=0}^{n}\frac{\reg_{\bullet}(we_{1}^{n-i})}{i!}\bullet\underbrace{e_{1}\bullet\cdots\bullet e_{1}}_{i}.\]
\end{Lem}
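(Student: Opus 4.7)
The plan is to introduce a concrete description of the $\bullet$-derivation on $\fH^{1}_{\bullet}$ dual to the generator $e_{1}$ in the isomorphism $\fH^{1}_{\bullet}\simeq\fH^{0}_{\bullet}[e_{1}]$, then apply it iteratively. Define the $\bbQ$-linear right-erasure map $R\colon\fH^{1}\to\fH^{1}$ on monomials by $R(1)=0$, $R(We_{1})=W$, and $R(We_{0})=0$ for a word $W$. My key claim is that $R$ is a $\bullet$-derivation on $\fH^{1}_{\bullet}$ with $R(e_{1})=1$ and $R|_{\fH^{0}}=0$. The latter two conditions are immediate: $R(e_{1})=1$ since $e_{1}=1\cdot e_{1}$, and $\fH^{0}=\bbQ+e_{1}\fH e_{0}$ consists of $1$ and words ending in $e_{0}$, all annihilated by $R$.

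Granting the derivation property, uniqueness of a $\bullet$-derivation on $\fH^{1}_{\bullet}$ with prescribed values on generators (which follows from $\fH^{1}_{\bullet}\simeq\fH^{0}_{\bullet}[e_{1}]$, under which the derivation in question corresponds to $\partial/\partial e_{1}$) identifies $R$ with this canonical derivation $D_{\bullet}$. Then $D_{\bullet}(we_{1}^{n})=we_{1}^{n-1}$ for $n\ge 1$, and iterating gives $D_{\bullet}^{j}(we_{1}^{n})=we_{1}^{n-j}$ for $0\le j\le n$, while $D_{\bullet}^{j}(we_{1}^{n})=0$ for $j>n$ since $D_{\bullet}|_{\fH^{0}}=0$. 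Writing the unique expansion $we_{1}^{n}=\sum_{i\ge 0}c_{i}\bullet[e_{1}^{\bullet i}]/i!$ with $c_{i}\in\fH^{0}$, the Leibniz rule together with $D_{\bullet}(c_{i})=0$ yields $D_{\bullet}^{j}(we_{1}^{n})=\sum_{k\ge 0}c_{k+j}\bullet[e_{1}^{\bullet k}]/k!$; applying $\reg_{\bullet}$ (the projection to the constant term in this expansion) then produces $c_{j}=\reg_{\bullet}(D_{\bullet}^{j}(we_{1}^{n}))=\reg_{\bullet}(we_{1}^{n-j})$ for $0\le j\le n$ and $c_{j}=0$ otherwise. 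Substitution recovers the desired formula.

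The main obstacle is verifying that $R$ is a $\bullet$-derivation, which I would handle by a case analysis on the recursive product rules. For $\bullet=\sh$, writing nonempty $X,Y\in\fH^{1}$ as $X=X'e_{a}$, $Y=Y'e_{b}$ with $a,b\in\{0,1\}$, the shuffle recursion $X\sh Y=(X'\sh Y)e_{a}+(X\sh Y')e_{b}$ gives $R(X\sh Y)=[a{=}1](X'\sh Y)+[b{=}1](X\sh Y')$, which matches $R(X)\sh Y+X\sh R(Y)$. For $\bullet=\ast$, with $X=X'e_{(k)}$ and $Y=Y'e_{(l)}$ where $k,l\ge 1$, the harmonic recursion produces three terms; since $e_{(m)}=e_{1}e_{0}^{m-1}$ ends in $e_{1}$ exactly when $m=1$, applying $R$ annihilates the contribution containing $e_{(k+l)}$ (as $k+l\ge 2$) and the remaining two contributions assemble into $R(X)\ast Y+X\ast R(Y)$. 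The boundary cases $X=1$ or $Y=1$ are immediate from $R(1)=0$.
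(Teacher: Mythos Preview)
Your argument is correct. Note that the paper does not give its own proof of this lemma; it simply cites \cite[Corollary~5]{ikz06}. Your approach---exhibiting the concrete right-erasure map $R$, verifying via the recursive definitions of $\sh$ and $\ast$ that it satisfies the Leibniz rule on $\fH^{1}_{\bullet}$, and then identifying it with $\partial/\partial e_{1}$ under $\fH^{1}_{\bullet}\simeq\fH^{0}_{\bullet}[e_{1}]$ so that Taylor's formula applies---is clean and self-contained. The key observations (that $e_{(k+l)}$ never ends in $e_{1}$ for $k,l\ge 1$, and that the two surviving harmonic terms match $R(X)\ast Y+X\ast R(Y)$) are exactly what one needs, and your handling of the boundary cases is fine. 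This is essentially the argument underlying the Ihara--Kaneko--Zagier result, phrased perhaps a bit more structurally than their original presentation.
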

\begin{Prop}\label{T_part}
We have
\[\Phi^{\bullet}(X_{0},X_{1};T)=\exp(-TX_{1})\Phi^{\bullet}(X_{0},X_{1};0).\]
\end{Prop}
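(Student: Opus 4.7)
The plan is to reduce Proposition \ref{T_part}, coefficient by coefficient, to an identity about $Z^\bullet_T(u e_1^k)$, prove that identity for $u \in \fH^0$ via Lemma \ref{ikz_cor5}, and then extend to general $u$ using the defining series of the shifted MZV. Every word $V \in \{X_0, X_1\}^\times$ admits a unique decomposition $V = X_1^k V^*$ with $V^*$ empty or starting with $X_0$; setting $u = e_{\overleftarrow{V^*}}$ (which is $1$ or ends in $e_0$), one has $e_{\overleftarrow{V}} = u e_1^k$, so comparing the coefficient of $V$ on both sides of the asserted identity reduces the proposition to
\[Z^\bullet_T(u e_1^k) = \sum_{i=0}^k \frac{(-T)^i}{i!}\, Z^\bullet_0(u e_1^{k-i}) \qquad (\ast)\]
for every such $u$ and every $k \ge 0$. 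When $u \in \fH^0$, $(\ast)$ follows at once by applying the $\bullet$-algebra homomorphism $Z^\bullet_T$ to the expansion $u e_1^k = \sum_{i=0}^k \frac{\reg_\bullet(u e_1^{k-i})}{i!} \bullet e_1^{\bullet i}$ from Lemma \ref{ikz_cor5}, and invoking $Z^\bullet_T(e_1) = -T$, $Z^\bullet_T|_{\fH^0} = Z$, and $Z^\bullet_0(u e_1^{k-i}) = Z(\reg_\bullet(u e_1^{k-i}))$.

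To handle general $u$, parametrize $u = e_0^n e_{\bk}$ with $\bk$ admissible (possibly empty) and $n \ge 0$. Weighting $(\ast)$ by $(-1)^{\dep(\bk) + k} t^n$ and summing over $n$ via \eqref{eq:shifted_mzv_coefficient} recasts it as the shifted MZV identity
\[\zeta^{t,\bullet}_\shift((\bk, \{1\}^k); T) = \sum_{i=0}^k \frac{T^i}{i!}\, \zeta^{t,\bullet}_\shift((\bk, \{1\}^{k-i}); 0).\]
I will prove this by expanding the left-hand side via the definition of $\zeta^{t,\bullet}_\shift$; splitting each summation variable $\bm \in \bbZ_{\ge 0}^k$ into an initial segment and a string of trailing zeros; applying the $u = e_\bl \in \fH^0$ specialization of $(\ast)$, namely $\zeta^\bullet((\bl, \{1\}^s); T) = \sum_a \frac{T^a}{a!} \zeta^\bullet((\bl, \{1\}^{s-a}))$ for admissible $\bl$, to each admissible truncation of the resulting index; and then swapping the summations. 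The coefficient of $T^a/a!$ is finally identified with the defining series of $\zeta^{t,\bullet}_\shift((\bk, \{1\}^{k-a}); 0)$ via the bijection between $\bm \in \bbZ_{\ge 0}^k$ with at least $a$ trailing zeros and free vectors $\bm' \in \bbZ_{\ge 0}^{k-a}$ (deleting the last $a$ entries).

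The main technical hurdle is this reindexing: trailing zeros of $\bm$ must be matched with the trailing $1$'s of the index that appears inside $\zeta^\bullet$, and care is needed to ensure that the switched sums exactly reproduce the right-hand side. The argument is uniform in $\bullet \in \{\ast, \sh\}$, with the $\ast$-case relying on the $\ast$-analog of \eqref{eq:regularization_whole_fH} used in the excerpt to extend $Z^\ast_T$ to all of $\fH$.
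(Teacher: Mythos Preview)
Your proof is correct and takes essentially the same approach as the paper: both decompose every index as $(\bk,\{1\}^n)$ with $\bk$ admissible, invoke Lemma~\ref{ikz_cor5} for the $\fH^0$ part, and pass through the shifted-MZV expansion to absorb the leading $e_0^m$. The paper stays at the generating-function level throughout and compresses your steps into a single citation of Lemma~\ref{ikz_cor5} applied to $\zeta^{X_0,\bullet}_{\shift}(\bk,\{1\}^n;T)$, whereas you extract coefficients first and spell out the reindexing that justifies that step; the content is the same.
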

\begin{proof}
For a non-negative integer $n$, $\{X\}^n$ denotes $n$ times repetition of $X$. Lemma \ref{ikz_cor5} shows
\begin{align}
\overleftarrow{\Phi^{\bullet}(X_{0},X_{1};T)}
&=\sum_{\bk\in\cI'}\sum_{n=0}^{\infty}(-1)^{\dep(\bk)+n}\zeta^{X_{0},\bullet}_{\shift}(\bk,\{1\}^{n};T)X_{\bk}X_{1}^{n}\\
&=\sum_{\bk\in\cI'}\sum_{n=0}^{\infty}\sum_{i=0}^{n}(-1)^{\dep(\bk)+n-i}\zeta^{X_{0},\bullet}_{\shift}(\bk,\{1\}^{n-i})X_{\bk}X_{1}^{n}\frac{(-T)^{i}}{i!}\\
&=\sum_{\bk\in\cI'}\sum_{i=0}^{\infty}\sum_{n=0}^{\infty}(-1)^{\dep(\bk)+n}\zeta^{X_{0},\bullet}_{\shift}(\bk,\{1\}^{n})X_{\bk}X_{1}^{n+i}\frac{(-T)^{i}}{i!}\\
&=\overleftarrow{\Phi^{\bullet}(X_{0},X_{1};0)}\exp(-TX_{1}),
\end{align}
and hence we get the conclusion.
\end{proof}
\begin{Prop}\label{reg_thm_associator}
We have
\[\Phi^{\sh}(X_{0},X_{1};T)= \Gamma_{0}(X_{1}) \Phi^{\ast}(X_{0},X_{1};T).\]
\end{Prop}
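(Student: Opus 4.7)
My approach is to extend the Ihara–Kaneko–Zagier regularization theorem from $\fH^{1}$ to all of $\fH$, and then to combine it with the explicit $T$-dependence provided by Proposition \ref{T_part} to extract the $\Gamma_{0}(X_{1})$ factor.

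The first step is to upgrade Theorem \ref{ikz_reg_thm} to the identity $Z^{\sh}_{T}=\rho\circ Z^{\ast}_{T}$ of $\bbQ$-linear maps $\fH\to\bbR[T]$. Every monomial in $\fH$ has the form $e_{0}^{n}$ or $e_{0}^{n}e_{\bk}$ with $\bk\in\cI$, and the $e_{0}^{n}$ case is trivial since both sides vanish for $n\ge 1$. For the remaining case, formula \eqref{eq:regularization_whole_fH} (and its harmonic analogue defining $Z^{\ast}_{T}$) writes $Z^{\bullet}_{T}(e_{0}^{n}e_{\bk})$ as a finite $\bbQ$-linear combination of the values $\zeta^{\bullet}(\bk\oplus\bn;T)$, so Theorem \ref{ikz_reg_thm} together with the $\bbR$-linearity of $\rho$ yields the claim. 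Translating to generating functions, this reads
\[\Phi^{\sh}(X_{0},X_{1};T)=\rho\bigl(\Phi^{\ast}(X_{0},X_{1};T)\bigr),\]
where $\rho$ acts coefficient-wise on the polynomial-in-$T$ coefficient of each monomial in the non-commuting variables $X_{0},X_{1}$.

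The second step is to compute $\rho(\exp(-TX_{1}))$. By Proposition \ref{T_part} we have $\Phi^{\ast}(X_{0},X_{1};T)=\exp(-TX_{1})\Phi^{\ast}(X_{0},X_{1};0)$, and the coefficients of $\Phi^{\ast}(X_{0},X_{1};0)$ are free of $T$, so $\rho$ passes through that factor. Expanding $\exp(-TX_{1})=\sum_{n\ge 0}\frac{(-X_{1})^{n}}{n!}T^{n}$ and specializing the defining identity of $\rho$ with $X$ replaced by $-X_{1}$, I would compute
\[\rho\bigl(\exp(-TX_{1})\bigr)=\sum_{n\ge 0}\frac{(-X_{1})^{n}}{n!}\rho(T^{n})=\exp(-TX_{1})\Gamma_{0}(X_{1}).\]
Since $\Gamma_{0}(X_{1})$ and $\exp(-TX_{1})$ both lie in the commutative subring generated by $X_{1}$ they commute, and applying Proposition \ref{T_part} once more yields
\[\Phi^{\sh}(X_{0},X_{1};T)=\Gamma_{0}(X_{1})\exp(-TX_{1})\Phi^{\ast}(X_{0},X_{1};0)=\Gamma_{0}(X_{1})\Phi^{\ast}(X_{0},X_{1};T),\]
as desired.

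The only technical point is the extension of IKZ from $\fH^{1}$ to $\fH$ for words with leading $e_{0}$'s, and this is routine once formula \eqref{eq:regularization_whole_fH} is in hand. After that, the proposition is essentially a direct transcription of the defining identity of $\rho$ at $X=-X_{1}$, so I do not anticipate any serious obstacle.
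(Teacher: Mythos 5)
Your proposal is correct and follows essentially the same route as the paper: apply the Ihara--Kaneko--Zagier theorem coefficient-wise to get $\Phi^{\sh}=\rho(\Phi^{\ast})$, use Proposition \ref{T_part} to isolate the $T$-dependence in the factor $\exp(-TX_{1})$, and evaluate $\rho(\exp(-TX_{1}))=\Gamma_{0}(X_{1})\exp(-TX_{1})$ from the defining identity of $\rho$ at $X=-X_{1}$. The paper's proof is a one-line version of this; your added care in extending $Z^{\sh}_{T}=\rho\circ Z^{\ast}_{T}$ from $\fH^{1}$ to all of $\fH$ via \eqref{eq:regularization_whole_fH}, and in noting that $\rho$ passes through the $T$-free factor $\Phi^{\ast}(X_{0},X_{1};0)$, is exactly the right justification for the steps the paper leaves implicit.
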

\begin{proof}
By Theorem \ref{ikz_reg_thm} and Proposition \ref{T_part}, we have
\[
\Phi^{\sh}(X_{0},X_{1};T) = \rho(\Phi^{\ast}(X_{0},X_{1};T)) = \rho(\exp(-TX_{1}))\Phi^{\ast}(X_{0},X_{1};0)=\Gamma_{0}(X_1)\Phi^{\ast}(X_{0},X_{1};T). \qedhere
\]
\end{proof}
Let $\ep$ be the anti-automorphism of $\bbR[T]\ncjump{X_{0},X_{1}}$ determined by $\ep(T)=T$ and $\ep(X_{i})=-X_{i}$ ($i\in\{0,1\}$),
and put
\[\Phi^{\bullet}_{\Ad}(X_{0},X_{1};T_{1},T_{2})\coloneqq\ep(\Phi^{\bullet}(X_{0},X_{1};T_{1}))X_{1}\Phi^{\bullet}(X_{0},X_{1};T_{2}).\]
For a commutative $\bbQ$-algebra $A$, we define the pairing $\langle\Phi,w\rangle\in A$ of the power series $\Phi\in A\ncjump{X_{0},X_{1}}$ and $w\in \fH$ by
\[\Phi=\sum_{\substack{a_{1},\ldots,a_{n}\in\{0,1\}\\ n\ge 0}}\langle\Phi,e_{a_1}\cdots e_{a_n}\rangle X_{a_1}\cdots X_{a_n}\]
and linearity.
Moreover, this is extended to $\fH\jump{s,t}$ as
\[\left\langle\Phi,~\sum_{m,n=0}^{\infty}w_{m,n}s^{m}t^{n}\right\rangle\coloneqq\sum_{m,n=0}^{\infty}\langle\Phi,~w_{m,n}\rangle s^{m}t^{n}\qquad(w_{m,n}\in\fH).\]
\begin{Prop}\label{stadic_smzv_associator}
For a non-empty index $\bk$, we define $\omega(\bk)\in\fH$ by $e_{1}\omega(\bk)=e_{\bk}$. Then we have
\[\zeta^{s,t,\bullet}_{\hcS}(\bk;T_{1},T_{2})=(-1)^{\wt(\bk)+\dep(\bk)}\left\langle\Phi^{\bullet}_{\Ad}(X_{0},X_{1};T_{1},T_{2}),~\frac{1}{1+e_{0}s}e_{1}\omega(\bk)e_{1}\frac{1}{1+e_{0}t}\right\rangle.\]
\end{Prop}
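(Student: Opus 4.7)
My plan is to prove the identity by directly unpacking the pairing on the right-hand side and matching it, term by term, with Definition \ref{stadic_smzv}.

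First I would translate the pairing with $\Phi^\bullet$ into a concrete formula. Reading off coefficients from the definition of $\Phi^\bullet$ shows that for an $\fH$-word $w = e_{b_1}\cdots e_{b_n}$ one has $\langle \Phi^\bullet(X_0,X_1;T), w\rangle = Z^\bullet_T(e_{b_n}\cdots e_{b_1})$; i.e.\ the pairing returns $Z^\bullet_T$ of the letter-wise reversal, which I denote by $\widetilde w$. Since $\ep$ is an anti-automorphism with $\ep(X_i)=-X_i$ and $\ep(T)=T$, applying $\ep$ to $\Phi^\bullet$ reverses each monomial and inserts a factor $(-1)^{|w|}$, so $\langle \ep(\Phi^\bullet(T)), w\rangle = (-1)^{|w|}Z^\bullet_T(w)$. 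Multiplicativity of the pairing in a three-fold product, specialized at the middle factor $X_1$, then yields
\[\langle \Phi^\bullet_{\Ad}(T_1,T_2),\, w\rangle = \sum_{w = u\cdot e_1\cdot v}(-1)^{|u|}\, Z^\bullet_{T_1}(u)\, Z^\bullet_{T_2}(\widetilde v).\]

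Next I would expand the argument. Using $e_1\omega(\bk)e_1 = e_{(\bk,1)}$ and geometric series,
\[\frac{1}{1+e_0 s}\, e_1\omega(\bk)\, e_1\, \frac{1}{1+e_0 t} \;=\; \sum_{m,n\ge 0}(-1)^{m+n}\, e_0^m\, e_{(\bk,1)}\, e_0^n\, s^m t^n.\]
For fixed $m,n$ the word $e_0^m e_{(\bk,1)} e_0^n$ has exactly $r+1$ occurrences of $e_1$. Indexing them by $i\in\{0,1,\ldots,r\}$ and picking the $(i+1)$-th, I obtain the decomposition $u = e_0^m e_{\bk_{[i]}}$, $v = e_0^{k_{i+1}-1}e_1\cdots e_1 e_0^{k_r-1} e_1 e_0^n$, whose reversal is $\widetilde v = e_0^n e_{\overleftarrow{\bk^{[i]}}}$, with $|u| = m + \wt(\bk_{[i]})$.

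Finally, I would sum over $m,n$ and identify shifted MZVs. By \eqref{eq:shifted_mzv_coefficient},
\[\sum_{m\ge 0} Z^\bullet_{T_1}(e_0^m e_{\bk_{[i]}})\, s^m = (-1)^{\dep(\bk_{[i]})}\zeta^{s,\bullet}_{\shift}(\bk_{[i]};T_1),\]
\[\sum_{n\ge 0} Z^\bullet_{T_2}(e_0^n e_{\overleftarrow{\bk^{[i]}}})(-t)^n = (-1)^{\dep(\overleftarrow{\bk^{[i]}})}\zeta^{-t,\bullet}_{\shift}(\overleftarrow{\bk^{[i]}};T_2).\]
Using $\dep(\bk_{[i]}) + \dep(\overleftarrow{\bk^{[i]}}) = \dep(\bk)$ and $\wt(\bk_{[i]}) = \wt(\bk) - \wt(\bk^{[i]})$, I can factor out $(-1)^{\wt(\bk)+\dep(\bk)}$; the remaining sum over $i$ is exactly the one in Definition \ref{stadic_smzv}, giving $\langle \Phi^\bullet_{\Ad}, \cdot\rangle = (-1)^{\wt(\bk)+\dep(\bk)}\zeta^{s,t,\bullet}_{\hcS}(\bk;T_1,T_2)$, which is equivalent to the claim since $(-1)^{\wt(\bk)+\dep(\bk)}$ is its own inverse. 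The hard part is purely sign bookkeeping; the only real subtlety is to keep track of the two different word-reversal conventions—the reversal of $X$-words implicit in the definition of $\Phi^\bullet$ and the reversal of the resulting $e$-word introduced by $\ep$—but no ingredient beyond \eqref{eq:shifted_mzv_coefficient} is needed.
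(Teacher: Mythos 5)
Your proposal is correct and follows essentially the same route as the paper's own proof: expand the geometric series, split the word $e_{0}^{m}e_{(\bk,1)}e_{0}^{n}$ at each occurrence of $e_{1}$ to factor the pairing through $\ep(\Phi^{\bullet}(T_{1}))$ and $\Phi^{\bullet}(T_{2})$, and then resum using \eqref{eq:shifted_mzv_coefficient}. The sign bookkeeping ($(-1)^{|u|}$ from $\ep$, the reversal convention in $\Phi^{\bullet}$, and $\wt(\bk_{[i]})=\wt(\bk)-\wt(\bk^{[i]})$) all checks out.
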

\begin{proof}
Write $\bk=(k_1,\ldots,k_r)$.
By definition, we can compute as
\begin{align}
&\left\langle\Phi^{\bullet}_{\Ad}(X_{0},X_{1};T_{1},T_{2}),~\frac{1}{1+e_{0}s}e_{1}\omega(\bk)e_{1}\frac{1}{1+e_{0}t}\right\rangle\\
&=\sum_{m,n=0}^{\infty}\langle\Phi^{\bullet}_{\Ad}(X_{0},X_{1};T_{1},T_{2}),~e_{0}^{m}e_{1}e_{0}^{k_{1}-1}\cdots e_{1}e_{0}^{k_{r}-1}e_{1}e_{0}^{n}\rangle (-s)^{m}(-t)^{n}\\
&=\begin{multlined}[t]
\sum_{m,n=0}^{\infty}\sum_{i=0}^{r}\langle\ep(\Phi^{\bullet}(X_{0},X_{1};T_{1})),~e_{0}^{m}e_{1}e_{0}^{k_{1}-1}\cdots e_{1}e_{0}^{k_{i}-1}\rangle\\
\cdot\langle\Phi^{\bullet}(X_{0},X_{1};T_{2}),~e_{0}^{k_{i+1}-1}e_{1}\cdots e_{0}^{k_{r}-1}e_{1}e_{0}^{n}\rangle(-s)^{m}(-t)^{n}
\end{multlined}\\
&=\begin{multlined}[t]
\sum_{m,n=0}^{\infty}\sum_{i=0}^{r}(-1)^{m+\wt(\bk_{[i]})}Z^{\bullet}_{T_{1}}(e_{0}^{m}e_{1}e_{0}^{k_{1}-1}\cdots e_{1}e_{0}^{k_{i}-1})\\
\cdot Z^{\bullet}_{T_{2}}(e_{0}^{n}e_{1}e_{0}^{k_{r}-1}\cdots e_{1}e_{0}^{k_{i+1}-1})(-s)^{m}(-t)^{n}
\end{multlined}\\
&=(-1)^{\wt(\bk)+r}\zeta^{s,t,\bullet}_{\hcS}(\bk;T_{1},T_{2}).
\end{align}
Here, we used \eqref{eq:shifted_mzv_coefficient} in the last equality.
\end{proof}
The following proposition guarantees the well-definedness of Definition \ref{stadic_smzv}.
\begin{Prop}\label{stadic_independence}
For an index $\bk$, we have $\zeta^{s,t,\bullet}_{\hcS}(\bk;T_{1},T_{2})=\zeta^{s,t,\bullet}_{\hcS}(\bk;0,T_{2}-T_{1})\jump{s,t}$ and
\[\zeta^{s,t,\sh}_{\hcS}(\bk;T_{1},T_{2})-\zeta^{s,t,\ast}_{\hcS}(\bk;T_{1},T_{2})\in\zeta(2)\cZ[T_{1},T_{2}]\jump{s,t}.\]
In particular, $\zeta^{s,t}_{\hcS}(\bk)\coloneqq\zeta^{s,t,\bullet}_{\hcS}(\bk;T,T)\mod\zeta(2)$ is independent of $T$ and $\bullet$.
\end{Prop}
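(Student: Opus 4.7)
The plan is to use the associator expression in Proposition \ref{stadic_smzv_associator} to reduce both claims to identities for the series $\Phi^{\bullet}_{\Ad}(X_0, X_1; T_1, T_2) = \ep(\Phi^{\bullet}(X_0, X_1; T_1)) X_1 \Phi^{\bullet}(X_0, X_1; T_2)$, which can be analyzed via Propositions \ref{T_part} and \ref{reg_thm_associator}. Since the pairing $\langle \cdot, w\rangle$ is linear in its first argument, any identity at the level of $\Phi^{\bullet}_{\Ad}$ translates directly to one for $\zeta^{s,t,\bullet}_{\hcS}(\bk; T_1, T_2)$.

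For the first claim, I substitute the factorization $\Phi^{\bullet}(X_0, X_1; T) = \exp(-T X_1) \Phi^{\bullet}(X_0, X_1; 0)$ from Proposition \ref{T_part}. Since $\ep$ is an anti-automorphism with $\ep(X_1) = -X_1$, it follows that $\ep(\Phi^{\bullet}(X_0, X_1; T_1)) = \ep(\Phi^{\bullet}(X_0, X_1; 0)) \exp(T_1 X_1)$. Because $X_1$ is central in $\bbQ\jump{X_1}$, I can merge the two exponentials flanking $X_1$ into $\exp(-(T_2 - T_1) X_1)$, yielding
\[\Phi^{\bullet}_{\Ad}(X_0, X_1; T_1, T_2) = \ep(\Phi^{\bullet}(X_0,X_1;0)) \, X_1 \exp(-(T_2 - T_1) X_1) \, \Phi^{\bullet}(X_0,X_1;0) = \Phi^{\bullet}_{\Ad}(X_0, X_1; 0, T_2 - T_1),\]
which gives the first identity after pairing.

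For the second claim, Proposition \ref{reg_thm_associator} gives $\Phi^{\sh}(X_0, X_1; T) = \Gamma_0(X_1) \Phi^{\ast}(X_0, X_1; T)$, and since $\Gamma_0(X_1)$ commutes with $X_1$, I obtain
\[\Phi^{\sh}_{\Ad}(X_0, X_1; T_1, T_2) = \ep(\Phi^{\ast}(X_0, X_1; T_1)) \cdot \ep(\Gamma_0(X_1)) \Gamma_0(X_1) \cdot X_1 \Phi^{\ast}(X_0, X_1; T_2).\]
The central computation is that odd powers cancel, so
\[\ep(\Gamma_0(X_1)) \Gamma_0(X_1) = \exp\!\left(\sum_{k=1}^{\infty} \frac{\zeta(2k)}{k} X_1^{2k}\right).\]
By Euler's formula, each $\zeta(2k)$ lies in $\bbQ \zeta(2)^k \subseteq \zeta(2) \cZ$, so every coefficient of $\ep(\Gamma_0(X_1)) \Gamma_0(X_1) - 1$ lies in $\zeta(2) \cZ$. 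The remaining factors contribute coefficients in $\cZ[T_1, T_2]$, so the difference $\Phi^{\sh}_{\Ad} - \Phi^{\ast}_{\Ad}$ has coefficients in $\zeta(2) \cZ[T_1, T_2]$, and pairing completes the second identity.

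The ``in particular'' assertion is now immediate: setting $T_1 = T_2 = T$ in the first identity forces $\zeta^{s,t,\bullet}_{\hcS}(\bk; T, T) = \zeta^{s,t,\bullet}_{\hcS}(\bk; 0, 0)$, which is manifestly independent of $T$, while the second identity yields the congruence $\zeta^{s,t,\sh}_{\hcS}(\bk; T, T) \equiv \zeta^{s,t,\ast}_{\hcS}(\bk; T, T) \pmod{\zeta(2)}$. No step here is really an obstacle; the only mildly delicate ingredient is the parity cancellation in the $\Gamma_0$ product together with Euler's formula.
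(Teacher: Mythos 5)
Your proposal is correct and follows essentially the same route as the paper: both reduce everything to the factorizations $\Phi^{\bullet}(X_0,X_1;T)=\exp(-TX_1)\Phi^{\bullet}(X_0,X_1;0)$ and $\Phi^{\sh}=\Gamma_0(X_1)\Phi^{\ast}$, compute $\Gamma_0(-X_1)\Gamma_0(X_1)=\exp\bigl(\sum_{k\ge1}\tfrac{\zeta(2k)}{k}X_1^{2k}\bigr)$, invoke Euler's theorem, and transfer the identities through the pairing of Proposition \ref{stadic_smzv_associator}. The only (immaterial) difference is that you prove the $\sh$ versus $\ast$ congruence directly at general $(T_1,T_2)$ rather than first normalizing to $(0,T_2-T_1)$ as the paper does.
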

\begin{proof}
Using Proposition \ref{T_part}, we obtain
\begin{align}
\Phi^{\bullet}_{\Ad}(X_{0},X_{1};T_{1},T_{2})
&=\ep(\exp(-T_{1}X_{1})\Phi^{\bullet}(X_{0},X_{1};0))X_{1}\exp(-T_{2}X_{1})\Phi^{\bullet}(X_{0},X_{1};0)\\
&=\ep(\Phi^{\bullet}(X_{0},X_{1};0))X_{1}\exp((T_{1}-T_{2})X_{1})\Phi^{\bullet}(X_{0},X_{1};0)\\
&=\Phi^{\bullet}_{\Ad}(X_{0},X_{1};0,T_{2}-T_{1}).
\end{align}
Thus we obtain the first assertion by Proposition \ref{stadic_smzv_associator}.
For the rest part, it suffices to prove
\[\Phi^{\sh}_{\Ad}(X_{0},X_{1};0,T)\equiv\Phi^{\ast}_{\Ad}(X_{0},X_{1};0,T)\mod\zeta(2)\cZ[T]\jump{s,t}.\]
From Euler's theorem $\zeta(2k)\in\bbQ\zeta(2)^{k}$ and Proposition \ref{reg_thm_associator}, this is established as
\begin{align}
\Phi^{\sh}_{\Ad}(X_{0},X_{1};0,T)
&=\ep(\Phi^{\sh}(X_{0},X_{1};0))X_{1}\Phi^{\sh}(X_{0},X_{1};T)\\
&=\ep(\Phi^{\ast}(X_{0},X_{1};0))\Gamma_{0}(-X_{1})X_{1}\Gamma_{0}(X_{1})\Phi^{\ast}(X_{0},X_{1};T)\\
&=\ep(\Phi^{\ast}(X_{0},X_{1};0))X_{1}\exp\left(\sum_{k=1}^{\infty}\frac{\zeta(2k)}{k}X_{1}^{2k}\right)\Phi^{\ast}(X_{0},X_{1};T)\\
&\equiv\Phi^{\ast}_{\Ad}(X_{0},X_{1};0,T)\mod\zeta(2)\cZ[T]\jump{s,t}.\qedhere
\end{align}
\end{proof}
Therefore the value $\zeta_{\hcS}^{s,t}(\bk)$ is well-defined and coincides with the $t$-adic SMZV $\zeta_{\hcS}(\bk)$ when $s=0$. Note that, in Komori's notation \cite{komori21}, it is also written as $\zeta_{\hcS}^{s,t}(\bk)=\zeta_{\widehat{\mathcal{U}}}(\bk;-s,t)$ modulo $2\pi i\cZ[2\pi i]\jump{s,t}$.
\section{$(s,t)$-adic harmonic relation}\label{sec:harmonic_relation}
\begin{Lem}[Hoffman \cite{hoffman99}]\label{harmonic_hopf}
With the following data, $\fH^{1}$ becomes a commutative Hopf algebra over $\bbQ$.
\begin{enumerate}[\rm(1)]
\item The product is $\ast$.
\item The unit is the embedding map $\bbQ\ni a\mapsto a\in\fH^{1}$.
\item The coproduct is the $\bbQ$-linear map $\Delta\colon\fH^{1}\mapsto\fH^{1}\otimes\fH^{1}$ defined as
\[\Delta(e_{\bk})\coloneqq\sum_{i=0}^{\dep(\bk)}e_{\bk_{[i]}}\otimes e_{\bk^{[i]}}\qquad (\bk\in\cI).\]
\item The counit is the $\bbQ$-linear map $\fH^{1}\to\bbQ$ defined by $e_{(k_1,\dots,k_r)}\mapsto \delta_{r,0}$.
\item The antipode is the $\bbQ$-linear map $\fH^{1}\to\fH^{1}$ given by
\[e_{\bk}\mapsto\sum_{\bl\preceq\bk}(-1)^{\dep(\bl)}e_{\overleftarrow{\bl}}.\]
Here, for indices $\bk=(k_1,\dots,k_r)$ and $\bl$ with the same weight, the condition $\bl\preceq\bk$ is defined as $\bl\in\{(k_{1}\circ\cdots\circ k_{r})\mid\circ\text{ is a comma },\text{ or a plus }+\}$ and we understand $\emp\preceq\emp$.
\end{enumerate}
\end{Lem}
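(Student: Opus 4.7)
The plan is to verify the Hopf algebra axioms in order, using that items (1) and (2) are already established in the preliminaries: $\fH^{1}_{\ast}$ is a commutative associative unital $\bbQ$-algebra. The counit axiom (4) is immediate because the recursion defining $\ast$ produces only terms of positive depth whenever at least one argument has positive depth, so $\epsilon(w\ast w')=\epsilon(w)\epsilon(w')$. Coassociativity and counit compatibility of the deconcatenation $\Delta$ in (3) are also direct: both $(\Delta\otimes\mathrm{id})\Delta(e_{\bk})$ and $(\mathrm{id}\otimes\Delta)\Delta(e_{\bk})$ expand to
\[
\sum_{0\le i\le j\le\dep(\bk)}e_{\bk_{[i]}}\otimes e_{(\bk^{[i]})_{[j-i]}}\otimes e_{\bk^{[j]}},
\]
and the counit identity is a one-line check.

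The first nontrivial step is to prove that $\Delta\colon\fH^{1}_{\ast}\to\fH^{1}_{\ast}\otimes\fH^{1}_{\ast}$ is a $\bbQ$-algebra homomorphism, i.e. $\Delta(e_{\bk}\ast e_{\bl})=\Delta(e_{\bk})\ast\Delta(e_{\bl})$. I would argue by induction on $N=\dep(\bk)+\dep(\bl)$ in direct analogy with the proof of Lemma \ref{sigma_harmonic_homomorphism}. Writing $\bk=(\bk',k)$ and $\bl=(\bl',l)$, apply the three-term recursion for $\ast$ to the left side and, on the right side, expand $\Delta$ on each factor via the deconcatenation identity
\[
\Delta(we_{(k)})=\Delta(w)\cdot(1\otimes e_{(k)})+we_{(k)}\otimes 1,
\]
where $\cdot$ denotes the concatenation product on $\fH^{1}\otimes\fH^{1}$. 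The two sides then match termwise by the inductive hypothesis.

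For the antipode in (5), the cleanest route is to observe that, once bialgebra compatibility has been established, $\fH^{1}$ is a connected graded bialgebra (graded by weight, with $\fH^{1}_{0}=\bbQ$), so a unique antipode $S$ exists automatically. It remains to verify that the explicit formula $S(e_{\bk})=\sum_{\bl\preceq\bk}(-1)^{\dep(\bl)}e_{\overleftarrow{\bl}}$ satisfies the antipode identity $m\circ(S\otimes\mathrm{id})\circ\Delta=u\circ\epsilon$, which reads
\[
\sum_{i=0}^{\dep(\bk)}\sum_{\bl\preceq\bk_{[i]}}(-1)^{\dep(\bl)}e_{\overleftarrow{\bl}}\ast e_{\bk^{[i]}}=\delta_{\bk,\emp}.
\]
I would check this by induction on $\wt(\bk)$: apply the recursion for $\ast$ to peel off the rightmost letter of $e_{\bk^{[i]}}$ and regroup, noting that the three cases in the harmonic recursion correspond exactly to the two choices ``comma'' or ``plus'' between adjacent entries in the definition of $\bl\preceq\bk_{[i]}$; after reindexing, the sums telescope.

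The step I expect to dominate the work is this final combinatorial cancellation, where the three-term harmonic recursion must be aligned with the partial-sum structure of $\preceq$. The bialgebra homomorphism induction is lengthy but mechanical, and the existence of an antipode via connectedness is standard, so the antipode formula is the genuine obstacle.
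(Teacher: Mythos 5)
The paper gives no proof of this lemma at all: it is quoted from Hoffman's work on quasi-shuffle products \cite{hoffman99}, so there is nothing internal to compare your argument against step by step. Your reconstruction is essentially the standard proof of Hoffman's theorem, and the individual steps are sound: coassociativity and the counit identities for deconcatenation are immediate; the compatibility $\Delta(w\ast w')=\Delta(w)\ast\Delta(w')$ does follow by the depth induction you describe, using $\Delta(we_{(k)})=\Delta(w)\cdot(1\otimes e_{(k)})+we_{(k)}\otimes 1$ together with the three-term recursion, in exact parallel with the proof of Lemma \ref{sigma_harmonic_homomorphism}; and since $\fH^{1}$ is graded by weight with degree-zero part $\bbQ$ and both $\ast$ and $\Delta$ respect the grading, an antipode exists and is unique, so only the closed formula requires verification. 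The one place your sketch is genuinely thin is that last verification: ``the three cases of the harmonic recursion correspond to the two choices comma or plus'' is an observation rather than an argument, and the cancellation runs simultaneously over the cut position $i$ and over the contractions $\bl\preceq\bk_{[i]}$, so it has to be set up carefully (your depth-$1$ and depth-$2$ checks, $-e_{(k)}+e_{(k)}=0$ and $e_{(k_1,k_2)}-e_{(k_1)}\ast e_{(k_2)}+e_{(k_2,k_1)}-e_{(k_1+k_2)}=0$, are worth writing out as anchors). A cleaner way to organize that step, given that you have already invoked uniqueness, is to note that the antipode necessarily satisfies the recursion $S(e_{\bk})=-\sum_{i=0}^{r-1}S(e_{\bk_{[i]}})\ast e_{\bk^{[i]}}$ and then show by induction on $\dep(\bk)$ that $T(e_{\bk})\coloneqq\sum_{\bl\preceq\bk}(-1)^{\dep(\bl)}e_{\overleftarrow{\bl}}$ obeys the same recursion; alternatively, Hoffman's own route transports the shuffle antipode $w\mapsto(-1)^{\ell(w)}\overleftarrow{w}$ through his exponential isomorphism, which is what produces the sum over contractions. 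What the paper's citation buys is brevity; what your version buys is a self-contained proof of exactly the identity $\sum_{i=0}^{\dep(\bk)}S(e_{\bk_{[i]}})\ast e_{\bk^{[i]}}=\delta_{0,\dep(\bk)}$ that the paper later uses to derive Corollary \ref{antipode_relation}.
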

\begin{Thm}\label{general_stadic_harmonic}
For indices $\bk$ and $\bl$, we have
\[\zeta^{s,t,\ast}_{\hcS}(\bk\ast\bl;T_{1},T_{2})=\zeta^{s,t,\ast}_{\hcS}(\bk;T_{1},T_{2})\zeta^{s,t,\ast}_{\hcS}(\bl;T_{1},T_{2}).\]
\end{Thm}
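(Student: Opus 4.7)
The plan is to prove Theorem \ref{general_stadic_harmonic} by realising the map $e_{\bk}\mapsto(-1)^{\dep(\bk)}\zeta^{s,t,\ast}_{\hcS}(\bk;T_{1},T_{2})$ as a convolution in the Hopf algebra $(\fH^{1},\ast,\Delta)$ supplied by Lemma \ref{harmonic_hopf}. Let $F\colon\fH^{1}\to\cZ[T_{1},T_{2}]\jump{s,t}$ denote this $\bbQ$-linear map. Because the $\bbQ$-linear bijection $\cR\to\fH^{1},\ \bk\mapsto(-1)^{\dep(\bk)}e_{\bk}$ intertwines the harmonic products on either side, sign tracking shows that the theorem is equivalent to $F$ being a $\bbQ$-algebra homomorphism from $\fH^{1}_{\ast}$ to the commutative algebra $\cZ[T_{1},T_{2}]\jump{s,t}$.

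To decompose $F$ as a convolution, I introduce
\[
\widetilde{Z}_{1}(e_{\bk}):=(-1)^{\dep(\bk)}\zeta^{s,\ast}_{\shift}(\bk;T_{1}),\qquad
\widetilde{Z}_{2}(e_{\bk}):=(-1)^{\dep(\bk)+\wt(\bk)}\zeta^{-t,\ast}_{\shift}(\overleftarrow{\bk};T_{2})
\]
and extend linearly. Using $\dep(\bk_{[i]})+\dep(\bk^{[i]})=\dep(\bk)$, a direct sign computation yields
\[
F=\mu\circ(\widetilde{Z}_{1}\otimes\widetilde{Z}_{2})\circ\Delta,
\]
where $\mu$ denotes multiplication in the target. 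Since $\Delta$ is itself a $\ast$-algebra homomorphism and the target is commutative, the standard bialgebra argument shows that this convolution of algebra homomorphisms is again an algebra homomorphism. It therefore suffices to verify that $\widetilde{Z}_{1}$ and $\widetilde{Z}_{2}$ are $\ast$-algebra homomorphisms.

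For $\widetilde{Z}_{1}$, this is immediate from Corollary \ref{shifted_harmonic_relation} with the parameter renamed $s$. For $\widetilde{Z}_{2}$, I factor $\widetilde{Z}_{2}=\widetilde{Z}_{2}^{\prime}\circ R$, where
\[
\widetilde{Z}_{2}^{\prime}(e_{\bk}):=(-1)^{\dep(\bk)}\zeta^{-t,\ast}_{\shift}(\bk;T_{2}),\qquad
R(e_{\bk}):=(-1)^{\wt(\bk)}e_{\overleftarrow{\bk}}.
\]
Again $\widetilde{Z}_{2}^{\prime}$ is a $\ast$-homomorphism by Corollary \ref{shifted_harmonic_relation}, whereas $R$ is a $\ast$-algebra automorphism of $\fH^{1}$ provided (i) $\ast$ is weight-homogeneous, which is transparent from its recursive definition, and (ii) $\ast$ is symmetric under reversal, i.e.\ $\overleftarrow{\bk\ast\bl}=\overleftarrow{\bk}\ast\overleftarrow{\bl}$ in $\cR$.

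The main obstacle is establishing (ii). I would prove it by induction on $\dep(\bk)+\dep(\bl)$ directly from the recursive definition of $\ast$ given in Section \ref{subsec:algebraic_formulation}, in the same spirit as the proof of Lemma \ref{sigma_harmonic_homomorphism}; the three terms on the right-hand side of the recursion for $we_{(k)}\ast w'e_{(l)}$ are individually compatible with simultaneously reversing both arguments, so the induction is essentially formal. Alternatively, one may invoke the standard fact that the harmonic product is a quasi-shuffle product in the sense of Hoffman, for which reversal symmetry is manifest. Once (ii) is in hand, the convolution decomposition together with the commutative-target bialgebra argument delivers the theorem.
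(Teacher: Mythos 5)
Your proof is correct and follows essentially the same route as the paper: the paper likewise realizes $Z^{s,t,\ast}_{\hcS,T_{1},T_{2}}$ as the convolution $f\circ(g^{s}_{T_{1}}\otimes h^{t}_{T_{2}})\circ\Delta$ in Hoffman's Hopf algebra (Lemma \ref{harmonic_hopf}) and concludes from Corollary \ref{shifted_harmonic_relation} that both tensor factors are $\ast$-homomorphisms into a commutative target. The only difference is that you spell out the reversal-and-sign automorphism $R$ and the reversal symmetry of the harmonic product, which the paper leaves implicit when asserting that $h^{t}_{T_{2}}$ is a $\ast$-homomorphism.
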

\begin{proof}
The proof proceeds similarly to that of the second formula in \cite[Proposition 12]{hirose20}. In our case, defining $\bbQ$-linear maps $g^{s}_{T_{1}}\colon\fH^{1}\to\cZ[T_{1}]\jump{s}$ and $h^{t}_{T_{2}}\colon\fH^{1}\to\cZ[T_{2}]\jump{t}$ by
\[g^{s}_{T_{1}}(e_{\bk})\coloneqq (-1)^{\dep(\bk)}\zeta^{s,\ast}_{\shift}(\bk;T_{1}),\qquad h^{t}_{T_{2}}(e_{\bk})\coloneqq(-1)^{\wt(\bk)+\dep(\bk)}\zeta^{-t,\ast}_{\shift}(\bk;T_{2})\]
and $f\colon\cZ[T_{1}]\jump{s}\otimes\cZ[T_{2}]\jump{t}\to\cZ[T_{1},T_{2}]\jump{s,t}$ by $X\otimes Y\mapsto XY$, we see that the composite $f\circ(g^{s}_{T_{1}}\otimes h^{t}_{T_{2}})\circ\Delta$ is equal to $Z^{s,t,\ast}_{\hcS,T_{1},T_{2}}$.
The theorem follows from Lemma \ref{harmonic_hopf} and the fact that $g^{s}_{T_{1}}$ and $h^{t}_{T_{2}}$ are homomorphisms about $\ast$ (Corollary \ref{shifted_harmonic_relation}).
\end{proof}
Theorem \ref{stadic_harmonic_relation} immediately follows by putting $T_{1}=T_{2}=0$ in Theorem \ref{general_stadic_harmonic} and using Proposition \ref{stadic_independence}.
\section{$(s,t)$-adic shuffle relation}\label{sec:shuffle_relation}
Define the $s$-shifted shuffle product $\shuffle_{s}$ on $\fH\jump{s}$ by
\[
w\shuffle_{s}w'=(1-e_{0}s)\left(w\shuffle\frac{1}{1-e_{0}s}w'\right)
\]
where $w,w'\in\fH\jump{s}$. Note that $\fH^{1}\jump{s}$ is closed by $\shuffle_{s}$. Thus we can regard $\shuffle_{s}$ as the binary operator on $\cR\jump{s}$. The $s$-shifted product itself is not associative, but $(u\shuffle v)\shuffle_{s}w=u\shuffle_{s}(v\shuffle_{s}w)$ holds. By definition, we have the $s$-shifted shuffle product formula
\[\zeta_{\shift}^{s,\shuffle}(\bl\shuffle_{s}\bk)=\zeta^{\shuffle}(\bl)\zeta_{\shift}^{s,\shuffle}(\bk).\]
\begin{Thm}
We have
\[
\zeta_{\widehat{\mathcal{S}}}^{s,t,\shuffle}(\bl\shuffle_{s}\bk)=(-1)^{\wt(\bl)}\sum_{\bn\in\bbZ_{\geq0}^{\dep(\bl)}}b{\bl \choose \bn}\zeta_{\widehat{\mathcal{S}}}^{s,t,\shuffle}(\bk,\overleftarrow{\bl\oplus\bn})t^{\wt(\bn)}.
\]
\end{Thm}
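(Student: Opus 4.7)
The plan is to establish the refined version at the level of regularized polynomials with parameters $T_{1}, T_{2}$:
\[
\zeta_{\hcS}^{s,t,\sh}(\bl \sh_{s} \bk; T_{1}, T_{2}) = (-1)^{\wt(\bl)}\sum_{\bn \in \bbZ_{\ge 0}^{\dep(\bl)}} b\left({\bl\atop\bn}\right) \zeta_{\hcS}^{s,t,\sh}(\bk, \overleftarrow{\bl \oplus \bn}; T_{1}, T_{2}) t^{\wt(\bn)},
\]
from which the theorem follows by setting $T_{1} = T_{2} = 0$ and invoking Proposition \ref{stadic_independence}. Via Proposition \ref{stadic_smzv_associator}, both sides can be rewritten as pairings of the associator-like series $\Phi^{\sh}_{\Ad}(X_{0}, X_{1}; T_{1}, T_{2}) = \ep(\Phi^{\sh}(X_{0}, X_{1}; T_{1}))\, X_{1}\, \Phi^{\sh}(X_{0}, X_{1}; T_{2})$ with explicit elements of $\fH\jump{s,t}$, reducing the desired identity to a combinatorial equality of noncommutative power series.

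The central algebraic tool is that $\Phi^{\sh}(X_{0}, X_{1}; T)$ is group-like for the shuffle product: for $u, v \in \fH$,
\[
\langle \Phi^{\sh}(X_{0}, X_{1}; T),\, u \sh v \rangle = \langle \Phi^{\sh}(X_{0}, X_{1}; T),\, u \rangle \cdot \langle \Phi^{\sh}(X_{0}, X_{1}; T),\, v \rangle,
\]
which follows from the shuffle multiplicativity of $Z^{\sh}_{T}$ together with the fact that reversal is a shuffle-algebra endomorphism (shuffle is commutative). The definition of $\sh_{s}$ is tailored to the factor $\frac{1}{1+e_{0}s}$ appearing in the formula of Proposition \ref{stadic_smzv_associator}: after the sign change $s \mapsto -s$, the outer factor $(1 - e_{0}s)$ in the definition $w \sh_{s} w' = (1 - e_{0}s)(w \sh \frac{1}{1 - e_{0}s} w')$ cancels against $\frac{1}{1+e_{0}s}$, converting $\sh_{s}$ into an ordinary shuffle on modified arguments. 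Group-likeness then factors the $\bl$-contribution from the $\bk$-contribution in the pairing.

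Having detached $\bl$, one must relocate it: on the LHS it sits in the left factor $\ep(\Phi^{\sh}(T_{1}))$ of $\Phi^{\sh}_{\Ad}$, while on the RHS the corresponding index $\overleftarrow{\bl \oplus \bn}$ lies on the right of the middle $X_{1}$, i.e., inside $\Phi^{\sh}(T_{2})$. This transfer is precisely the role of the $\ep$-involution: since $\ep$ reverses word order and flips the sign of each $X_{i}$, it converts an $e_{\bl}$-contribution on the left into an $e_{\overleftarrow{\bl}}$-contribution on the right carrying the sign $(-1)^{\wt(\bl)}$, and the remaining sum $\sum_{\bn} b\left({\bl\atop\bn}\right) e_{\overleftarrow{\bl \oplus \bn}} t^{\wt(\bn)}$ is then exactly the coefficient expansion of the shifted series $\zeta^{-t,\sh}_{\shift}$ applied to $\overleftarrow{\bl}$. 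The main obstacle is the combinatorial bookkeeping required to synchronize the three sources of reversal and sign—the $\ep$-involution in $\Phi^{\sh}_{\Ad}$, the reversal $\overleftarrow{\bl \oplus \bn}$ on the RHS, and the sign in the bijection $\bk \mapsto (-1)^{\dep(\bk)} e_{\bk}$—so that the coefficients of $s^{m}t^{n}$ match on both sides.
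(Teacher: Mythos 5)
Your core mechanism is the same as the paper's: rewrite both sides as pairings with $\Phi^{\sh}_{\Ad}$ via Proposition \ref{stadic_smzv_associator}, let the outer factor in the definition of $\sh_{s}$ cancel against $\frac{1}{1+e_{0}s}$, and transfer the $\bl$-factor across the middle $X_{1}$ using $\ep$ and the group-likeness of $\Phi^{\sh}$. However, your opening step --- proving the identity for general $(T_{1},T_{2})$ and then specializing --- does not work, because the $(T_{1},T_{2})$-refined statement is false. Take $\bk=\bl=(1)$ and look at the coefficient of $s^{0}t^{0}$: since $(1)\sh(1)=2(1,1)$ and $\zeta^{0,0,\sh}_{\hcS}((1,1);T_{1},T_{2})=\tfrac{1}{2}(T_{1}-T_{2})^{2}$, the left side equals $(T_{1}-T_{2})^{2}$ while the right side equals $-\tfrac{1}{2}(T_{1}-T_{2})^{2}$; these agree only when $T_{1}=T_{2}$. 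The structural reason is that the transfer step you describe is really the identity
\[
\left\langle \ep(\Phi)X_{1}\Phi,\;u e_{1}\epsilon(v)\right\rangle=\left\langle \ep(\Phi)X_{1}\Phi,\;(u\sh v)e_{1}\right\rangle,
\]
which requires $\ep(\Phi)X_{1}\Phi=\Phi^{-1}X_{1}\Phi$ to be primitive (Lie-like); by Proposition \ref{T_part}, $\Phi^{\bullet}_{\Ad}(X_{0},X_{1};T_{1},T_{2})=\ep(\Phi)X_{1}\exp((T_{1}-T_{2})X_{1})\Phi$, and $X_{1}\exp(cX_{1})$ is not primitive for $c\neq 0$, so the identity breaks for $T_{1}\neq T_{2}$. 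The repair is immediate: work at $T_{1}=T_{2}=0$ throughout (which is all the theorem asserts, and which Proposition \ref{stadic_independence} shows is equivalent to any $T_{1}=T_{2}$), exactly as the paper does.

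One further point of rigor: saying that group-likeness ``factors the $\bl$-contribution from the $\bk$-contribution in the pairing'' is not by itself a valid step, since the pairing is against $\ep(\Phi)X_{1}\Phi$ rather than against $\Phi$. What you actually need is the displayed transfer identity above (the paper's $L(ue_{1}\epsilon(v))=L((u\sh v)e_{1})$), which does follow from group-likeness of $\Phi$ but only after decomposing the word at each occurrence of $e_{1}$ and using the primitivity of the conjugate $\Phi^{-1}X_{1}\Phi$; this lemma should be stated and proved (or cited from \cite{hirose20}) rather than absorbed into the word ``factors.'' With that lemma in hand, your remaining bookkeeping --- expanding $e_{\bl}\sh\frac{1}{1-e_{0}t}$ to produce the $\bn$-sum and reading off the sign $(-1)^{\wt(\bl)}$ from $\ep$ --- matches the paper's computation.
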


\begin{proof}
Define $L:\fH\jump s\to\mathbb{R}\jump s$ by
\[
L(u)\coloneqq\left\langle \Phi_{\Ad}^{\shuffle}(X_{0},X_{1};0,0),u\right\rangle .
\]
By Proposition \ref{stadic_smzv_associator}, we have
\[
\zeta_{\hcS}^{s,t,\shuffle}(\bk)=(-1)^{\wt(\bk)+\dep(\bk)}L\left(\frac{1}{1+e_{0}s}e_{\bk}e_{1}\frac{1}{1+e_{0}t}\right).
\]
Since $\Phi_{\Ad}^{\shuffle}(X_{0},X_{1};0,0)$ is Lie-like, we have
\[
L(ue_{1}\epsilon(v))= L((u\shuffle v)e_{1}),
\]
where $\epsilon$ is the anti-automorphism defined by $\epsilon(e_{i})=-e_{i}$ for $i\in\{0,1\}$.
Thus,
\begin{align*}
 & (-1)^{\dep(\bk)+\dep(\bl)}\zeta_{\widehat{\mathcal{S}}}^{s,t,\shuffle}(\bl\shuffle_{s}\bk)\\
 & =(-1)^{\wt(\bk)+\wt(\bl)}L\left(\frac{1}{1+e_{0}s}(e_{\bl}\shuffle_{-s}e_{\bk})e_{1}\frac{1}{1+e_{0}t}\right)\\
 & =(-1)^{\wt(\bk)+\wt(\bl)}L\left(\left(e_{\bl}\shuffle\frac{1}{1+e_{0}s}e_{\bk}\right)e_{1}\frac{1}{1+e_{0}t}\right)\\
 & =(-1)^{\wt(\bk)+\wt(\bl)}L\left(\left(e_{\bl}\shuffle\frac{1}{1-e_{0}t}\shuffle\frac{1}{1+e_{0}s}e_{\bk}\right)e_{1}\right)\\
 & =(-1)^{\wt(\bk)+\wt(\bl)}L\left(\frac{1}{1+e_{0}s}e_{\bk}e_{1}\epsilon\left(e_{\bl}\shuffle\frac{1}{1-e_{0}t}\right)\right).
\end{align*}
Here,
\begin{align*}
e_{1}\epsilon\left(e_{\bl}\shuffle\frac{1}{1-e_{0}t}\right) & =e_{1}\epsilon\left(\frac{1}{1-e_{0}t}\sum_{\bn\in\bbZ_{\geq0}^{\dep(\bl)}}b{\bl \choose \bn}t^{\wt(\bn)}e_{\bl\oplus\bn}\right)\\
 & =\sum_{\bn\in\bbZ_{\geq0}^{\dep(\bl)}}b{\bl \choose \bn}t^{\wt(\bn)}e_{1}\epsilon(e_{\bl\oplus\bn})\frac{1}{1+e_{0}t}\\
 & =(-1)^{\wt(\bl)}\sum_{\bn\in\bbZ_{\geq0}^{\dep(\bl)}}b{\bl \choose \bn}
 (-t)^{\wt(\bn)}e_{\overleftarrow{\bl\oplus\bn}}e_{1}\frac{1}{1+e_{0}t},
\end{align*}
Thus,
\begin{align*}
(-1)^{\dep(\bk)+\dep(\bl)}\zeta_{\widehat{\mathcal{S}}}^{s,t,\shuffle}(\bl\shuffle_{s}\bk) & =\sum_{\bn\in\bbZ_{\geq0}^{\dep(\bl)}}(-1)^{\wt(\bk)}(-t)^{\wt(\bn)}b{\bl \choose \bn}L\left(\frac{1}{1+e_{0}s}e_{\bk}e_{\overleftarrow{\bl\oplus\bn}}e_{1}\frac{1}{1+e_{0}t}\right)\\
 & =\sum_{\bn\in\bbZ_{\geq0}^{\dep(\bl)}}(-1)^{\dep(\bk)+\dep(\bl)+\wt(\bl)}b{\bl \choose \bn}\zeta_{\widehat{\mathcal{S}}}^{s,t,\shuffle}(\bk,\overleftarrow{\bl\oplus\bn})t^{\wt(\bn)},
\end{align*}
which completes the proof.
\end{proof}

\section{$(s,t)$-adic duality}\label{sec:duality}
In this section, we introduce $(s,t)$-adic refined symmetric multiple zeta values in terms of the \emph{Knizhnik--Zamolodchikov associator} (=KZ associator) and prove the $(s,t)$-adic duality (Theorem \ref{stadic_duality}).
Note that $\Phi_{\KZ}(X_{0},X_{1})\coloneqq\Phi^{\sh}(X_{0},X_{1};0)$ is the KZ associator.
Drinfel'd \cite{drinfeld91} proved that the pair $(2\pi i,\Phi_{\KZ})$ is the Drinfel'd associator.
Especially, he proved the \emph{$2$-cycle relation} and \emph{$3$-cycle relation}.
\begin{Thm}[{\cite{drinfeld91}}]\label{23cycle}
Write $X_{\infty}\coloneqq -X_{0}-X_{1}$. Then we have the following.
\begin{description}
\item[$2$-cycle relation]\[\Phi_{\KZ}(X_{0},X_{1})\Phi_{\KZ}(X_{1},X_{0})=1.\]
\item[$3$-cycle relation]\[\Phi_{\KZ}(X_{0},X_{1})\exp(\pi iX_{0})\Phi_{\KZ}(X_{\infty},X_{0})\exp(\pi iX_{\infty})\Phi_{\KZ}(X_{1},X_{\infty})\exp(\pi iX_{1})=1.\]
\end{description}
\end{Thm}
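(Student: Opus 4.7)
The plan is to derive both relations from the monodromy of the universal Knizhnik--Zamolodchikov connection on the thrice-punctured sphere $\mathbb{P}^{1}(\bbC)\setminus\{0,1,\infty\}$. I would start by considering the KZ equation
\[\frac{dG}{dz}=\left(\frac{X_{0}}{z}+\frac{X_{1}}{z-1}\right)G,\]
whose solutions take values in $\bbC\ncjump{X_{0},X_{1}}$, and for each puncture $p\in\{0,1,\infty\}$ introduce the normalized local solution $G_{p}(z)$ characterized by a prescribed tangential asymptotic behaviour (e.g.\ $G_{0}(z)\sim z^{X_{0}}$ as $z\to 0^{+}$ and $G_{1}(z)\sim(1-z)^{X_{1}}$ as $z\to 1^{-}$). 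The KZ associator $\Phi_{\KZ}(X_{0},X_{1})$ is then the comparison element $G_{1}(z)^{-1}G_{0}(z)$ along the real interval $(0,1)$, which is independent of $z$ by flatness of the connection.

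For the $2$-cycle relation, the plan is to exploit the involution $z\mapsto 1-z$ of the punctured sphere, which swaps the punctures $0$ and $1$ and exchanges the roles of $X_{0}$ and $X_{1}$ in the connection form. Pulling back the two normalized solutions by this involution turns $G_{0}$ into the analogue of $G_{1}$ for the swapped equation and vice versa, so comparing the associators attached to the two orientations of the segment $(0,1)$ gives $\Phi_{\KZ}(X_{1},X_{0})=\Phi_{\KZ}(X_{0},X_{1})^{-1}$ immediately.

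For the $3$-cycle relation, I would compose the three analogous associators attached to the cyclic transport $0\to 1\to\infty\to 0$, realized via the Möbius transformations permuting $\{0,1,\infty\}$. Because the tangential base point at each cusp $p$ must be rotated by a half-turn when switching from the incoming to the outgoing edge of the triangle, one picks up a local factor $\exp(\pi iX_{p})$ at each vertex. After transporting through the three edges and inserting these half-monodromies, one obtains precisely the left-hand side of the $3$-cycle relation, which corresponds to a contractible loop in the thrice-punctured sphere and hence equals the identity.

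The hardest step is making the tangential base-point analysis rigorous, in particular tracking the precise phase of each half-monodromy and the ordering of the non-commutative factors. A full execution requires one of the standard formalisms---Deligne's tangential base points on the pro-unipotent fundamental groupoid, or Drinfel'd's original analytic argument via asymptotics of fundamental solutions---together with the verification that the normalizations at $\infty$ use the variable $X_{\infty}=-X_{0}-X_{1}$ coming from the residue of the connection at infinity. Once this bookkeeping is in place, the topological triviality (or orientation reversal) of the underlying paths produces both relations at once.
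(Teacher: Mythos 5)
The paper does not prove this theorem at all: it is imported verbatim from Drinfel'd \cite{drinfeld91}, so there is no internal proof to compare against. Your sketch is the standard (indeed, Drinfel'd's original) monodromy argument, and as an outline it is sound: the $2$-cycle relation from the involution $z\mapsto 1-z$, and the $3$-cycle relation from transporting a flat section around a contractible triangle with vertices at the three tangential base points, picking up $\exp(\pi i X_{p})$ at each cusp. Two caveats are worth flagging. First, the paper defines $\Phi_{\KZ}(X_{0},X_{1})=\Phi^{\sh}(X_{0},X_{1};0)$ purely combinatorially, as a generating series of shuffle-regularized multiple zeta values; your argument works with the holonomy comparison element $G_{1}(z)^{-1}G_{0}(z)$ of the KZ connection. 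Identifying these two objects (including the word-reversal and sign conventions built into the paper's definition of $\Phi^{\bullet}$) is a separate, standard but nontrivial step --- essentially the iterated-integral expression of regularized MZVs --- that your plan assumes silently. Second, the signs of the half-monodromies ($\pi i$ versus $-\pi i$) and the cyclic ordering of the six factors are determined by whether the continuation passes through the upper or lower half-plane and by the chosen orientation of the triangle; with the wrong choice one proves a conjugate or inverse of the stated identity rather than the identity itself. You correctly identify this bookkeeping as the hard part, but it is exactly where the proof lives, so the proposal as written is a correct strategy rather than a proof.
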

We define
\[\Phi_{\cRS}(X_{0},X_{1})\coloneqq\exp(\pi iX_{0}/2)\Phi_{\KZ}(X_1,X_0)\exp(2\pi iX_{1})\Phi_{\KZ}(X_0,X_1)\exp(\pi iX_{0}/2)\]
and
\[\zeta^{s,t}_{\hcRS}(\bk)\coloneqq\frac{(-1)^{\wt(\bk)+\dep(\bk)}}{2\pi i}\left\langle\Phi_{\cRS}(X_{0},X_{1}),~\frac{1}{1+e_{0}s}e_{1}\omega(\bk)e_{1}\frac{1}{1+e_{0}t}\right\rangle,\]
for a non-empty index $\bk$. We put $\zeta^{s,t}_{\hcRS}(\emp)\coloneqq\exp(-(s+t)\pi i/2)$.
\begin{Rem}
By an argument similar to the latter part of Proposition \ref{stadic_independence},
it holds that
\begin{align}
&\Phi^{\ast}_{\Ad}(X_{0},X_{1};\pi i/2,-\pi i/2)\\
&=\ep(\exp(-\pi iX_{1}/2)\Phi^{\ast}(X_{0},X_{1};0))X_{1}\exp(\pi iX_{1}/2)\Phi^{\ast}(X_{0},X_{1};0)\\
&=\ep(\Phi^{\sh}(X_{0},X_{1};0))\Gamma_{0}(-X_{1})^{-1}\exp(\pi iX_{1})X_{1}\Gamma_{0}(X_{1})^{-1}\Phi^{\sh}(X_{0},X_{1};0)\\
&=\ep(\Phi^{\sh}(X_{0},X_{1};0))\frac{\exp(2\pi iX_{1})-1}{2\pi i}\Phi^{\sh}(X_{0},X_{1};0)\\
&= \frac{1}{2\pi i} \exp(-\pi i X_{0}/2)\Phi_{\cRS}(X_{0},X_{1})\exp(-\pi iX_{0}/2)-\frac{1}{2\pi i}.
\end{align}
Hence we have
\begin{align}
&\zeta^{s,t}_{\hcRS}(\bk)\\
&= (-1)^{\wt(\bk)+\dep(\bk)}  \left\langle \exp(\pi iX_{0}/2)\Phi^{\ast}_{\Ad}(X_{0},X_{1};\pi i/2,-\pi i/2)\exp(\pi iX_{0}/2)+\frac{\exp(\pi iX_{0})}{2\pi i} ,~\frac{1}{1+e_{0}s}e_{1}\omega(\bk)e_{1}\frac{1}{1+e_{0}t}\right\rangle\\
&= (-1)^{\wt(\bk)+\dep(\bk)} \sum_{m,n=0}^{\infty}\biggl(\sum_{\substack{0\le a\le m\\ 0\le b\le n}} \frac{\left(\pi i/2\right)^{m-a+n-b}}{(m-a)!(n-b)!}\langle\Phi^{\ast}_{\Ad}(X_{0},X_{1};\pi i/2,-\pi i/2),~e_{0}^{a}e_{1}\omega(\bk)e_{1}e_{0}^{b}\rangle\biggr)(-s)^{m}(-t)^{n}\\
&=\exp(-(s+t)\pi i/2)\zeta^{s,t,\ast}_{\hcS}(\bk;\pi i/2,-\pi i/2).
\end{align}
This equality certifies that the harmonic relation for $(s,t)$-adic RSMZVs
\[\exp(-(s+t)\pi i/2)\zeta^{s,t}_{\hcRS}(\bk\ast\bl)=\zeta^{s,t}_{\hcRS}(\bk)\zeta^{s,t}_{\hcRS}(\bl)\]
holds for indices $\bk$ and $\bl$ via Theorem \ref{general_stadic_harmonic}. Furthermore, combining this result with the last assertion of \cite[Corollary 12]{hirose20}, we find that $\zeta_{\hcRS}^{0,0}(\bk)\in \cZ[2\pi i]$ is the complex conjugate of $\zeta^{RS}(\bk)$ in \cite{hirose20}.
\end{Rem}
\begin{Prop}\label{lifting_property}
For an index $\bk$, we have
\[\zeta^{s,t,\sh}_{\hcS}(\bk)\equiv\zeta^{s,t}_{\hcRS}(\bk)\mod 2\pi i\cZ[2\pi i]\jump{s,t}.\]
\end{Prop}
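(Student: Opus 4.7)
The plan is to deduce the congruence directly from the identity
\[
\zeta^{s,t}_{\hcRS}(\bk)=\exp(-(s+t)\pi i/2)\,\zeta^{s,t,\ast}_{\hcS}(\bk;\pi i/2,-\pi i/2)
\]
established in the Remark preceding the proposition (valid for non-empty $\bk$), and then to reduce this to $\zeta^{s,t,\sh}_{\hcS}(\bk)=\zeta^{s,t,\sh}_{\hcS}(\bk;0,0)$ by a chain of three modifications, each of which is trivially or easily seen to introduce only an error lying in $2\pi i\,\cZ[2\pi i]\jump{s,t}$.

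First, I would dispose of the scalar prefactor: since $\pi i/2=(2\pi i)/4$, we have $\exp(-(s+t)\pi i/2)\equiv 1\bmod 2\pi i\,\cZ[2\pi i]\jump{s,t}$, because every term in its power-series expansion other than the constant term is a rational multiple of some positive power of $2\pi i$. Second, I would swap $\ast$ for $\sh$ using Proposition \ref{stadic_independence}, which says that
$\zeta^{s,t,\sh}_{\hcS}(\bk;T_1,T_2)-\zeta^{s,t,\ast}_{\hcS}(\bk;T_1,T_2)\in\zeta(2)\cZ[T_1,T_2]\jump{s,t}$; after evaluating at $(T_1,T_2)=(\pi i/2,-\pi i/2)$ and using $\zeta(2)=-(2\pi i)^2/24\in 2\pi i\,\cZ[2\pi i]$, this difference lies in the target ideal. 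Third, Proposition \ref{stadic_independence} also gives the exact equality $\zeta^{s,t,\sh}_{\hcS}(\bk;\pi i/2,-\pi i/2)=\zeta^{s,t,\sh}_{\hcS}(\bk;0,-\pi i)$.

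It remains to show $\zeta^{s,t,\sh}_{\hcS}(\bk;0,-\pi i)\equiv\zeta^{s,t,\sh}_{\hcS}(\bk;0,0)\bmod 2\pi i\,\cZ[2\pi i]\jump{s,t}$. By Proposition \ref{T_part},
\[
\Phi^{\sh}_{\Ad}(X_0,X_1;0,-\pi i)-\Phi^{\sh}_{\Ad}(X_0,X_1;0,0)=\ep(\Phi^{\sh}(X_0,X_1;0))X_1\bigl(\exp(\pi i X_1)-1\bigr)\Phi^{\sh}(X_0,X_1;0),
\]
and since $\exp(\pi iX_1)-1=\sum_{n\ge 1}(2\pi i)^n X_1^n/(2^n n!)$ has all coefficients in $2\pi i\,\bbQ[2\pi i]$, pairing with the element $\tfrac{1}{1+e_0 s}e_1\omega(\bk)e_1\tfrac{1}{1+e_0 t}$ via Proposition \ref{stadic_smzv_associator} yields a congruence modulo $2\pi i\,\cZ[2\pi i]\jump{s,t}$. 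Chaining the four steps gives the claim for non-empty $\bk$; the empty-index case follows at once from $\zeta^{s,t,\sh}_{\hcS}(\emp)=1$ and $\zeta^{s,t}_{\hcRS}(\emp)=\exp(-(s+t)\pi i/2)\equiv 1$.

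I do not expect any genuine obstacle here: the nontrivial computation (rewriting $\Phi_{\cRS}$ in terms of $\Phi^{\ast}_{\Ad}$ via the $2$- and $3$-cycle relations and the regularization theorem) was already carried out in the Remark, so the proof is really a short bookkeeping argument about divisibility by $2\pi i$. The only point deserving care is tracking the powers of $\pi i$ versus $2\pi i$ in the substitution $(T_1,T_2)\mapsto(\pi i/2,-\pi i/2)$, but in each step the exponents in the relevant expansions are $\ge 1$ or come paired (as in $\zeta(2)$), so they always land in $2\pi i\,\cZ[2\pi i]$.
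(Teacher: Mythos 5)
Your argument is correct, and it is a genuinely different route from the one the paper takes. The paper proves Proposition \ref{lifting_property} directly from the $2$-cycle relation: since $\Phi_{\KZ}(X_{1},X_{0})\Phi_{\KZ}(X_{0},X_{1})=1$, expanding $\exp(2\pi iX_{1})$ to first order inside the definition of $\Phi_{\cRS}$ gives $\Phi_{\cRS}(X_{0},X_{1})\equiv\exp(\pi iX_{0})+2\pi i\,\Phi^{\sh}_{\Ad}(X_{0},X_{1};0,0)$ modulo $(2\pi i)^{2}\cZ[2\pi i]\ncjump{X_{0},X_{1}}$, and the $\exp(\pi iX_{0})$ term is annihilated by pairing against $\frac{1}{1+e_{0}s}e_{1}\omega(\bk)e_{1}\frac{1}{1+e_{0}t}$ for non-empty $\bk$; this is entirely self-contained and does not use the preceding Remark. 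You instead take the Remark's identity $\zeta^{s,t}_{\hcRS}(\bk)=\exp(-(s+t)\pi i/2)\,\zeta^{s,t,\ast}_{\hcS}(\bk;\pi i/2,-\pi i/2)$ as the starting point and reduce to $\zeta^{s,t,\sh}_{\hcS}(\bk;0,0)$ by divisibility bookkeeping through Propositions \ref{stadic_independence}, \ref{T_part} and \ref{stadic_smzv_associator}. Each of your four steps checks out: the prefactor and the $T$-shift both contribute errors with coefficients in $2\pi i\,\bbQ[2\pi i]$ multiplied against elements of $\cZ[2\pi i]\jump{s,t}$, and the $\ast$-versus-$\sh$ discrepancy lands in $\zeta(2)\cZ[2\pi i]\jump{s,t}\subseteq(2\pi i)^{2}\cZ[2\pi i]\jump{s,t}$; the empty-index case is handled correctly. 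The trade-off is that your proof is shorter only because it leans on the Remark, whose derivation already used the regularization theorem and the functional equation of $\Gamma_{0}$, whereas the paper's proof needs only the $2$-cycle relation; on the other hand, your version makes explicit that the whole congruence ultimately comes from $T$-shift invariance of $\Phi^{\sh}_{\Ad}$ modulo $2\pi i$, which is a clean conceptual point.
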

\begin{proof}
By the $2$-cycle relation, we get
\begin{align}
\Phi_{\cRS}(X_{0},X_{1})
&=\exp(\pi iX_{0}/2)\Phi_{\KZ}(X_{1},X_{0})\exp(2\pi iX_{1})\Phi_{\KZ}(X_{0},X_{1})\exp(\pi iX_{0}/2)\\
&=\exp(\pi iX_{0}/2)\Phi_{\KZ}(X_{1},X_{0}) \left( 1+2\pi iX_{1} + \frac{(2\pi iX_{1})^2}{2}+\cdots  \right)  \Phi_{\KZ}(X_{0},X_{1})\exp(\pi iX_{0}/2)\\
&\equiv \exp(\pi iX_{0}) + 2\pi i\Phi^{\sh}_{\Ad}(X_{0},X_{1};0)  \mod (2\pi i)^2\cZ[2\pi i]\ncjump{X_{0},X_{1}}.
\end{align}
The proposition is a consequence of this congruence.
\end{proof}
\begin{Prop}\label{duality_associator}
We have
\[\Phi_{\cRS}(X_{\infty},X_0)=\overline{\Phi_{\cRS}(X_{\infty},X_1)},\]
where $\overline{\Phi}$ denotes the complex conjugate of $\Phi$ (acting coefficientwise).
\end{Prop}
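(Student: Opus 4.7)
The plan is to unfold both sides using the definition of $\Phi_{\cRS}$, simplify via the $2$-cycle relation, track how complex conjugation acts on the factors, and then reduce the claim to a formal identity in $\bbC\ncjump{X_0,X_1}$ which follows from two instances of the $3$-cycle relation.

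To set notation, write $A=\Phi_{\KZ}(X_0,X_1)$, $B=\Phi_{\KZ}(X_\infty,X_0)$, $C=\Phi_{\KZ}(X_1,X_\infty)$ and $\alpha=\exp(\pi iX_0)$, $\beta=\exp(\pi iX_1)$, $\gamma=\exp(\pi iX_\infty)$. The $2$-cycle relation (applied to the pairs $(X_\infty,X_0)$ and $(X_\infty,X_1)$, since it holds formally in any two non-commuting indeterminates) gives $\Phi_{\KZ}(X_0,X_\infty)=B^{-1}$ and $\Phi_{\KZ}(X_\infty,X_1)=C^{-1}$, so the definition of $\Phi_{\cRS}$ yields
\[\Phi_{\cRS}(X_\infty,X_0)=\gamma^{1/2}B^{-1}\alpha^{2}B\gamma^{1/2},\qquad \Phi_{\cRS}(X_\infty,X_1)=\gamma^{1/2}C\beta^{2}C^{-1}\gamma^{1/2}.\]
The coefficients of $\Phi_{\KZ}$ are shuffle-regularised MZVs at $T=0$, hence real, so complex conjugation fixes $A,B,C$ and inverts $\alpha,\beta,\gamma$. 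The claim is therefore equivalent to the purely algebraic identity
\[\gamma B^{-1}\alpha^{2}B\gamma=C\beta^{-2}C^{-1}\qquad (\ast)\]
in $\bbC\ncjump{X_0,X_1}$.

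To prove $(\ast)$, I would combine the $3$-cycle relation $A\alpha B\gamma C\beta=1$ with its image under the automorphism $X_0\leftrightarrow X_1$ of $\bbC\ncjump{X_0,X_1}$. Using the $2$-cycle relation (together with the observation that $X_\infty=-X_0-X_1$ is symmetric in $X_0,X_1$), this swap sends $A\mapsto A^{-1}$, $B\mapsto C^{-1}$, $C\mapsto B^{-1}$, $\alpha\leftrightarrow\beta$, $\gamma\mapsto\gamma$, producing the second form $A^{-1}\beta C^{-1}\gamma B^{-1}\alpha=1$. The latter gives $\gamma B^{-1}=C\beta^{-1}A\alpha^{-1}$, so the left-hand side of $(\ast)$ becomes $C\beta^{-1}(A\alpha B)\gamma$; then the former 3-cycle supplies $A\alpha B=\beta^{-1}C^{-1}\gamma^{-1}$, collapsing the product to $C\beta^{-2}C^{-1}$, as required.

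The content of the argument is really just bookkeeping once the reduction $(\ast)$ is in place, so the main (mild) obstacle is recognising the correct order in which to apply the two forms of the $3$-cycle relation; no input beyond Drinfel'd's associator relations and the reality of MZV coefficients is needed.
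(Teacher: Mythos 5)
Your proof is correct and follows essentially the same route as the paper: reduce both sides to products of $\Phi_{\KZ}$'s and exponentials via the $2$-cycle relation and the reality of the KZ coefficients, then verify the resulting identity by combining the $3$-cycle relation with a second copy of it obtained from a transposition of $\{X_0,X_1,X_\infty\}$. The only (cosmetic) difference is that you use the swap $X_0\leftrightarrow X_1$ to produce the second $3$-cycle identity, whereas the paper uses $X_1\leftrightarrow X_\infty$.
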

\begin{proof}
From Theorem \ref{23cycle}, we see that
\[\Phi_{\KZ}(X_{\infty},X_{0})=\exp(-\pi iX_{0})\Phi_{\KZ}(X_{1},X_{0})\exp(-\pi iX_{1})\Phi_{\KZ}(X_{\infty},X_{1})\exp(-\pi iX_{\infty})\]
holds.
Furthermore, substituting $X_{1}$ and $X_{\infty}$ in Theorem \ref{23cycle}, we have
\[\Phi_{\KZ}(X_{0},X_{\infty})\exp(\pi iX_{0})\Phi_{\KZ}(X_{1},X_{0})=\exp(-\pi iX_{\infty})\Phi_{\KZ}(X_{1},X_{\infty})\exp(-\pi iX_{1}).\]
Hence we obtain
\begin{align}
&\Phi_{\cRS}(X_{\infty},X_{0})
\\&=\exp(\pi iX_{\infty}/2)\Phi_{\KZ}(X_{0},X_{\infty})\exp(\pi iX_{0})\Phi_{\KZ}(X_{1},X_{0})\exp(-\pi iX_{1})\Phi_{\KZ}(X_{\infty},X_{1})\exp(-\pi iX_{\infty}/2)\\
&=\exp(-\pi iX_{\infty}/2)\Phi_{\KZ}(X_{1},X_{\infty})\exp(-2\pi iX_{1})\Phi_{\KZ}(X_{\infty},X_{1})\exp(-\pi iX_{\infty}/2)\\
&=\overline{\Phi_{\cRS}(X_{\infty},X_{1})}.\qedhere
\end{align}
\end{proof}
The $(s,t)$-adic (refined) symmetric multiple zeta star value is defined by
\[\zeta^{s,t,\star}_{?}(\bk)\coloneqq\sum_{\bl\preceq\bk}\zeta^{s,t}_{?}(\bl)\]
for a non-empty index $\bk=(k_{1},\ldots,k_{r})$ and $?\in\{\hcS,\hcRS\}$.
Then we easily obtain
\[\zeta^{s,t,\star}_{\hcS}(\bk)\equiv\zeta^{s,t,\star}_{\hcRS}(\bk)\mod 2\pi i\cZ[2\pi i]\jump{s,t}\]
from Proposition \ref{lifting_property}.
\begin{Prop}\label{stadic_smzsv_associator}
For a non-empty index $\bk$, we have
\[\zeta^{s,t,\star}_{\hcRS}(\bk)=\frac{1}{2\pi i}\left\langle\Phi_{\cRS}(X_{\infty},X_{1}),~\frac{1}{1-e_{0}s}(e_{1}-e_{0})\omega(\bk)(e_{1}-e_{0})\frac{1}{1-e_{0}t}\right\rangle.\]
\end{Prop}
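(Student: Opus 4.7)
The plan is to expand both sides using their definitions and reduce the identity to a closed-form combinatorial sum combined with an algebraic description of the substitution $X_0\mapsto X_{\infty}=-X_0-X_1$ at the level of the pairing $\langle\cdot,\cdot\rangle$. First I would unpack the LHS: writing $\bk=(k_1,\dots,k_r)$, using $\zeta^{s,t,\star}_{\hcRS}(\bk)=\sum_{\bl\preceq\bk}\zeta^{s,t}_{\hcRS}(\bl)$ together with the associator formula defining each $\zeta^{s,t}_{\hcRS}(\bl)$, and noting $\wt(\bl)=\wt(\bk)$ throughout, the LHS becomes
\[\frac{(-1)^{\wt(\bk)}}{2\pi i}\left\langle\Phi_{\cRS}(X_0,X_1),\,\frac{1}{1+e_0s}\,e_1\Bigl(\sum_{\bl\preceq\bk}(-1)^{\dep(\bl)}\omega(\bl)\Bigr)e_1\,\frac{1}{1+e_0t}\right\rangle.\]
Parameterising $\bl\preceq\bk$ by subsets of $\{1,\ldots,r-1\}$ recording which internal letters of $\omega(\bk)$ keep their $e_1$ (the rest collapse to $e_0$), the inner sum evaluates in closed form to $(-1)^{r}e_0^{k_1-1}(e_1-e_0)e_0^{k_2-1}\cdots(e_1-e_0)e_0^{k_r-1}$.

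Next I would transform the RHS pairing. Introduce the $\bbQ$-algebra endomorphism $\sigma\colon\fH\to\fH$ determined by $\sigma(e_0)=-e_0$ and $\sigma(e_1)=e_1-e_0$, extended coefficientwise to $\fH\jump{s,t}$. A direct monomial-by-monomial expansion of the substitution $X_0\mapsto -X_0-X_1$ establishes the pairing identity $\langle\Phi(X_{\infty},X_1),w\rangle=\langle\Phi(X_0,X_1),\sigma(w)\rangle$ for any $w\in\fH\jump{s,t}$. Multiplicativity of $\sigma$ then yields $\sigma(1/(1-e_0s))=1/(1+e_0s)$, $\sigma(e_1-e_0)=e_1$, and $\sigma(\omega(\bk))=(-1)^{\wt(\bk)-r}e_0^{k_1-1}(e_1-e_0)\cdots(e_1-e_0)e_0^{k_r-1}$. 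Combining these with the closed form above and the sign coincidence $(-1)^{\wt(\bk)+r}=(-1)^{\wt(\bk)-r}$, both sides of the claimed equality collapse to the same pairing against $\Phi_{\cRS}(X_0,X_1)$.

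The main obstacle is the pairing identity in the second paragraph. One must track the pairing convention carefully---the generating series writes the $X_{a_i}$ factors in the same order as the $e_{a_i}$ factors---so that $\sigma$ acts as an algebra homomorphism rather than an antihomomorphism, and so that the signs $\sigma(e_0)=-e_0$ and $\sigma(e_1)=e_1-e_0$ come out correctly. Once this is established, it is the conceptual device that repackages the alternating sum defining $\zeta^{s,t,\star}_{\hcRS}(\bk)$ as a single pairing against the transformed associator $\Phi_{\cRS}(X_{\infty},X_1)$, which in turn makes the $(s,t)$-adic duality (Theorem~\ref{stadic_duality}) a direct consequence of Proposition~\ref{duality_associator}.
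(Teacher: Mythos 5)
Your proof is correct, and it is organized differently from the paper's. The paper works on the series side: it splits $\Phi_{\cRS}(X_{0},X_{1})$ into a part $\psi$ supported on words with at most one $X_{1}$ plus the generating part $2\pi i\sum\pm\zeta_{\hcRS}(\bk;a,b)X_{0}^{a}X_{1}\omega(\bk)X_{1}X_{0}^{b}$, performs the substitution $X_{0}\mapsto X_{\infty}$ there, recognizes that expanding $(-X_{0}-X_{1})^{k_{i}-1}$ produces exactly the star sums, and then extracts coefficients using the dual-basis relations $\langle X_{0}+X_{1},e_{0}\rangle=1$, $\langle X_{0}+X_{1},e_{1}-e_{0}\rangle=0$, $\langle X_{1},e_{0}\rangle=0$, $\langle X_{1},e_{1}-e_{0}\rangle=1$ (checking separately that $\psi$ pairs to zero). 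You instead work on the word side: you write the star value as a single pairing via the closed form $\sum_{\bl\preceq\bk}(-1)^{\dep(\bl)}e_{1}\omega(\bl)=(-1)^{r}e_{1}e_{0}^{k_{1}-1}(e_{1}-e_{0})\cdots(e_{1}-e_{0})e_{0}^{k_{r}-1}$, and transport the substitution $X_{0}\mapsto X_{\infty}$ to its adjoint, the algebra endomorphism $\sigma(e_{0})=-e_{0}$, $\sigma(e_{1})=e_{1}-e_{0}$. I verified the adjunction $\langle\Phi(X_{\infty},X_{1}),w\rangle=\langle\Phi(X_{0},X_{1}),\sigma(w)\rangle$ letter by letter (the four cases match), and the sign bookkeeping $(-1)^{\wt(\bk)}(-1)^{r}=(-1)^{\wt(\bk)-r}$ closes the argument. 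The two proofs rest on the same underlying duality of $\{X_{0}+X_{1},X_{1}\}$ against $\{e_{0},e_{1}-e_{0}\}$, but your adjoint formulation avoids the auxiliary decomposition $\Phi_{\cRS}=\psi+\tilde{\Phi}_{\cRS}$ and the separate vanishing check for $\psi$, at the cost of having to be careful (as you note) that the pairing convention makes $\sigma$ a homomorphism rather than an antihomomorphism; the paper's version has the mild advantage of exhibiting the coefficients $\zeta^{\star}_{\hcRS}(\bk;a,b)$ of $\Phi_{\cRS}(X_{\infty},X_{1})$ explicitly, which is conceptually closer to how $\Phi_{\cRS}$ is used in the duality proof that follows.
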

\begin{proof}
For non-negative integers $m$ and $n$, we denote by $\zeta_{\hcRS}(\bk;m,n)$ (resp. $\zeta^{\star}_{\hcRS}(\bk;m,n)$) the coefficient of $\zeta^{s,t}_{\hcRS}(\bk)$ (resp. $\zeta^{s,t,\star}_{\hcRS}(\bk)$) at $s^{m}t^{n}$.
By definition, there exists $\psi(X_{0},X_{1})\in \bbC\jump{X_{0}}+\bbC\jump{X_{0}}X_1\bbC\jump{X_{0}}$ such that
\[\Phi_{\cRS}(X_{0},X_{1}) - \psi(X_{0},X_{1})
=2\pi i\sum_{\bk\in\cI\setminus\{\emp\}}\sum_{\substack{0\le a\\ 0\le b}}(-1)^{\wt(\bk)+\dep(\bk)+a+b}\zeta_{\hcRS}(\bk;a,b)X_{0}^{a}X_{1}\omega(\bk)X_{1}X_{0}^{b}.\]
Let $\tilde{\Phi}_{\cRS}(X_{0},X_{1})$ denote the right-hand side. Then we have
\begin{align}
\tilde{\Phi}_{\cRS}(X_{\infty},X_{1})
&=\begin{multlined}[t]
2\pi i\sum_{\substack{\bk\in\cI\setminus\{\emp\}\\ \bk=(k_{1},\ldots,k_{r})}}\sum_{\substack{0\le a\\ 0\le b}}(-1)^{\wt(\bk)+\dep(\bk)}\zeta_{\hcRS}(\bk;a,b)\\
\cdot(X_{0}+X_{1})^{a}X_{1}(-X_{0}-X_{1})^{k_{1}-1}\cdots X_{1}(-X_{0}-X_{1})^{k_{r}-1}X_{1}(X_{0}+X_{1})^{b}
\end{multlined}\\
&=2\pi i\sum_{\bk\in\cI\setminus\{\emp\}}\sum_{\substack{0\le a\\ 0\le b}}\zeta_{\hcRS}(\bk;a,b)\sum_{\bk\preceq\bl}(X_{0}+X_{1})^{a}X_{1}\omega(\bl)X_{1}(X_{0}+X_{1})^{b}\\
&=2\pi i\sum_{\bl\in\cI\setminus\{\emp\}}\sum_{\substack{0\le a\\ 0\le b}}\left(\sum_{\bk\preceq\bl}\zeta_{\hcRS}(\bk;a,b)\right)(X_{0}+X_{1})^{a}X_{1}\omega(\bl)X_{1}(X_{0}+X_{1})^{b}.
\end{align}
Therefore we obtain
\[
\Phi_{\cRS}(X_{\infty},X_{1})-\psi(X_{\infty},X_{1})
=2\pi i\sum_{\bk\in\cI\setminus\{\emp\}}\sum_{\substack{0\le a\\ 0\le b}}\zeta^{\star}_{\hcRS}(\bk;a,b)(X_{0}+X_{1})^{a}X_{1}\omega(\bk)X_{1}(X_{0}+X_{1})^{b}.
\]
Since $\langle X_{0}+X_{1},e_{0}\rangle=1$, $\langle X_{0}+X_{1},e_{1}-e_{0}\rangle=0$, $\langle X_{1},e_{0}\rangle=0$ and $\langle X_1,e_{1}-e_{0}\rangle=1$, we have
\[
\langle (X_{0}+X_{1})^{a}X_{1}\omega(\bk')X_{1}(X_{0}+X_{1})^{b}, ~\frac{1}{1-e_{0}s}(e_{1}-e_{0})\omega(\bk)(e_{1}-e_{0})\frac{1}{1-e_{0}t} \rangle = s^{a}t^{b}\delta_{\bk',\bk}
\]
and
\[
\langle \psi(X_{\infty},X_{1}), ~\frac{1}{1-e_{0}s}(e_{1}-e_{0})\omega(\bk)(e_{1}-e_{0})\frac{1}{1-e_{0}t}\rangle = 0.
\]
Thus
\[
\frac{1}{2\pi i}\left\langle\Phi_{\cRS}(X_{\infty},X_{1}), ~\frac{1}{1-e_{0}s}(e_{1}-e_{0})\omega(\bk)(e_{1}-e_{0})\frac{1}{1-e_{0}t}\right\rangle
= \sum_{\substack{0\le a\\ 0\le b}}\zeta_{\hcRS}(\bk;a,b)s^{a}t^{b} = \zeta^{s,t}_{\hcRS}(\bk).\qedhere
\]
\end{proof}
When we express a non-empty index $\bk=(k_{1},\ldots,k_{r})$ as
\[\bk=(\underbrace{1+\cdots+1}_{k_{1}},\ldots,\underbrace{1+\cdots+1}_{k_{r}}),\]
we denote by $\bk^{\vee}$ the index obtained by interchanging commas `$,$' and pluses `$+$'.
\begin{Thm}\label{refined_stadic_duality}
For a non-empty index $\bk$, we have
\[\sum_{m,n=0}^{\infty} \zeta^{s,t,\star}_{\hcRS}(\{1\}^m,\bk,\{1\}^n)s^mt^n=-\sum_{m,n=0}^{\infty} \overline{\zeta^{s,t,\star}_{\hcRS}(\{1\}^m,\bk^{\vee},\{1\}^n)}s^mt^n.\]
\end{Thm}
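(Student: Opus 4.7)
The plan is to express both sides of the duality as pairings with $\Phi_{\cRS}$ via Proposition~\ref{stadic_smzsv_associator}, use Proposition~\ref{duality_associator} to absorb the complex conjugation on the right-hand side into a swap of arguments, and then match the two resulting pairings via a substitution argument.

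First, since $\omega((\{1\}^m,\bk,\{1\}^n))=e_1^m\omega(\bk)e_1^n$, summing the formula of Proposition~\ref{stadic_smzsv_associator} over $m,n$ with weights $s^m t^n$ yields
\[
\sum_{m,n\ge 0}\zeta^{s,t,\star}_{\hcRS}(\{1\}^m,\bk,\{1\}^n)s^m t^n = \frac{1}{2\pi i}\left\langle\Phi_{\cRS}(X_\infty,X_1),\,\frac{1}{1-e_0 s}(e_1-e_0)\frac{1}{1-e_1 s}\omega(\bk)\frac{1}{1-e_1 t}(e_1-e_0)\frac{1}{1-e_0 t}\right\rangle.
\]
I would then simplify the element inside the pairing via the telescoping identity
\[
\frac{1}{1-e_0 s}(e_1-e_0)\frac{1}{1-e_1 s}=\frac{1}{s}\left(\frac{1}{1-e_1 s}-\frac{1}{1-e_0 s}\right),
\]
immediate from $(1-e_0 s)-(1-e_1 s)=(e_1-e_0)s$, and its analogue in $t$. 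Setting $Q(\bk;s,t)\coloneqq\bigl(\tfrac{1}{1-e_1 s}-\tfrac{1}{1-e_0 s}\bigr)\omega(\bk)\bigl(\tfrac{1}{1-e_1 t}-\tfrac{1}{1-e_0 t}\bigr)$, the left-hand side becomes $\tfrac{1}{2\pi i\,st}\langle\Phi_{\cRS}(X_\infty,X_1),Q(\bk;s,t)\rangle$. Applying the same computation to $\bk^\vee$, then conjugating coefficient-wise in $s,t$ (which flips $2\pi i\mapsto-2\pi i$) and invoking Proposition~\ref{duality_associator}, the right-hand side becomes $\tfrac{1}{2\pi i\,st}\langle\Phi_{\cRS}(X_\infty,X_0),Q(\bk^\vee;s,t)\rangle$.

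It thus suffices to show $\langle\Phi_{\cRS}(X_\infty,X_1),Q(\bk;s,t)\rangle=\langle\Phi_{\cRS}(X_\infty,X_0),Q(\bk^\vee;s,t)\rangle$. The two associators are obtained from $\Phi_{\cRS}(X_0,X_1)$ via the linear substitutions $\sigma\colon(X_0,X_1)\mapsto(X_\infty,X_1)$ and $\tau\colon(X_0,X_1)\mapsto(X_\infty,X_0)$, so I would introduce the transpose $\bbQ$-algebra homomorphisms $\sigma^{*},\tau^{*}\colon\fH\to\fH$ characterized by $\langle\sigma(\Phi),w\rangle=\langle\Phi,\sigma^{*}(w)\rangle$ and similarly for $\tau$. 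A direct computation gives $\sigma^{*}(e_0)=-e_0,\ \sigma^{*}(e_1)=e_1-e_0$ and $\tau^{*}(e_0)=e_1-e_0,\ \tau^{*}(e_1)=-e_0$; in particular $\tau^{*}=\sigma^{*}\circ\pi$, where $\pi$ is the involution of $\fH$ exchanging $e_0\leftrightarrow e_1$. The crucial combinatorial observation is that index duality is exactly this letter swap: from $\omega(\bk)=e_0^{k_1-1}e_1\cdots e_1 e_0^{k_r-1}$ and the ``swap commas and pluses'' definition of $\bk^\vee$, one reads off $\pi(\omega(\bk))=\omega(\bk^\vee)$. Consequently $\pi(Q(\bk^\vee;s,t))=Q(\bk;s,t)$ (the signs coming from the two bracketed factors cancel), whence $\tau^{*}(Q(\bk^\vee;s,t))=\sigma^{*}(Q(\bk;s,t))$, yielding the required equality of pairings.

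The main obstacle is the bookkeeping needed to match the substitution duals $\sigma^{*},\tau^{*}$ with the combinatorial duality $\bk\mapsto\bk^\vee$; once the identification $\pi(\omega(\bk))=\omega(\bk^\vee)$ is recognized, the theorem follows cleanly from the telescoping identity and Proposition~\ref{duality_associator}.
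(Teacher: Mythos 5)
Your proof is correct and takes essentially the same route as the paper: both reduce the identity to pairing $\Phi_{\cRS}(X_{\infty},X_{1})$ against $\frac{1}{1-e_{0}s}(e_{1}-e_{0})\frac{1}{1-e_{1}s}\omega(\bk)\frac{1}{1-e_{1}t}(e_{1}-e_{0})\frac{1}{1-e_{0}t}$, use Proposition~\ref{duality_associator} together with the $e_{0}\leftrightarrow e_{1}$ involution (whose action realizes $\bk\mapsto\bk^{\vee}$ on $\omega(\bk)$, with the two boundary-factor signs cancelling via the telescoping identity), and extract the overall minus sign from $\overline{2\pi i}=-2\pi i$. Your explicit introduction of the transpose maps $\sigma^{*},\tau^{*}$ is just a more verbose derivation of the adjoint identity $\langle\Phi_{\cRS}(X_{\infty},X_{1}),w\rangle=\langle\Phi_{\cRS}(X_{\infty},X_{0}),\pi(w)\rangle$ that the paper states directly.
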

\begin{proof}
Let $\tau$ be the automorphism of $\bbQ\langle e_{0},e_{1}\rangle$ interchanging $e_{0}$ and $e_{1}$ and extend it to $\bbQ\langle e_{0},e_{1}\rangle\jump{s,t}$ by putting $\tau(s)=s$ and $\tau(t)=t$. Then Proposition \ref{duality_associator} implies that
\[\langle\Phi_{\cRS}(X_{\infty},X_{1}),~w\rangle=\langle\Phi_{\cRS}(X_{\infty},X_{0}),~\tau(w)\rangle=\overline{\langle\Phi_{\cRS}(X_{\infty},X_{1}),~\tau(w)\rangle}\]
for $w\in\bbQ\langle e_{0},e_{1}\rangle\jump{s,t}$. This equality and Proposition \ref{stadic_smzsv_associator} shows that
\begin{align}
&\sum_{m,n=0}^{\infty} \zeta^{s,t,\star}_{\hcRS}(\{1\}^m,\bk,\{1\}^n)s^mt^n\\
&=\frac{1}{2\pi i}\left\langle\Phi_{\cRS}(X_{\infty},X_{1}),~\frac{1}{1-e_{0}s}(e_{1}-e_{0})\frac{1}{1-e_{1}s}\omega(\bk)\frac{1}{1-e_{1}t}(e_{1}-e_{0})\frac{1}{1-e_{0}t}\right\rangle\\
&=\frac{1}{2\pi i}\overline{\left\langle\Phi_{\cRS}(X_{\infty},X_{1}),~\tau\left(\frac{1}{1-e_{0}s}(e_{1}-e_{0})\frac{1}{1-e_{1}s}\omega(\bk)\frac{1}{1-e_{1}t}(e_{1}-e_{0})\frac{1}{1-e_{0}t}\right)\right\rangle}
\end{align}
for a non-empty index $\bk$. By the definition of $\tau$, we have
\begin{multline}
\tau\left(\frac{1}{1-e_{0}s}(e_{1}-e_{0})\frac{1}{1-e_{1}s}\omega(\bk)\frac{1}{1-e_{1}t}(e_{1}-e_{0})\frac{1}{1-e_{0}t}\right)\\
=\frac{1}{1-e_{0}s}(e_{1}-e_{0})\frac{1}{1-e_{1}s}\omega(\bk^{\vee})\frac{1}{1-e_{1}t}(e_{1}-e_{0})\frac{1}{1-e_{0}t}
\end{multline}
since $\frac{1}{1-e_{0}a}(e_{1}-e_{0})\frac{1}{1-e_{1}a}=\frac{1}{a}(\frac{1}{1-e_1a}-\frac{1}{1-e_0a})$ is multiplied by $-1$ by the action of $\tau$.
Therefore we obtain
\begin{align}
&\frac{1}{2\pi i}\overline{\left\langle\Phi_{\cRS}(X_{\infty},X_{1}),~\tau\left(\frac{1}{1-e_{0}s}(e_{1}-e_{0})\frac{1}{1-e_{1}s}\omega(\bk)\frac{1}{1-e_{1}t}(e_{1}-e_{0})\frac{1}{1-e_{0}t}\right)\right\rangle}\\
&=-\overline{\frac{1}{2\pi i}\left\langle\Phi_{\cRS}(X_{\infty},X_{1}),~\frac{1}{1-e_{0}s}(e_{1}-e_{0})\frac{1}{1-e_{1}s}\omega(\bk^{\vee})\frac{1}{1-e_{1}t}(e_{1}-e_{0})\frac{1}{1-e_{0}t}\right\rangle}\\
&=-\sum_{m,n=0}^{\infty}\overline{\zeta^{s,t,\star}_{\hcRS}(\{1\}^{m},\bk^{\vee},\{1\}^{n})}s^{m}t^{n}.\qedhere
\end{align}
\end{proof}
Taking modulo $2\pi i$ in Theorem \ref{refined_stadic_duality}, by Proposition \ref{lifting_property}, we get Theorem \ref{stadic_duality}. Moreover, we recover the $t$-adic duality obtained by Takeyama--Tasaka \cite[Corollary 5.5]{tt23} by putting $s=0$ in Theorem \ref{stadic_duality}.
\section{$(s,t)$-adic cyclic sum formula}\label{sec:cyclic_sum_formula}
\subsection{Proof of the $(s,t)$-adic star cyclic sum formula}\label{subset:cyclic_sum_formula_star}
For an index $\bk$, put
\[\zeta^{\star,\bullet}(\bk;T)\coloneqq\sum_{\bl\preceq\bk}\zeta^{\bullet}(\bl;T),\]

\[\zeta^{t,\star,\bullet}_{\shift}(\bk;T)\coloneqq\sum_{\be\in\bbZ_{\ge 0}^{\dep(\bk)}}b\left(\bk\atop\be\right)\zeta^{\star,\bullet}(\bk\oplus\be;T)(-t)^{\wt(\be)}\ \ \ \left(=\sum_{\bl\preceq\bk}\zeta^{t,\bullet}_{\shift}(\bl;T)\right),\]
and
\[
\zeta^{s,t,\star,\bullet}_{\hcS}(\bk;T_{1},T_{2}) \coloneqq \sum_{i=0}^{\dep(\bk)}(-1)^{\wt(\bk^{[i]})}\zeta^{s,\star,\bullet}_{\shift}(\bk_{[i]};T_{1})\zeta^{-t,\star,\bullet}_{\shift}(\overleftarrow{\bk^{[i]}};T_{2})\ \ \ \left(=\sum_{\bl\preceq\bk}\zeta^{s,t,\bullet}_{\hcS}(\bl;T_{1},T_{2})\right).
\]
We easily see that $\zeta^{s,t,\star}_{\hcS}(\bk)=\zeta^{s,t,\star,\bullet}_{\hcS}(\bk;0,0)$ holds in $(\cZ/\zeta(2)\cZ)\jump{s,t}$, independently of $\bullet$, according to Proposition \ref{stadic_independence}.
This subsection is devoted to the proof of the following formula:
\begin{Thm}\label{general_stadic_csf}
For an index $\bk=(k_{1},\ldots,k_{r})$ whose weight $k$ is greater than $r$, we have
\begin{align}
&\sum_{i=1}^{r}\sum_{j=0}^{k_i-1}\zeta^{s,t,\star,\bullet}_{\hcS}(j+1,\bk^{[i]},\bk_{[i-1]},k_{i}-j;T_{1},T_{2})\\
&=\sum_{i=1}^{r}\sum_{j=0}^{\infty}\left(\zeta^{s,t,\star,\bullet}_{\hcS}(j+1,\bk^{[i]},\bk_{[i]};T_{1},T_{2})t^{j}+\zeta^{s,t,\star,\bullet}_{\hcS}(\bk^{[i]},\bk_{[i]},j+1;T_{1},T_{2})s^j\right)+k\zeta^{s,t,\star,\bullet}_{\hcS}(k+1;T_{1},T_{2}).
\end{align}
\end{Thm}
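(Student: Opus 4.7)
The plan is to prove Theorem \ref{general_stadic_csf} in two stages: first establish a one-variable shifted star cyclic sum formula for the shifted values $\zeta^{t,\star,\bullet}_{\shift}(\bk;T)$, then lift it to the $(s,t)$-adic setting by expanding both sides via the defining decomposition
\[\zeta^{s,t,\star,\bullet}_{\hcS}(\bk;T_{1},T_{2})=\sum_{i=0}^{\dep(\bk)}(-1)^{\wt(\bk^{[i]})}\zeta^{s,\star,\bullet}_{\shift}(\bk_{[i]};T_{1})\zeta^{-t,\star,\bullet}_{\shift}(\overleftarrow{\bk^{[i]}};T_{2}).\]

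The first stage is to prove a one-variable shifted star CSF of the schematic form
\[\sum_{i=1}^{r}\sum_{j=0}^{k_{i}-1}\zeta^{t,\star,\bullet}_{\shift}(j+1,\bk^{[i]},\bk_{[i-1]},k_{i}-j;T)=k\,\zeta^{t,\star,\bullet}_{\shift}(k+1;T)+\sum_{i=1}^{r}\sum_{j\ge 0}\zeta^{t,\star,\bullet}_{\shift}(j+1,\bk^{[i]},\bk_{[i]};T)\,t^{j},\]
together with its mirror (in which $j+1$ is appended at the right end, with the appropriate shift). This should follow from the classical star cyclic sum formula of Ohno--Wakabayashi by expanding each shifted star value as $\sum_{\bn}b\binom{\bk}{\bn}\zeta^{\star,\bullet}(\bk\oplus\bn;T)(-t)^{\wt(\bn)}$, comparing coefficients of $t^{m}$, and applying the classical CSF at every higher weight $k+m$; the combinatorial identities among the binomial weights should reconstitute the shifted right-hand side.

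The second stage is the assembly. Expanding both sides of Theorem \ref{general_stadic_csf} via the definition, the LHS becomes a triple sum indexed by $(i,j,p)$, where $p\in\{0,\ldots,r+1\}$ is the split point of the extended index $(j+1,\bk^{[i]},\bk_{[i-1]},k_{i}-j)$ into its left factor (under $\zeta^{s,\star,\bullet}_{\shift}(\cdot;T_{1})$) and its right factor (under $\zeta^{-t,\star,\bullet}_{\shift}(\overleftarrow{\cdot};T_{2})$). I would reorder this sum by fixing $p$. The boundary cases $p=0$ and $p=r+1$ give pure cyclic sums on only one side; applying the shifted star CSF from the first stage to each reproduces, respectively, the $t^{j}$-weighted and $s^{j}$-weighted terms of the RHS of Theorem \ref{general_stadic_csf}, together with the $k\,\zeta^{s,t,\star,\bullet}_{\hcS}(k+1;T_{1},T_{2})$ term (which collects a contribution from each side). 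The interior cases $0<p<r+1$, in which $j+1$ and $k_{i}-j$ lie on opposite sides of the split, should cancel out entirely.

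\textbf{Main obstacle.} The crux of the proof is precisely the vanishing of these interior contributions. I expect them to telescope via an antipode-style relation analogous to Corollary \ref{antipode_relation} in its star form: with $(i,j)$ held fixed, the sum over $p$ should reduce to a Kronecker delta, killing every $0<p<r+1$ term. The technical bulk is the bookkeeping: tracking the signs $(-1)^{\wt(\bk^{[i]})}$, handling the opposite shift parameters $s$ and $-t$, dealing with the reversal on the right factor, translating between star and strict indexing in the two stages, and verifying that the coefficient $k$ in front of $\zeta^{s,t,\star,\bullet}_{\hcS}(k+1;T_{1},T_{2})$ really does assemble correctly from the two boundary contributions.
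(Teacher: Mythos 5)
Your first stage is exactly the paper's Proposition \ref{shifted_csf} (the cyclic sum formula for shifted MZVs, deduced from the Ohno--Wakabayashi star CSF by expanding in $t$ and manipulating binomial coefficients), and your decomposition of the left-hand side by the split point $p$ of the adjoint sum is also the paper's starting point (equation \eqref{eq:decomposition_stadic_csf}). But the key claim of your second stage is wrong: the interior contributions ($0<p<r+1$, where $j+1$ and $k_i-j$ land on opposite sides of the split) do \emph{not} vanish. They cannot: the boundary cases $p=0$ and $p=r+1$ produce only the one-variable shifted values $\zeta^{s,\star,\bullet}_{\shift}(j+1,\bl,u-j;T_1)$ and $\zeta^{-t,\star,\bullet}_{\shift}(u-j,\overleftarrow{\bl},j+1;T_2)$, and applying the shifted CSF to these yields $k\zeta^{s,t,\star,\bullet}_{\hcS}(k+1)$ plus only the \emph{boundary splits} of the adjoint values on the right-hand side, namely $\zeta^{s,\star,\bullet}_{\shift}(\bl,j+1;T_1)s^j$ and $(-1)^{k+j+1}\zeta^{-t,\star,\bullet}_{\shift}(\overleftarrow{\bl},j+1;T_2)t^j$. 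The full targets $\zeta^{s,t,\star,\bullet}_{\hcS}(j+1,\bl)t^j$ and $\zeta^{s,t,\star,\bullet}_{\hcS}(\bl,j+1)s^j$ contain all their own interior splits, and those must come from somewhere: precisely from the interior terms you propose to kill. If they vanished, the theorem would assert that each adjoint star value equals its boundary piece, which is false.

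Concretely, the paper names the interior contribution $F^{\bullet}(\bk)$ and must prove the nontrivial identity \eqref{eq:definition_of_F}, i.e.\ that $F^{\bullet}(\bk)$ equals the sum of $\bigl(\zeta^{s,t,\star,\bullet}_{\hcS}(j+1,\bl)-(-1)^{k+j+1}\zeta^{-t,\star,\bullet}_{\shift}(\overleftarrow{\bl},j+1)\bigr)t^j$ and $\bigl(\zeta^{s,t,\star,\bullet}_{\hcS}(\bl,j+1)-\zeta^{s,\star,\bullet}_{\shift}(\bl,j+1)\bigr)s^j$. This is the bulk of the proof and is not a bookkeeping or antipode-telescoping matter: after reducing to $\bullet=\ast$ via the regularization theorem, one applies the antipode relation (Corollary \ref{antipode_relation}) to peel off star values, converts the resulting $\ast$-regularized factors to $\sh$-regularized ones via \eqref{eq:explicit_reg_thm} (producing correction terms $f_2,f_3,f_4$ handled by further antipode arguments), and then evaluates the genuinely cyclic inner sum $g(\bl)$ by a telescoping identity for the \emph{shuffle} product (Lemma \ref{shuffle_lemma}), which is where the new $t^j$- and $s^j$-tails actually arise. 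Your proposal is missing this entire mechanism, and the step it replaces it with (vanishing of the interior terms) is false.
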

In this section, we often use the following notation of the cyclic equivalent classes: For an index $\bk$ with depth $r$, write
\[\alpha(\bk)\coloneqq\{\!|~(\bk^{[i]},\bk_{[i]})\mid 1\le i\le r~|\!\}\]
as a multiset. A sum over a multiset counts the multiplicity. For example, the statement of Theorem \ref{general_stadic_csf} is written as
\begin{align}
&\sum_{(u,\bl)\in\alpha(\bk)}\sum_{j=0}^{u-1}\zeta^{s,t,\star,\bullet}_{\hcS}(j+1,\bl,u-j;T_{1},T_{2})\\
&=\sum_{\bl\in\alpha(\bk)}\sum_{j=0}^{\infty}\left(\zeta^{s,t,\star,\bullet}_{\hcS}(j+1,\bl;T_{1},T_{2})t^{j}+\zeta^{s,t,\star,\bullet}_{\hcS}(\bl,j+1;T_{1},T_{2})s^{j}\right)+k\zeta^{s,t,\star,\bullet}_{\hcS}(k+1;T_{1},T_{2}),
\end{align}
where $\bl$ shown in bold font (resp.~$u$ in fine font) denotes an index (resp.~a positive integer, single index) and we use such notations likewise hereafter.
The left-hand side is decomposed as
\begin{equation}\label{eq:decomposition_stadic_csf}
\begin{split}
&\sum_{(u,\bl)\in\alpha(\bk)}\sum_{j=0}^{u-1}\zeta^{s,t,\star,\bullet}_{\hcS}(j+1,\bl,u-j;T_{1},T_{2})\\
&=\sum_{(u,\bl)\in\alpha(\bk)}\sum_{j=0}^{u-1}\left(\zeta^{s,\star,\bullet}_{\shift}(j+1,\bl,u-j;T_{1})+(-1)^{k+1}\zeta^{-t,\star,\bullet}_{\shift}(u-j,\overleftarrow{\bl},j+1;T_{2})\right)\\
&\qquad+\sum_{(u,\bl)\in\alpha(\bk)}\sum_{j=0}^{u-1}\sum_{(\bm,\bn)=\bl}(-1)^{\wt(\bn)+u-j}\zeta^{s,\star,\bullet}_{\shift}(j+1,\bm;T_{1})\zeta^{-t,\star,\bullet}_{\shift}(u-j,\overleftarrow{\bn};T_{2}).
\end{split}
\end{equation}
To compute the first sum, we prove the cyclic sum formula for shifted multiple zeta values.
\begin{Thm}[{\cite[Theorem 1]{ow06}}]\label{classical_csf}
For an index $\bk$ with weight $k$ greater than $\dep(\bk)$, we have
\[\sum_{(u,\bl)\in\alpha(\bk)}\sum_{j=0}^{u-2}\zeta^{\star}(j+1,\bl,u-j)=k\zeta^{\star}(
k+1),\]
where $\zeta^{\star}(\bk)\coloneqq\sum_{\bl\preceq\bk}\zeta(\bl)$ is the usual multiple zeta star value and the inner sum on the left-hand side is regarded as $0$ when $u=1$.
\end{Thm}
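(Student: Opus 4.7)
The plan is to prove the identity by direct manipulation of the nested-series definitions, following the spirit of the proof in Ohno--Wakabayashi \cite{ow06}. First, I would expand each multiple zeta star value on the left-hand side via its iterated-sum representation
\[
\zeta^{\star}(j+1, l_1, \ldots, l_{r-1}, u-j) = \sum_{0 < n_0 \le n_1 \le \cdots \le n_{r-1} \le n_r} \frac{1}{n_0^{j+1}\,n_1^{l_1}\cdots n_{r-1}^{l_{r-1}}\,n_r^{u-j}},
\]
and interchange the outer sum over $j \in \{0, \ldots, u-2\}$ with the nested sum. The resulting inner sum over $j$ is a finite geometric-type sum that evaluates explicitly: when $n_0 \ne n_r$ it equals $\frac{1}{n_r - n_0}\bigl(\frac{1}{n_0^{u-1}} - \frac{1}{n_r^{u-1}}\bigr)$, and when $n_0 = n_r$ it equals $(u-1)/n_0^{u+1}$. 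This also handles the degenerate case $u = 1$ correctly, giving $0$.

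The second step is to sum these evaluated expressions over all cyclic rotations $(u, \bl) \in \alpha(\bk)$ and split the result according to whether $n_0 = n_r$ (diagonal) or $n_0 < n_r$ (off-diagonal). The diagonal contribution simplifies immediately, since the equality $n_0 = n_r$ forces all intermediate $n_i$'s to equal a common value $n$, and summing $\sum_{i=1}^{r}(k_i - 1)/n^{k+1} = (k-r)/n^{k+1}$ over $n \ge 1$ yields $(k-r)\,\zeta(k+1)$. The off-diagonal contribution, after the partial-fraction evaluation above, becomes a cyclic sum of telescoping-type expressions; a careful rearrangement across consecutive rotations shows that only boundary terms survive, and they assemble into $r\,\zeta(k+1)$. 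Adding the two contributions gives $k\,\zeta(k+1) = k\,\zeta^{\star}(k+1)$, which is the right-hand side.

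The main obstacle is the combinatorial bookkeeping in the off-diagonal telescoping: the middle chain $n_0 \le n_1 \le \cdots \le n_{r-1} \le n_r$ and its weights $l_1, \ldots, l_{r-1}$ vary with the cyclic rotation, so the cancellation is not pointwise but has to be organized carefully across all rotations at once. The cleanest way to carry out this matching is via generating functions, as done in \cite{ow06}; an alternative is an induction on the depth $r$ that peels off one cyclic position at a time using a single partial-fraction identity per step.
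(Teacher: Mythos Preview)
The paper does not give its own proof of this theorem; it is quoted verbatim from Ohno--Wakabayashi \cite{ow06} and used as a known input in the proof of Proposition~\ref{shifted_csf}. Your sketch is essentially the Ohno--Wakabayashi argument: the partial-fraction evaluation of the inner $j$-sum and the diagonal computation $(k-r)\zeta(k+1)$ are both correct, and the off-diagonal telescoping across cyclic rotations, which you rightly identify as the combinatorial crux, is exactly what is carried out in \cite{ow06}. So there is nothing to compare against here, and your plan is sound.
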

\begin{Prop}[Cyclic sum formula for shifted MZVs]\label{shifted_csf}
For an index $\bk$ with weight $k$ greater than depth $r$, we have
\[\sum_{(u,\bl)\in\alpha(\bk)}\sum_{j=0}^{u-1}\zeta^{t,\star,\bullet}_{\shift}(j+1,\bl,u-j;T)=\sum_{\bl\in\alpha(\bk)}\sum_{j=0}^{\infty}\zeta^{t,\star,\bullet}_{\shift}(\bl,j+1;T)t^{j}+k\zeta^{t,\star,\bullet}_{\shift}(k+1;T).\]
\end{Prop}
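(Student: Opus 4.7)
The plan is to reduce Proposition~\ref{shifted_csf} to Theorem~\ref{classical_csf} (the classical cyclic sum formula for $\zeta^{\star}$) applied to every shifted index $\bk\oplus\be$, and then to sum over $\be\in\bbZ_{\ge0}^{r}$ with weight $b\!\left({\bk\atop\be}\right)(-t)^{\wt(\be)}$ so that the classical identities accumulate into the power-series-valued $\zeta^{t,\star,\bullet}_{\shift}$. The extension of the inner range from $j\in[0,u-2]$ to $j\in[0,u-1]$ (compared with the classical CSF) produces an extra regularized term per rotation, which will be identified with the first sum on the right-hand side.

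First, I would expand each summand $\zeta^{t,\star,\bullet}_{\shift}(j+1,\bl,u-j;T)$ on the left-hand side via its definition. For a fixed cyclic rotation $(u,\bl)=(k_i,\bl_i)$, this gives a sum over shifts $(a,\bf,b)\in\bbZ_{\ge0}^{\dep(\bl)+2}$ of the three blocks $j+1$, $\bl$, $u-j$. Reparametrizing by $j':=j+a$ and $e_i:=a+b$ (so that the evaluated index becomes $(j'+1,\bl_i\oplus\bf,k_i+e_i-j')$) and collapsing the inner $a$-sum by Vandermonde's identity
\[
\sum_{a}\binom{j'}{a}\binom{k_i+e_i-j'-1}{e_i-a}=\binom{k_i+e_i-1}{e_i},
\]
the coefficient of every $\zeta^{\star,\bullet}(j'+1,\bl_i\oplus\bf,k_i+e_i-j';T)$ collapses to $\binom{k_i+e_i-1}{e_i}\,b\!\left({\bl_i\atop\bf}\right)$ uniformly for each $j'\in[0,k_i+e_i-1]$. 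Regarding $(e_i,\bf)=(e_i,\bf_i)$ as the cyclic rotation at position $i$ of a global shift $\be\in\bbZ_{\ge0}^r$ (so that $b\!\left({\bk\atop\be}\right)=\binom{k_i+e_i-1}{e_i}\,b\!\left({\bl_i\atop\bf_i}\right)$), the left-hand side of the proposition becomes
\[
\sum_{\be\in\bbZ_{\ge0}^{r}}b\!\left({\bk\atop\be}\right)(-t)^{\wt(\be)}\sum_{i=1}^{r}\sum_{j'=0}^{k_i+e_i-1}\zeta^{\star,\bullet}(j'+1,\bl_i\oplus\bf_i,k_i+e_i-j';T).
\]

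Next, I would split the inner $j'$-sum at the non-admissible endpoint $j'=k_i+e_i-1$ (whose index ends in $1$), separating it from the admissible range $[0,k_i+e_i-2]$. In the admissible range every $\zeta^{\star,\bullet}$ coincides with the usual $\zeta^{\star}$, and Theorem~\ref{classical_csf} applied to $\bk\oplus\be$ (which has depth $r$ and weight $k+\wt(\be)>r$) evaluates the double sum over $i$ and $j'\in[0,k_i+e_i-2]$ to $(k+\wt(\be))\,\zeta^{\star}(k+\wt(\be)+1)$. Summing over $\be$ and using $\sum_{\wt(\be)=e}b\!\left({\bk\atop\be}\right)=\binom{k+e-1}{e}$ together with $(k+e)\binom{k+e-1}{e}=k\binom{k+e}{e}$ recovers exactly $k\,\zeta^{t,\star,\bullet}_{\shift}(k+1;T)$, matching the last term on the right-hand side.

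The remaining non-admissible contributions $\zeta^{\star,\bullet}(k_i+e_i,\bl_i\oplus\bf_i,1;T)$ reassemble, under the identifications $\bl:=(k_i,\bl_i)\in\alpha(\bk)$ and $\be_\bl:=(e_i,\bf_i)$, into $\sum_{\bl\in\alpha(\bk)}\sum_{\be_\bl}b\!\left({\bl\atop\be_\bl}\right)(-t)^{\wt(\be_\bl)}\zeta^{\star,\bullet}(\bl\oplus\be_\bl,1;T)$. A short formal manipulation identifies this with the first sum on the right-hand side: expanding $\zeta^{t,\star,\bullet}_{\shift}(\bl,j+1;T)$ by definition and substituting $c:=j+b$ (where $b$ is the shift of the trailing $1$), the identity $\sum_{b=0}^{c}\binom{c}{b}(-1)^{b}=\delta_{c,0}$ collapses $\sum_{j\ge0}\zeta^{t,\star,\bullet}_{\shift}(\bl,j+1;T)\,t^{j}$ to precisely $\sum_{\be_\bl}b\!\left({\bl\atop\be_\bl}\right)(-t)^{\wt(\be_\bl)}\zeta^{\star,\bullet}(\bl\oplus\be_\bl,1;T)$. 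The main obstacle is the Vandermonde bookkeeping in the first step: one must check that the restrictions on $a$ coming from $0\le j\le k_i-1$ and $0\le a\le e_i$ lie exactly where the two binomials are supported, so that the effectively restricted sum equals the unrestricted Vandermonde sum and produces the uniform coefficient $\binom{k_i+e_i-1}{e_i}$ across the whole extended range $[0,k_i+e_i-1]$, including the non-admissible endpoint. Once this is verified, the rest of the proof is formal algebra together with a single application of Theorem~\ref{classical_csf}.
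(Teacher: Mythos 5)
Your proposal is correct and follows essentially the same route as the paper's proof: both expand the shifted star values by definition, use Chu--Vandermonde to absorb the two boundary shifts into a single global shift $\be$ of $\bk$, apply the classical cyclic sum formula (Theorem~\ref{classical_csf}) to each $\bk\oplus\be$ after isolating the non-admissible endpoint, and convert that endpoint contribution into the first sum on the right-hand side via the alternating binomial identity $\sum_{b}\binom{c}{b}(-1)^{b}=\delta_{c,0}$. The differences are only cosmetic (your parametrization by $(j',e_i)$ versus the paper's $(h_1,h_2,N)$, and the direction in which the final resummation is carried out).
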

\begin{proof}
The left-hand side is equal to
\begin{align}
&\sum_{(u,\bl)\in\alpha(\bk)}\sum_{\substack{j_{1},j_{2}\ge 0\\ j_{1}+j_{2}=u-1}}\zeta^{t,\star,\bullet}_{\shift}(j_{1}+1,\bl,j_{2}+1;T)\\
&=\begin{multlined}[t]
\sum_{(u,\bl)\in\alpha(\bk)}\sum_{\substack{j_{1},j_{2}\ge 0\\ j_{1}+j_{2}=u-1}}\sum_{\substack{\bn=(n_{1},\ldots,n_{r-1})\in\bbZ_{\ge 0}^{r-1}\\ m,n\ge 0}}b\left({\bl\atop\bn}\right)\binom{m+j_{1}}{m}\binom{n+j_{2}}{n}\\
\cdot\zeta^{\star,\bullet}(j_{1}+m+1,\bl\oplus\bn,j_{2}+n+1;T)(-t)^{\wt(\be)+m+n}
\end{multlined}\\
&=\begin{multlined}[t]
\sum_{(u,\bl)\in\alpha(\bk)}\sum_{\substack{\bn=(n_{1},\ldots,n_{r-1})\in\bbZ_{\ge 0}^{r-1}\\ N\ge 0}}\sum_{\substack{h_{1},h_{2}\ge 0\\ h_{1}+h_{2}=N+u-1}}\left(\sum_{\substack{m,n\ge 0\\ m+n=N}}\binom{h_{1}}{m}\binom{h_{2}}{n}\right)b\left({\bl\atop\bn}\right)\\
\cdot\zeta^{\star,\bullet}(h_{1}+1,\bl\oplus\bn,h_{2}+1;T)(-t)^{\wt(\be)+N}.
\end{multlined}
\end{align}
Here, the expression enclosed by the big parentheses coincides with $\binom{h_{1}+h_{2}}{N}$ by easy computation. Then we obtain
\begin{align}
&\sum_{(u,\bl)\in\alpha(\bk)}\sum_{\substack{j_{1},j_{2}\ge 0\\ j_{1}+j_{2}=u-1}}\zeta^{t,\star,\bullet}_{\shift}(j_{1}+1,\bl,j_{2}+1;T)\\
&=\sum_{(u,\bl)\in\alpha(\bk)}\sum_{n_{1},\ldots,n_{r-1},N\ge 0}b\left({\bl\atop{n_{1},\ldots,n_{r-1}}}\right)\sum_{\substack{h_{1},h_{2}\ge 0\\ h_{1}+h_{2}=N+u-1}}\binom{h_{1}+h_{2}}{N}\zeta^{\star,\bullet}(h_{1}+1,\bl\oplus\bn,h_{2}+1;T)(-t)^{\wt(\be)+N}\\
&=\sum_{\bn\in\bbZ_{\ge 0}^{r}}b\left({\bk\atop\bn}\right)(-t)^{\wt(\bn)}\sum_{(v,\bm)\in\alpha(\bk\oplus\bn)}\sum_{\substack{h_{1},h_{2}\ge 0\\ h_{1}+h_{2}=v-1}}\zeta^{\star,\bullet}(h_{1}+1,\bm,h_{2}+1;T).
\end{align}
The inner sum $\sum_{(v,\bm)}\sum_{h_{1},h_{2}}$ can be computed by the ordinary cyclic sum formula (Theorem \ref{classical_csf}) as
\[\sum_{(v,\bm)\in\alpha(\bk\oplus\bn)}\sum_{\substack{h_{1},h_{2}\ge 0\\ h_{1}+h_{2}=v-1}}\zeta^{\star,\bullet}(h_{1}+1,\bm,h_{2}+1;T)=\sum_{(v,\bm)\in\alpha(\bk\oplus\bn)}\zeta^{\star,\bullet}(v,\bm,1;T)+(k+\wt(\bn))\zeta(k+\wt(\bn)+1),\]
and therefore we have
\begin{align}
&\sum_{(u,\bl)\in\alpha(\bk)}\sum_{\substack{j_{1},j_{2}\ge 0\\ j_{1}+j_{2}=u-1}}\zeta^{t,\star,\bullet}_{\shift}(j_{1}+1,\bl,j_{2}+1;T)\\
&=\sum_{\bn\in\bbZ_{\ge 0}^{r}}b\left({\bk\atop\bn}\right)(-t)^{\wt(\bn)}\left(\sum_{\bl\in\alpha(\bk\oplus\bn)}\zeta^{\star,\bullet}(\bl,1;T)+(k+\wt(\bn))\zeta(k+\wt(\bn)+1)\right).
\end{align}
The second term on the right-hand side is $k\zeta_{\shift}^{t}(k+1)$ from a simple calculation of binomial coefficients, whereas the first term is
\begin{align}
&\sum_{\bn\in\bbZ_{\ge 0}^{r}}b\left({\bk\atop\bn}\right)\sum_{\bl\in\alpha(\bk\oplus\bn)}\zeta^{\star,\bullet}(\bl,1;T)(-t)^{\wt(\bn)}\\
&=\sum_{\bn\in\bbZ_{\ge 0}^{r}}b\left({\bk\atop\bn}\right)\sum_{\bl\in\alpha(\bk\oplus\bn)}\sum_{n=0}^{\infty}\left(\sum_{i=0}^{n}(-1)^{n-i}\binom{n}{i}\right)\zeta^{\star,\bullet}(\bl,n+1;T)(-t)^{n+\wt(\bn)}\\
&=\sum_{\bn\in\bbZ_{\ge 0}^{r}}b\left({\bk\atop\bn}\right)\sum_{\bl\in\alpha(\bk\oplus\bn)}\sum_{j,n=0}^{\infty}(-1)^{j}\binom{j+n}{n}\zeta^{\star,\bullet}(\bl,j+n+1;T)(-t)^{j+n+\wt(\bn)}\\
&=\sum_{\bl\in\alpha(\bk)}\sum_{j=0}^{\infty}\zeta^{t,\star,\bullet}_{\shift}(\bl,j+1;T)t^{j}.
\end{align}
This finishes the proof.
\end{proof}
By this proposition, the first sum of \eqref{eq:decomposition_stadic_csf} can be expressed as
\begin{align}
&\sum_{(u,\bl)\in\alpha(\bk)}\sum_{j=0}^{u-1}\left(\zeta^{s,\star,\bullet}_{\shift}(j+1,\bl,u-j;T_{1})+(-1)^{k+1}\zeta^{-t,\star,\bullet}_{\shift}(u-j,\overleftarrow{\bl},j+1;T_{2})\right)\\
&=k\zeta^{s,\star,\bullet}_{\shift}(k+1)+(-1)^{k+1}k\zeta^{t,\star,\bullet}_{\shift}(k+1)+\sum_{\bl\in\alpha(\bk)}\sum_{j=0}^{\infty}(\zeta^{s,\star,\bullet}_{\shift}(\bl,j+1;T_{1})s^{j}+(-1)^{k+j+1}\zeta^{-t,\star,\bullet}_{\shift}(\overleftarrow{\bl},j+1;T_{2})t^{j})\\
&=k\zeta^{s,t,\star,\bullet}_{\hcS}(k+1;T_{1},T_{2})+\sum_{\bl\in\alpha(\bk)}\sum_{j=0}^{\infty}(\zeta^{s,\star,\bullet}_{\shift}(\bl,j+1;T_{1})s^{j}+(-1)^{k+j+1}\zeta^{-t,\star,\bullet}_{\shift}(\overleftarrow{\bl},j+1;T_{2})t^{j}).
\end{align}
We denote by $F^{\bullet}(\bk)$ the remaining term of \eqref{eq:decomposition_stadic_csf}. Then it is sufficient to prove that
\begin{multline}\label{eq:definition_of_F}
F^{\bullet}(\bk)=\sum_{\bl\in\alpha(\bk)}\sum_{j=0}^{\infty}(\zeta^{s,t,\star,\bullet}_{\hcS}(j+1,\bl;T_{1},T_{2})-(-1)^{k+j+1}\zeta^{-t,\star,\bullet}_{\shift}(\overleftarrow{\bl},j+1;T_{2}))t^{j}\\
+\sum_{\bl\in\alpha(\bk)}\sum_{j=0}^{\infty}\left(\zeta^{s,t,\star,\bullet}_{\hcS}(\bl,j+1;T_{1},T_{2})-\zeta^{s,\star,\bullet}_{\shift}(\bl,j+1;T_{1})\right)s^{j}.
\end{multline}
Moreover, from the regularization theorem (Theorem \ref{ikz_reg_thm}) for $T_{1}$ and $T_{2}$ in the above equality, it suffices to show the case where $\bullet=\ast$. On the other hand, the definition of $F^{\ast}(\bk)$ yields the following:
\begin{align}
F^{\ast}(\bk)
&=\sum_{(u,\bl)\in\alpha(\bk)}\sum_{j=0}^{u-1}\sum_{(\bm,\bn)=\bl}(-1)^{\wt(\bn)+u-j}\zeta^{s,\star,\ast}_{\shift}(j+1,\bm;T_{1})\zeta^{-t,\star,\ast}_{\shift}(u-j,\overleftarrow{\bn};T_{2})\\
&=\sum_{\bl\in\alpha(\bk)}\sum_{(\bn,u,\bm)=\bl}\sum_{j=0}^{u-1}(-1)^{\wt(\bn)+u-j}\zeta^{s,\star,\ast}_{\shift}(j+1,\bm;T_{1})\zeta^{-t,\star,\ast}_{\shift}(u-j,\overleftarrow{\bn};T_{2}).
\end{align}
We denote by $f(\bl)$ the summand of $\sum_{\bl\in\alpha(\bk)}$; that is,
\[f(\bl)\coloneqq\sum_{(\bn,u,\bm)=\bl}\sum_{j=0}^{u-1}(-1)^{\wt(\bn)+u-j}\zeta^{s,\star,\ast}_{\shift}(j+1,\bm;T_{1})\zeta^{-t,\star,\ast}_{\shift}(u-j,\overleftarrow{\bn};T_{2}).\]
Furthermore, for an index $\bk=(k_{1},\ldots,k_{r})$, we write
\[R(\bk;T)\coloneqq\left(\prod_{i=1}^{r}\delta_{k_{i},1}\right)\sum_{\substack{a+b=r\\a,b\geq 0}}\frac{\rho(T^{b})\mid_{T=0}(-T)^a}{a!b!}. \]
Then the regularization theorem shows
\begin{equation}\label{eq:explicit_reg_thm}
\zeta^{t,\sh}_{\shift}(\bk;0)=\sum_{(\bk',\bk'')=\bk}\zeta^{t,\ast}_{\shift}(\bk';T)R(\bk'';T)
\end{equation}
since we have
\[
\zeta^{t,\sh}_{\shift}(\bk;0) = \sum_{ \substack{\bk = (\bk',\{1\}^b) \\ \bk'\in\cI,~b\geq 0} }\zeta^{t,\ast}_{\shift}(\bk';0)\frac{\rho(T^b)\mid_{T=0}}{b!}=  \sum_{\substack{\bk = (\bk',\{1\}^{a+b})\\ \bk'\in\cI,~a,b\geq 0 }   }\zeta^{t,\ast}_{\shift}(\bk';T) \frac{\rho(T^b)\mid_{T=0}(-T)^{a}}{a!b!}.
\]

\begin{Lem}\label{f_decomposition}
We have
\[f(\bl)=f_{1}(\bl)+f_{2}(\bl)+f_{3}(\bl)+f_{4}(\bl)\]
for any index $\bl$ with $\wt(\bl)>\dep(\bl)$, where we write
\begin{align}
f^{s,t}_{1}(\bl;T_{1},T_{2})&\coloneqq
\begin{multlined}[t]
\sum_{(\bn'',\bn',u,\bm',\bm'')=\bl}\sum_{j=0}^{u-1}(-1)^{\wt(\bn'',\bn')+\dep(\bn',\bm')+u-j}\\
\cdot\zeta^{s,\star,\ast}_{\shift}(\bm'';T_{1})\zeta^{-t,\star,\ast}_{\shift}(\overleftarrow{\bn''};T_{2})\zeta^{s,\sh}_{\shift}(\overleftarrow{\bm'},j+1;0)\zeta^{-t,\sh}_{\shift}(\bn',u-j;0),
\end{multlined}\\
f^{s,t}_{2}(\bl;T_{1},T_{2})&\coloneqq
\begin{multlined}[t]
\sum_{(\bn'',\bn',u,\bm^{(1)},\bm^{(2)},\bm'')=\bl}(-1)^{\wt(\bn'',\bn',\bm^{(1)})+\dep(\bn',\bm^{(2)})+u+1}\\
\cdot\zeta^{s,\star,\ast}_{\shift}(\bm'';T_{1})\zeta^{-t,\star,\ast}_{\shift}(\overleftarrow{\bn''};T_{2})\zeta^{s,\ast}_{\shift}(\overleftarrow{\bm^{(2)}};T_{1})\zeta^{-t,\ast}_{\shift}(\bn',u;T_{2})R(\bm^{(1)},1;T_{1}),
\end{multlined}\\
f^{s,t}_{3}(\bl;T_{1},T_{2})&\coloneqq
\begin{multlined}[t]
\sum_{(\bn'',\bn^{(2)},\bn^{(1)},u,\bm',\bm'')=\bl}(-1)^{\wt(\bn'',\bn^{(2)})+\dep(\bn^{(2)},\bm')}\\
\cdot\zeta^{s,\star,\ast}_{\shift}(\bm'';T_{1})\zeta^{-t,\star,\ast}_{\shift}(\overleftarrow{\bn''};T_{2})\zeta^{s,\ast}_{\shift}(\overleftarrow{\bm'},u;T_{1})\zeta^{-t,\ast}_{\shift}(\bn^{(2)};T_{2})R(\bn^{(1)},1;T_{2}),
\end{multlined}\\
f^{s,t}_{4}(\bl;T_{1},T_{2})&\coloneqq
\begin{multlined}[t]
\sum_{(\bn'',\bn^{(2)},\bn^{(1)},1,\bm^{(1)},\bm^{(2)},\bm'')=\bl}(-1)^{\wt(\bn'',\bn^{(2)},\bm^{(1)})+\dep(\bn^{(2)},\bm^{(2)})}\\
\cdot\zeta^{s,\star,\ast}_{\shift}(\bm'';T_{1})\zeta^{-t,\star,\ast}_{\shift}(\overleftarrow{\bn''};T_{2})\zeta^{s,\ast}_{\shift}(\overleftarrow{\bm^{(2)}};T_{1})\zeta^{-t,\ast}_{\shift}(\bn^{(2)};T_{2})R(\bm^{(1)},1;T_{1})R(\bn^{(1)},1;T_{2})
\end{multlined}
\end{align}
and omit $s,t,T_{1},T_{2}$ if there is no risk of confusion.
\end{Lem}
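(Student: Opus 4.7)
The plan is to transform the expression
\[
f(\bl)=\sum_{(\bn,u,\bm)=\bl}\sum_{j=0}^{u-1}(-1)^{\wt(\bn)+u-j}\zeta^{s,\star,\ast}_{\shift}(j+1,\bm;T_{1})\zeta^{-t,\star,\ast}_{\shift}(u-j,\overleftarrow{\bn};T_{2})
\]
in two stages. In the first stage, I would apply the antipode relation for shifted MZVs (Corollary \ref{antipode_relation}) separately to each of the two $\star,\ast$-regularized factors. This peels off outer star factors $\zeta^{s,\star,\ast}_{\shift}(\bm'';T_{1})$ and $\zeta^{-t,\star,\ast}_{\shift}(\overleftarrow{\bn''};T_{2})$, corresponding to splittings $\bm=(\bm',\bm'')$ and $\bn=(\bn'',\bn')$, while the remainder is written as a product of $\ast$-regularized shifted MZVs of the form $\zeta^{s,\ast}_{\shift}(\overleftarrow{\bm'},j+1;T_{1})$ and $\zeta^{-t,\ast}_{\shift}(\bn',u-j;T_{2})$ (up to signs involving $\dep(\bm')$ and $\dep(\bn')$).

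In the second stage, the sum $\sum_{j=0}^{u-1}$ of these $\ast$-regularized shifted MZVs is reorganized and matched against the explicit regularization formula \eqref{eq:explicit_reg_thm}, read in reverse: each such sum can be identified (for fixed $\bm',\bn'$) with a $\sh$-regularized shifted MZV at $T=0$, up to $R$-correction terms accounting for trailing 1's. The four components $f_{1},f_{2},f_{3},f_{4}$ arise as the four possible combinations of whether an $R$-correction is trivial or not on the $s$-side and on the $-t$-side. The constraint $u=1$ visible in $f_{4}$ is forced because a non-trivial $R$ on both sides removes all but the $j=0$ term and absorbs $u$ itself into the regularization data.

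The main obstacle is the combinatorial bookkeeping: tracking how the sum $\sum_{(\bn,u,\bm)=\bl}\sum_{j=0}^{u-1}$ reorganizes after the above rewritings into the nested splittings $(\bn'',\bn',u,\bm',\bm'')$, $(\bn'',\bn',u,\bm^{(1)},\bm^{(2)},\bm'')$, etc., and verifying that the signs $(-1)^{\wt(\cdots)+\dep(\cdots)+u}$ appearing in each $f_{i}$ are precisely those produced by the antipode relation together with \eqref{eq:explicit_reg_thm}. The assumption $\wt(\bl)>\dep(\bl)$ ensures that no degenerate index $\bl=\{1\}^{r}$ appears, which is needed for the rearrangement of the $j$-sum to be unambiguous. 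Once these identifications are set up correctly, the decomposition $f(\bl)=f_{1}(\bl)+f_{2}(\bl)+f_{3}(\bl)+f_{4}(\bl)$ should follow by direct comparison.
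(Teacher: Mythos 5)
Your proposal follows essentially the same route as the paper: first apply the antipode relation (Corollary \ref{antipode_relation}) to split off the outer star factors $\zeta^{s,\star,\ast}_{\shift}(\bm'';T_{1})$ and $\zeta^{-t,\star,\ast}_{\shift}(\overleftarrow{\bn''};T_{2})$, then convert the remaining product of $\ast$-regularized factors via \eqref{eq:explicit_reg_thm}, with the four terms $f_{1},\dots,f_{4}$ arising from the correction-term bookkeeping exactly as you describe (including the $\delta_{j,0}\delta_{j,u-1}$ forcing $u=1$ in $f_{4}$). The only detail left implicit is that the expansion is not the naive $(A-C)(D-F)=AD-AF-CD+CF$ but the rearranged identity $BE=AD-CE-BF-CF$, which is what makes $f_{2}$ and $f_{3}$ carry $\ast$-regularized (rather than $\sh$-regularized) companion factors; this is routine algebra and does not affect the correctness of the approach.
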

\begin{proof}
By applying the equalities
\[\zeta^{s,\star,\ast}_{\shift}(j+1,\bm;T_{1})=\sum_{(\bm',\bm'')=\bm}(-1)^{\dep(\bm')}\zeta^{s,\star,\ast}_{\shift}(\bm'';T_{1})\zeta^{s,\ast}_{\shift}(\overleftarrow{\bm'},j+1;T_{1})\]
and
\[\zeta^{-t,\star,\ast}_{\shift}(u-j,\overleftarrow{\bn};T_{2})=\sum_{(\bn'',\bn')=\bn}(-1)^{\dep(\bn')}\zeta^{-t,\star,\ast}_{\shift}(\overleftarrow{\bn''};T_{2})\zeta^{-t,\ast}_{\shift}(\bn',u-j;T_{2})\]
which is immediately obtained from Corollary \ref{antipode_relation}, we have
\begin{multline}\label{eq:f_after_antipode}
f(\bl)=\sum_{(\bn'',\bn',u,\bm',\bm'')=\bl}\sum_{j=0}^{u-1}(-1)^{\wt(\bn'',\bn')+\dep(\bn',\bm')+u-j}\\
\cdot\zeta^{s,\star,\ast}_{\shift}(\bm'';T_{1})\zeta^{-t,\star,\ast}_{\shift}(\overleftarrow{\bn''};T_{2})\zeta^{s,\ast}_{\shift}(\overleftarrow{\bm'},j+1;T_{1})\zeta^{-t,\ast}_{\shift}(\bn',u-j;T_{2}).
\end{multline}
We consider expanding the factor $\zeta^{s,\ast}_{\shift}(\overleftarrow{\bm'},j+1;T_{1})\zeta^{-t,\ast}_{\shift}(\bn',u-j;T_{2})$. Additionally, the equation \eqref{eq:explicit_reg_thm} establishes
\[\zeta^{s,\sh}_{\shift}(\overleftarrow{\bm'},j+1;0)=\zeta^{s,\ast}_{\shift}(\overleftarrow{\bm'},j+1;T_{1})+\delta_{j,0}\sum_{(\bm^{(1)},\bm^{(2)})=\bm'}\zeta^{s,\ast}_{\shift}(\overleftarrow{\bm^{(2)}};T_{1})R(\bm^{(1)},1;T_{1})\]
and
\[\zeta^{-t,\sh}_{\shift}(\bn',u-j;0)=\zeta^{-t,\ast}_{\shift}(\bn',u-j;T_{2})+\delta_{j,u-1}\sum_{(\bn^{(2)},\bn^{(1)})=\bn'}\zeta^{-t,\ast}_{\shift}(\bn^{(2)};T_{2})R(\bn^{(1)},1;T_{2}).\]
If we express these two equations as $A=B+C$ and $D=E+F$, respectively, what we should expand is $BE$ and this is executed as
\begin{align}
BE
&=(A-C)(D-F)\\
&=AD-CD-AF+CF\\
&=AD-C(E+F)-(B+C)F+CF\\
&=AD-CE-BF-CF,
\end{align}
which means
\begin{align}
&\zeta^{s,\ast}_{\shift}(\overleftarrow{\bm'},j+1;T_{1})\zeta^{-t,\ast}_{\shift}(\bn',u-j;T_{2})\\
&=\begin{multlined}[t]
\zeta^{s,\sh}_{\shift}(\overleftarrow{\bm'},j+1;0)\zeta^{-t,\sh}_{\shift}(\bn',u-j;0)\\
-\delta_{j,0}\zeta^{-t,\ast}_{\shift}(\bn',u-j;T_{2})\sum_{(\bm^{(1)},\bm^{(2)})=\bm'}\zeta^{s,\ast}_{\shift}(\overleftarrow{\bm^{(2)}};T_{1})R(\bm^{(1)},1;T_{1})\\
-\delta_{j,u-1}\zeta^{s,\ast}_{\shift}(\overleftarrow{\bm'},j+1;T_{1})\sum_{(\bn^{(2)},\bn^{(1)})=\bn'}\zeta^{-t,\ast}_{\shift}(\bn^{(2)};T_{2})R(\bn^{(1)},1;T_{2})\\
-\delta_{j,0}\delta_{j,u-1}\sum_{(\bm^{(1)},\bm^{(2)})=\bm'}\zeta^{s,\ast}_{\shift}(\overleftarrow{\bm^{(2)}};T_{1})R(\bm^{(1)},1;T_{1})\sum_{(\bn^{(2)},\bn^{(1)})=\bn'}\zeta^{-t,\ast}_{\shift}(\bn^{(2)};T_{2})R(\bn^{(1)},1;T_{2}).
\end{multlined}
\end{align}
Applying \eqref{eq:f_after_antipode}, we have
\begin{multline}
f(\bl)
=\sum_{(\bn'',\bn',u,\bm',\bm'')=\bl}\sum_{j=0}^{u-1}(-1)^{\wt(\bn'',\bn')+\dep(\bn',\bm')+u-j}\zeta^{s,\star,\ast}_{\shift}(\bm'';T_{1})\zeta^{-t,\star,\ast}_{\shift}(\overleftarrow{\bn''};T_{2})\\
\cdot\zeta^{s,\sh}_{\shift}(\overleftarrow{\bm'},j+1;0)\zeta^{-t,\sh}_{\shift}(\bn',u-j;0)\\
+\sum_{(\bn'',\bn',u,\bm',\bm'')=\bl}\sum_{j=0}^{u-1}(-1)^{\wt(\bn'',\bn')+\dep(\bn',\bm')+u-j+1}\zeta^{s,\star,\ast}_{\shift}(\bm'';T_{1})\zeta^{-t,\star,\ast}_{\shift}(\overleftarrow{\bn''};T_{2})\\
\cdot\delta_{j,0}\zeta^{-t,\ast}_{\shift}(\bn',u-j;T_{2})\sum_{(\bm^{(1)},\bm^{(2)})=\bm'}\zeta^{s,\ast}_{\shift}(\overleftarrow{\bm^{(2)}};T_{1})R(\bm^{(1)},1;T_{1})\\
+\sum_{(\bn'',\bn',u,\bm',\bm'')=\bl}\sum_{j=0}^{u-1}(-1)^{\wt(\bn'',\bn')+\dep(\bn',\bm')+u-j+1}\zeta^{s,\star,\ast}_{\shift}(\bm'';T_{1})\zeta^{-t,\star,\ast}_{\shift}(\overleftarrow{\bn''};T_{2})\\
\cdot\delta_{j,u-1}\zeta^{s,\ast}_{\shift}(\overleftarrow{\bm'},j+1;T_{1})\sum_{(\bn^{(2)},\bn^{(1)})=\bn'}\zeta^{-t,\ast}_{\shift}(\bn^{(2)};T_{2})R(\bn^{(1)},1;T_{2})\\
+\sum_{(\bn'',\bn',u,\bm',\bm'')=\bl}\sum_{j=0}^{u-1}(-1)^{\wt(\bn'',\bn')+\dep(\bn',\bm')+u-j+1}\zeta^{s,\star,\ast}_{\shift}(\bm'';T_{1})\zeta^{-t,\star,\ast}_{\shift}(\overleftarrow{\bn''};T_{2})\\
\cdot\delta_{j,0}\delta_{j,u-1}\sum_{(\bm^{(1)},\bm^{(2)})=\bm'}\zeta^{s,\ast}_{\shift}(\overleftarrow{\bm^{(2)}};T_{1})R(\bm^{(1)},1;T_{1})\sum_{(\bn^{(2)},\bn^{(1)})=\bn'}\zeta^{-t,\ast}_{\shift}(\bn^{(2)};T_{2})R(\bn^{(1)},1;T_{2}).
\end{multline}
The first term is $f_{1}(\bl)$ by definition. The $i$-th term on the right-hand side ($i\in\{2,3,4\}$) also coincides with $f_{i}(\bl)$ owing to Kronecker's deltas. We remark that some visual gaps of sign between the definition of $f_{i}(\bl)$ and the $i$-th term are canceled because $R(\bk;T)$ vanishes when $\bk$ has an entry that is not $1$; for example, in the definition of $f_{2}(\bl)$, the power of sign is
\[\wt(\bn'',\bn',\bm^{(1)})+\dep(\bn',\bm^{(2)})+u+1=\wt(\bn'',\bn')+\dep(\bn',\bm')+u-j+1\quad ((\bm^{(1)},\bm^{(2)})=\bm)\]
when $j=0$ and any entry of $\bm^{(1)}$ is $1$.
\end{proof}
We now compute $f_{i}(\bl)$ for $i\in\{1,2,3,4\}$. First, we address $i=2,3,4$ using Theorem \ref{antipode_relation}.
\begin{Lem}\label{explicit_f234}
For an index $\bl$ with $\wt(\bl)>\dep(\bl)$, we have the followings.
\begin{enumerate}[\rm(1)]
\item\label{f3} $f^{s,t}_{2}(\bl;T_{1},T_{2})=(-1)^{\wt(\bl)+1}f^{-t,-s}_{3}(\overleftarrow{\bl};T_{2},T_{1})$.
\item\label{f4} $f_{4}(\bl)=0$.
\item\label{f2}
\[f_{2}(\bl)=(-1)^{\wt(\bl)+1}\sum_{(\bp,\bm^{(1)})=\bl}\zeta^{-t,\star,\ast}_{\shift}(\overleftarrow{\bp};T_{2})R(\bm^{(1)},1;T_{1}).\]
\end{enumerate}
\end{Lem}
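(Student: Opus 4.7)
The plan is to prove the three parts by a common strategy: each reduces to applying Corollary \ref{antipode_relation} to collapse one of the internal two-part summations in the definition of $f_{i}(\bl)$, combined (for parts (\ref{f4}) and (\ref{f2})) with the vanishing of $R(\bk,1;T)$ whenever $\bk$ contains an entry greater than $1$. A preliminary observation used throughout is the ``reversed'' form of Corollary \ref{antipode_relation}, obtained by applying the original identity to $\overleftarrow{\bp}$ and reindexing:
\[
\sum_{(\bn'',\bn')=\bp}(-1)^{\dep(\bn')}\zeta^{-t,\star,\ast}_{\shift}(\overleftarrow{\bn''};T)\,\zeta^{-t,\ast}_{\shift}(\bn';T)=\delta_{\bp,\emp}.
\]

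For part (\ref{f3}), I would set up a bijective correspondence between the seven-part decompositions indexing $f_{2}^{s,t}(\bl;T_{1},T_{2})$ and those indexing $f_{3}^{-t,-s}(\overleftarrow{\bl};T_{2},T_{1})$: a decomposition $(\bn'',\bn',u,\bm^{(1)},\bm^{(2)},\bm'')$ of $\bl$ maps to $(\overleftarrow{\bm''},\overleftarrow{\bm^{(2)}},\overleftarrow{\bm^{(1)}},u,\overleftarrow{\bn'},\overleftarrow{\bn''})$ of $\overleftarrow{\bl}$. Under the parameter exchange $s\leftrightarrow -t$ and $T_{1}\leftrightarrow T_{2}$, each $\zeta$-factor matches; the $R$-factor also matches because $R(\bk,1;T)$ vanishes unless $\bk$ consists only of $1$'s, in which case reversal is trivial. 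A parity count, using $\wt(\bl)=\wt(\bn'',\bn',\bm^{(1)})+u+\wt(\bm^{(2)},\bm'')$, shows that the two signs differ by $(-1)^{\wt(\bl)+1}$, completing part (\ref{f3}).

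For part (\ref{f4}), I would apply Corollary \ref{antipode_relation} to the inner sum over $(\bm^{(2)},\bm'')=\br$ in $f_{4}(\bl)$, forcing $\br=\emp$; then apply the reversed form displayed above to the $(\bn'',\bn^{(2)})=\bq$ sum, forcing $\bq=\emp$. What remains is
\[
\sum_{(\bn^{(1)},1,\bm^{(1)})=\bl}(-1)^{\wt(\bm^{(1)})}R(\bm^{(1)},1;T_{1})R(\bn^{(1)},1;T_{2}),
\]
which is nonzero only when every entry of $\bl$ is $1$. The hypothesis $\wt(\bl)>\dep(\bl)$ rules this out, so $f_{4}(\bl)=0$.

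Part (\ref{f2}) follows the same template. Grouping $(\bn',u)$ into a single non-empty index $\bq$, I would apply the reversed antipode identity to the sum over $(\bn'',\bq)=\bp$ with $\bq\ne\emp$: this partial sum equals $\delta_{\bp,\emp}-\zeta^{-t,\star,\ast}_{\shift}(\overleftarrow{\bp};T_{2})$, the correction being the missing $\bq=\emp$ term. Collapsing the $(\bm^{(2)},\bm'')=\br$ sum to $\delta_{\br,\emp}$ via Corollary \ref{antipode_relation} leaves $(-1)^{\wt(\bl)+1}\sum_{(\bp,\bm^{(1)})=\bl,\,\bp\ne\emp}\zeta^{-t,\star,\ast}_{\shift}(\overleftarrow{\bp};T_{2})R(\bm^{(1)},1;T_{1})$, and the absent $\bp=\emp$ term carries factor $R(\bl,1;T_{1})=0$ by the hypothesis $\wt(\bl)>\dep(\bl)$, matching the claimed formula. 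The principal obstacle across all three parts is sign bookkeeping: each antipode application introduces a depth-based sign that must be reconciled with the explicit prefactors defining the $f_{i}$, and extracting the ``star-first'' form of Corollary \ref{antipode_relation} from the stated ``star-second'' form is the only slightly nontrivial algebraic step.
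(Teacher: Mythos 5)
Your proposal is correct and follows essentially the same route as the paper: part (1) is a direct change of summation variable (the paper simply calls it obvious from the definitions), and parts (2) and (3) are obtained by regrouping the decomposition so that Corollary \ref{antipode_relation} (and its reversed form, exactly as you derive it) collapses the inner sums to Kronecker deltas, with the hypothesis $\wt(\bl)>\dep(\bl)$ killing the leftover all-ones terms via the vanishing of $R$. Your sign bookkeeping and the treatment of the missing $\bq=\emp$ and $\bp=\emp$ terms agree with the paper's computation.
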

\begin{proof}
(\ref{f3}) is obvious from the definition. By substituting $(\bn'',\bn^{(2)})=\bn'''$ and $(\bm^{(2)},\bm'')=\bm'''$, it holds that
\begin{multline}
f_{4}(\bl)
=\sum_{(\bn''',\bn^{(1)},1,\bm^{(1)},\bm''')=\bl}(-1)^{\wt(\bn''',\bm^{(1)})}R(\bn^{(1)},1;T_{2})R(\bm^{(1)},1;T_{1})\\
\cdot\left[\sum_{(\bn'',\bn^{(2)})=\bn'''}(-1)^{\dep(\bn^{(2)})}\zeta^{-t,\star,\ast}_{\shift}(\overleftarrow{\bn''};T_{2})\zeta^{-t,\ast}_{\shift}(\bn^{(2)};T_{2})\right]\\
\cdot\left[\sum_{(\bm^{(2)},\bm'')=\bm'''}(-1)^{\dep(\bm^{(2)})}\zeta^{s,\star,\ast}_{\shift}(\bm'';T_{1})\zeta^{s,\ast}_{\shift}(\overleftarrow{\bm^{(2)}};T_{1})\right].
\end{multline}
Applying Corollary \ref{antipode_relation} to the expression in brackets yields (\ref{f4}). We can prove (\ref{f2}) with the same idea: compute $f_{2}(\bl)$ as
\begin{multline}
f_{2}(\bl)
=\sum_{(\bp,u,\bm^{(1)},\bp')=\bl}(-1)^{\wt(\bp,\bm^{(1)})+u+1}R(\bm^{(1)},1;T_{1})\left[\sum_{(\bn'',\bn')=\bp}(-1)^{\dep(\bn')}\zeta^{-t,\star,\ast}_{\shift}(\overleftarrow{\bn''};T_{2})\zeta^{-t,\ast}_{\shift}(\bn',u;T_{2})\right]\\
\cdot\left[\sum_{(\bm^{(2)},\bm'')=\bp'}(-1)^{\dep(\bm^{(2)})}\zeta^{s,\star,\ast}_{\shift}(\bm'';T_{1})\zeta^{s,\ast}_{\shift}(\overleftarrow{\bm^{(2)}};T_{1})\right]
\end{multline}
and use Corollary \ref{antipode_relation}.
\end{proof}
For an index $\bl$, let
\[g(\bl)\coloneqq\sum_{(\bn',u,\bm')=\bl}\sum_{j=0}^{u-1}(-1)^{\wt(\bn')+u-j}\zeta^{-t,\sh}_{\shift}(\bn',u-j;0)\zeta^{s,\sh}_{\shift}(\overleftarrow{\bm'},j+1;0).\]
We remark that $g(\emp)=0$. Then the definition of $f_{1}(\bl)$ can be written as
\begin{equation}\label{eq:connection_f1_g}
f_{1}(\bl)=\sum_{(\bn'',\bl',\bm'')=\bl}(-1)^{\wt(\bn'')+\dep(\bl')-1}\zeta^{-t,\star,\ast}_{\shift}(\overleftarrow{\bn''};T_{2})\zeta^{s,\star,\ast}_{\shift}(\bm'';T_{1})g(\bl').
\end{equation}
\begin{Lem}\label{shuffle_lemma}
For a word $w=e_{a_{1}}\cdots e_{a_{N}}$ ($a_{1},\ldots,a_{N}\in\{0,1\}$), we have
\begin{align}
&\sum_{v=0}^{N}(-1)^{v}\frac{1}{1+e_{0}t}e_{1}e_{a_{1}}\cdots e_{a_{v}}\sh\frac{1}{1-e_{0}s}e_{1}e_{a_{N}}\cdots e_{a_{v+1}}\\
&=\frac{1}{1+e_{0}t}\sh\left(\frac{1}{1-e_{0}s}e_{1}\overleftarrow{w}e_{1}\frac{1}{1-e_{0}t}\right)+(-1)^{N}\frac{1}{1-e_{0}s}\sh\left(\frac{1}{1+e_{0}t}e_{1}we_{1}\frac{1}{1+e_{0}s}\right).
\end{align}
\end{Lem}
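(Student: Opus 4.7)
The plan is to apply the antipode identity for the shuffle Hopf algebra on $\fH$ (with deconcatenation coproduct and antipode $S(W)=(-1)^{|W|}\overleftarrow{W}$, where $|W|$ denotes the word length) to the non-empty word $\hat W_{m,n} \coloneqq e_0^m e_1 w e_1 e_0^n$ for each pair $m,n\ge 0$. The relevant antipode identity reads
\[
\sum_{k=0}^{|W|}(-1)^{|W|-k}\,W_{\le k}\sh \overleftarrow{W_{>k}}=0 \qquad (W\neq 1),
\]
where $W_{\le k}$ denotes the first $k$ letters of $W$ and $W_{>k}$ the remaining suffix. The whole proof amounts to applying this identity to $\hat W_{m,n}$, multiplying by $(-1)^{N+1}(-t)^m(-s)^n$, summing over $m,n\ge 0$, and matching the three natural groups of cut positions with the two terms of the right-hand side and the left-hand side of Lemma~\ref{shuffle_lemma}.

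The cut positions $k\in\{0,1,\ldots,m+N+2+n\}$ for $\hat W_{m,n}$ break into three groups: (i) $k=0,\ldots,m$, cuts inside the leading $e_0^m$; (ii) $k=m+1+v$ with $v\in\{0,\ldots,N\}$, cuts strictly between the two distinguished $e_1$'s; (iii) $k=m+N+2+j$ with $j\in\{0,\ldots,n\}$, cuts inside the trailing $e_0^n$. For group (ii), the combined sign simplifies as $(-1)^{N+1}(-t)^m(-s)^n\cdot(-1)^{N+1+n-v}=(-1)^v(-t)^m s^n$, so summation over $m,n\ge 0$ collapses this group to $\sum_{v=0}^N (-1)^v(A e_1 e_{a_1}\cdots e_{a_v})\sh(B e_1 e_{a_N}\cdots e_{a_{v+1}})$, where $A=\frac{1}{1+e_0 t}=\sum_{m\ge 0}(-t)^m e_0^m$ and $B=\frac{1}{1-e_0 s}=\sum_{n\ge 0} s^n e_0^n$; this is precisely the left-hand side of the lemma.

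For group (i), writing $k=j$ and substituting $m'=m-j$ factors the triple sum as a shuffle of $\sum_{j\ge 0}(-t)^j e_0^j=A$ with $\sum_{n\ge 0}s^n e_0^n\cdot e_1\overleftarrow{w}e_1\cdot\sum_{m'\ge 0}t^{m'}e_0^{m'}$, yielding $-A\sh\bigl(B e_1\overleftarrow{w}e_1\frac{1}{1-e_0 t}\bigr)$. Similarly, for group (iii), the substitution $n'=n-j$ produces $(-1)^{N+1}B\sh\bigl(A e_1 w e_1\frac{1}{1+e_0 s}\bigr)$. Since the total sum vanishes by the antipode identity, rearranging gives the desired equality.

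The only real difficulty is the sign and index bookkeeping: the signs $(-1)^{|\hat W_{m,n}|-k}$ coming from the antipode must be combined with the chosen weight $(-1)^{N+1}(-t)^m(-s)^n$ and with the substitutions $m'=m-j$ and $n'=n-j$, so that exactly the four series $\frac{1}{1+e_0 t}$, $\frac{1}{1-e_0 s}$, $\frac{1}{1-e_0 t}$, $\frac{1}{1+e_0 s}$ appear in the correct positions with the correct signs. There is no deeper conceptual obstacle; the statement is a formal consequence of the shuffle--deconcatenation antipode applied to the family $\hat W_{m,n}$.
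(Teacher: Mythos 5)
Your proof is correct, and it takes a genuinely different route from the paper's. The paper telescopes the alternating sum on the left-hand side directly, using the last-letter recursion for $\sh$, but only over the cut positions lying between the two distinguished $e_{1}$'s; this leaves the two boundary terms $\bigl(\frac{1}{1+e_{0}t}\sh\frac{1}{1-e_{0}s}e_{1}\overleftarrow{w}\bigr)e_{1}$ and $(-1)^{N}\bigl(\frac{1}{1+e_{0}t}e_{1}w\sh\frac{1}{1-e_{0}s}\bigr)e_{1}$, which are then converted to the stated form by a separate deshuffling step that hinges on the identity $\frac{1}{1-e_{0}s}\sh\frac{1}{1+e_{0}s}=1$. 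You instead apply the full deconcatenation--antipode identity to each finite word $e_{0}^{m}e_{1}we_{1}e_{0}^{n}$ and sum against the weights $(-1)^{N+1}(-t)^{m}(-s)^{n}$, so that every cut position --- including those inside the $e_{0}$-tails --- is handled uniformly, and the four geometric series $\frac{1}{1+e_{0}t}$, $\frac{1}{1-e_{0}s}$, $\frac{1}{1-e_{0}t}$, $\frac{1}{1+e_{0}s}$ emerge automatically from the reindexings $m'=m-j$ and $n'=n-j$. I verified the bookkeeping: your group (ii) yields exactly the left-hand side of the lemma, group (i) yields $-\frac{1}{1+e_{0}t}\sh\bigl(\frac{1}{1-e_{0}s}e_{1}\overleftarrow{w}e_{1}\frac{1}{1-e_{0}t}\bigr)$, and group (iii) yields $(-1)^{N+1}\frac{1}{1-e_{0}s}\sh\bigl(\frac{1}{1+e_{0}t}e_{1}we_{1}\frac{1}{1+e_{0}s}\bigr)$, so the vanishing of the total sum rearranges to the claimed identity. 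The underlying cancellation is of course the same (the antipode identity is itself proved by telescoping), but your packaging dispenses with the paper's second step and with the auxiliary fact $\frac{1}{1-e_{0}s}\sh\frac{1}{1+e_{0}s}=1$, at the cost of somewhat heavier sign and index bookkeeping, which you have carried out correctly.
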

\begin{proof}
The definition of the shuffle product ensures that the left-hand side is telescopic. Thus, we obtain
\begin{align}
&\sum_{v=0}^{N}(-1)^{v}\frac{1}{1+e_{0}t}e_{1}e_{a_{1}}\cdots e_{a_{v}}\sh\frac{1}{1-e_{0}s}e_{1}e_{a_{N}}\cdots e_{a_{v+1}}\\
&=\left(\frac{1}{1+e_{0}t}\sh\frac{1}{1-e_{0}s}e_{1}\overleftarrow{w}\right)e_{1}+(-1)^{N}\left(\frac{1}{1+e_{0}t}e_{1}w\sh\frac{1}{1-e_{0}s}\right)e_{1}.
\end{align}
Thus, it suffices to prove that
\begin{equation}\label{eq:shuffle_lemma_half}
\left(\frac{1}{1+e_{0}t}e_{1}w\sh\frac{1}{1-e_{0}s}\right)e_{1}=\frac{1}{1-e_{0}s}\sh\left(\frac{1}{1+e_{0}t}e_{1}we_{1}\frac{1}{1+e_{0}s}\right),
\end{equation}
because the remainder of the claim follows by substituting $s\mapsto -t$, $t\mapsto -s$ and $w\mapsto\overleftarrow{w}$.
By the combinatorial description of the shuffle product, the right-hand side of \eqref{eq:shuffle_lemma_half} is equal to
\[
\left(\frac{1}{1-e_{0}s}\sh\frac{1}{1+e_{0}t}e_{1}w\right)e_{1}\left(\frac{1}{1-e_{0}s}\sh\frac{1}{1+e_{0}s}\right).
\]
Since $\frac{1}{1-e_{0}s}\sh\frac{1}{1+e_{0}s}=1$, this is equal to the left-hand side of \eqref{eq:shuffle_lemma_half}.
\end{proof}
\begin{Lem}\label{explicit_g}
For an index $\bl$, we have
\[g(\bl)=-\sum_{j=0}^{\infty}\zeta^{s,\sh}_{\shift}(\overleftarrow{\bl},j+1)t^{j}+(-1)^{\wt(\bl)}\sum_{j=0}^{\infty}\zeta^{-t,\sh}_{\shift}(\bl,j+1)(-s)^{j}.\]
\end{Lem}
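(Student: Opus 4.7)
The plan is to express $g(\bl)$ as $Z^{\sh}$ applied to the left-hand side of Lemma \ref{shuffle_lemma}, apply that lemma, and then exploit the homomorphism property of $Z^{\sh}$ on $(\fH,\sh)$ together with the vanishing $Z^{\sh}(e_{0})=0$.

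First I would use the identity
\[\zeta^{t,\sh}_{\shift}(\bk;0)=(-1)^{\dep(\bk)}Z^{\sh}\!\left(\frac{1}{1-e_{0}t}e_{\bk}\right),\]
which follows immediately from \eqref{eq:shifted_mzv_coefficient}, to rewrite both factors in each summand of $g(\bl)$ as values of $Z^{\sh}$ on elements of $\fH\jump{s,t}$. Combining them by the shuffle-homomorphism property produces a single $Z^{\sh}$ of a shuffle. Setting $N=\dep(\bl)$, $M=\wt(\bl)$, and writing $e_{\bl}=e_{1}w$ with $w=e_{0}^{l_{1}-1}e_{1}e_{0}^{l_{2}-1}\cdots e_{1}e_{0}^{l_{N}-1}$ (so $w$ has length $M-1$), the factorizations $e_{\bn'}e_{1}e_{0}^{u-j-1}=e_{1}\cdot(\text{prefix of }w\text{ of length }v)$ and $e_{\overleftarrow{\bm'}}e_{1}e_{0}^{j}=e_{1}\cdot(\text{reverse of suffix of }w)$ with $v\coloneqq\wt(\bn')+u-j-1$ establish a bijection between triples $(\bn',u,j)$ (with $(\bn',u,\bm')=\bl$ and $0\le j\le u-1$) and $v\in\{0,\dots,M-1\}$. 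Tracking the sign $(-1)^{\wt(\bn')+u-j}$ from the definition of $g$ together with the $(-1)^{\dep(\bn')+\dep(\bm')}=(-1)^{N-1}$ coming from the formula above, one finds
\[g(\bl)=(-1)^{N}\sum_{v=0}^{M-1}(-1)^{v}Z^{\sh}\!\left(\frac{1}{1+e_{0}t}e_{1}w_{[v]}\sh\frac{1}{1-e_{0}s}e_{1}\overleftarrow{w^{[v]}}\right),\]
where $w_{[v]}$ and $w^{[v]}$ are the prefix of length $v$ and suffix of length $M-1-v$ of $w$.

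Next I would apply Lemma \ref{shuffle_lemma} to $w$, which replaces the inner sum by
\[\frac{1}{1+e_{0}t}\sh\!\left(\frac{1}{1-e_{0}s}e_{1}\overleftarrow{w}e_{1}\frac{1}{1-e_{0}t}\right)+(-1)^{M-1}\frac{1}{1-e_{0}s}\sh\!\left(\frac{1}{1+e_{0}t}e_{1}we_{1}\frac{1}{1+e_{0}s}\right).\]
Since $Z^{\sh}(e_{0})=0$ and $Z^{\sh}$ is a shuffle homomorphism, $Z^{\sh}(e_{0}^{n})=\frac{1}{n!}Z^{\sh}(e_{0})^{\sh n}=0$ for $n\ge 1$, so applying $Z^{\sh}$ term by term in the formal variables $s,t$ drops the outer factors $\frac{1}{1+e_{0}t}\sh$ and $\frac{1}{1-e_{0}s}\sh$. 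Using $e_{1}we_{1}=e_{\bl}e_{1}$ and $e_{1}\overleftarrow{w}e_{1}=e_{\overleftarrow{\bl}}e_{1}$, and the generating-series identities
\[\sum_{j\ge 0}\zeta^{s,\sh}_{\shift}(\overleftarrow{\bl},j+1)t^{j}=(-1)^{N+1}Z^{\sh}\!\left(\frac{1}{1-e_{0}s}e_{\overleftarrow{\bl}}e_{1}\frac{1}{1-e_{0}t}\right),\]
\[\sum_{j\ge 0}\zeta^{-t,\sh}_{\shift}(\bl,j+1)(-s)^{j}=(-1)^{N+1}Z^{\sh}\!\left(\frac{1}{1+e_{0}t}e_{\bl}e_{1}\frac{1}{1+e_{0}s}\right),\]
which again follow from the definition of $\zeta^{t,\sh}_{\shift}$, the two surviving terms match the two summands on the right-hand side of the lemma.

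The essential analytic input is already isolated in Lemma \ref{shuffle_lemma} and the vanishing $Z^{\sh}(e_{0}^{n})=0$, so the only genuine difficulty will be bookkeeping: verifying that the map $(\bn',u,\bm',j)\mapsto v=\wt(\bn')+u-j-1$ is indeed a bijection onto $\{0,\dots,M-1\}$, and that the sign factors $(-1)^{\wt(\bn')+u-j}$, $(-1)^{N-1}$, and $(-1)^{M-1}$ combine to produce exactly the coefficients $-1$ and $(-1)^{\wt(\bl)}$ on the two terms of the claimed formula.
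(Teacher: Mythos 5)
Your proposal is correct and follows essentially the same route as the paper: rewrite $g(\bl)$ via \eqref{eq:shifted_mzv_coefficient} as a telescoping sum of products of $Z^{\sh}$-values indexed by the cut position $v$, combine them using the shuffle-homomorphism property, apply Lemma \ref{shuffle_lemma}, and kill the outer factors with $Z^{\sh}(e_0^n)=\delta_{n,0}$; your sign bookkeeping checks out. The only omission is the case $\bl=\emp$ (where $\omega(\bl)$ and the factorization $e_{\bl}=e_1w$ are unavailable), which needs the separate one-line observation that $g(\emp)=0$ while the two series on the right-hand side cancel by symmetry in $j$ and $n$.
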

\begin{proof}
The case $\bl=\emp$ follows from the definition. Assume that $\bl \neq \emp$.
Define $a_{1},\ldots,a_{N}\in\{0,1\}$ by $\omega(\bl)=e_{a_{1}}\cdots e_{a_{N}}$ and let $Z^{\sh}$ and  $\sh$ be defined in $\fH\jump{s,t}$ coefficientwise. By \eqref{eq:shifted_mzv_coefficient}, we have
\begin{align}
g(\bl)
&=\sum_{v=0}^{N}(-1)^{v+\dep(\bl)}Z^{\sh}\left(\frac{1}{1+e_{0}t}e_{1}e_{a_{1}}\cdots e_{a_{v}}\right)Z^{\sh}\left(\frac{1}{1-e_{0}s}e_{1}e_{a_{N}}\cdots e_{a_{v+1}}\right)\\
&=(-1)^{\dep(\bl)}Z^{\sh}\left(\sum_{v=0}^{N}(-1)^{v}\frac{1}{1+e_{0}t}e_{1}e_{a_{1}}\cdots e_{a_{v}}\sh\frac{1}{1-e_{0}s}e_{1}e_{a_{N}}\cdots e_{a_{v+1}}\right).
\end{align}
Using Lemma \ref{shuffle_lemma} and the fact $Z^{\sh}(e_{0}^{n})=\delta_{n,0}$ ($n\ge 0$), we obtain the result.
\end{proof}
\begin{Lem}\label{explicit_f}
For an index $\bl$ with $\wt(\bl)>\dep(\bl)$, we have
\begin{multline}
f(\bl)=\sum_{(\ba,\bb)=\bl}\sum_{j=0}^{\infty}(-1)^{\wt(\ba)}\zeta^{-t,\star,\ast}_{\shift}(\overleftarrow{\ba};T_{2})\zeta^{s,\star,\ast}_{\shift}(j+1,\bb;T_{1})t^{j}\\
-\sum_{(\ba,\bb)=\bl}\sum_{j=0}^{\infty}(-1)^{\wt(\ba)}\zeta^{s,\star,\ast}_{\shift}(\bb;T_{2})\zeta^{-t,\star,\ast}_{\shift}(j+1,\overleftarrow{\ba};T_{1})(-s)^{j}.
\end{multline}
\end{Lem}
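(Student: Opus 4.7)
The plan is to combine the decomposition $f(\bl)=f_1(\bl)+f_2(\bl)+f_3(\bl)+f_4(\bl)$ of Lemma \ref{f_decomposition} with the closed forms of $f_2,f_3,f_4$ from Lemma \ref{explicit_f234} and the explicit evaluation of $g$ in Lemma \ref{explicit_g}. Since $f_4(\bl)=0$ by Lemma \ref{explicit_f234}\eqref{f4}, only $f_1$ requires further work; the strategy is to show that, after converting shuffle-regularized series to harmonic ones, the main part of $f_1(\bl)$ reassembles into the target while the remaining correction terms cancel $f_2(\bl)+f_3(\bl)$ exactly.

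First I would substitute the explicit formula for $g(\bl')$ from Lemma \ref{explicit_g} into \eqref{eq:connection_f1_g}, expressing $f_1(\bl)$ as a sum of products each carrying either a factor $\zeta^{s,\sh}_{\shift}(\overleftarrow{\bl'},j+1;0)$ (the $t^j$-part) or $\zeta^{-t,\sh}_{\shift}(\bl',j+1;0)$ (the $(-s)^j$-part). Applying the regularization conversion \eqref{eq:explicit_reg_thm}, each such shuffle-regularized series splits as a harmonic-regularized value plus correction terms carrying a factor $R(\{1\}^a,1;T_i)$ for some $a\ge 1$. Since $R(\bk'';T)$ vanishes unless every entry of $\bk''$ equals $1$, only decompositions cutting off trailing $1$'s (and, in the $j=0$ case, the appended final $1$ itself) contribute.

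For the main, non-$R$ part I would regroup the three-step decomposition $(\bn'',\bl',\bm'')=\bl$ as a two-step split $(\ba,\bb)=\bl$ with $\ba=\bn''$, followed by $(\bl',\bm'')=\bb$. Applying Corollary \ref{antipode_relation} to the index $(j+1,\bb)$ yields
\[\zeta^{s,\star,\ast}_{\shift}(j+1,\bb;T_1)=\sum_{(\bl',\bm'')=\bb}(-1)^{\dep(\bl')}\zeta^{s,\ast}_{\shift}(\overleftarrow{\bl'},j+1;T_1)\zeta^{s,\star,\ast}_{\shift}(\bm'';T_1),\]
which collapses the inner sum and turns the $t^j$-main part into the first line of the target. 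An analogous antipode identity (derived from Corollary \ref{antipode_relation} with $-t$ in place of $s$ and a dual grouping of the three-step decomposition that merges $\bl'$ with the $\bn''$-block) handles the $(-s)^j$-main part and produces the second line.

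The main obstacle will be verifying that the $R$-correction terms cancel $f_2(\bl)+f_3(\bl)$ exactly. Each correction still carries a leftover product of the form $\zeta^{\bullet,\ast}_{\shift}\cdot\zeta^{\bullet,\star,\ast}_{\shift}$ with $\bullet\in\{s,-t\}$, which I would collapse by one further application of Corollary \ref{antipode_relation} into a single $\zeta^{\bullet,\star,\ast}_{\shift}$-factor. This should produce exactly the closed form $(-1)^{\wt(\bl)+1}\sum_{(\bp,\bm^{(1)})=\bl}\zeta^{-t,\star,\ast}_{\shift}(\overleftarrow{\bp};T_2)R(\bm^{(1)},1;T_1)$ from Lemma \ref{explicit_f234}\eqref{f2}, up to an overall sign of $-1$, with the symmetric statement for the $T_2$-corrections matching $-f_3(\bl)$ via Lemma \ref{explicit_f234}\eqref{f3}. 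Tracking the interplay of the signs $(-1)^{\wt(\bn'')}$, $(-1)^{\dep(\bl')-1}$, and those produced by the two antipode collapses is the most delicate bookkeeping; once verified, adding $f_2(\bl)+f_3(\bl)$ back restores equality with $f(\bl)$ and completes the proof.
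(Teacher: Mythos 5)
Your proposal is correct and follows essentially the same route as the paper: decompose $f=f_1+f_2+f_3+f_4$ via Lemma \ref{f_decomposition}, kill $f_4$ and relate $f_3$ to $f_2$ by the symmetry of Lemma \ref{explicit_f234}, substitute Lemma \ref{explicit_g} into \eqref{eq:connection_f1_g}, convert shuffle- to harmonic-regularized values by \eqref{eq:explicit_reg_thm}, collapse the resulting triple decompositions with the antipode identity (Corollary \ref{antipode_relation}), and observe that the $R$-correction terms cancel $f_2+f_3$ exactly. This is precisely the paper's computation of $f^{s,t}_{1,1}+f_2$ together with the $(s,t)\leftrightarrow(-t,-s)$, $\bl\leftrightarrow\overleftarrow{\bl}$ symmetry for the second line.
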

\begin{proof}
Put
\[f^{s,t}_{1,1}(\bl;T_{1},T_{2})\coloneqq\sum_{(\bn'',\bl',\bm'')=\bl}\sum_{j=0}^{\infty}(-1)^{\wt(\bn'')+\dep(\bl')}\zeta^{-t,\star,\ast}_{\shift}(\overleftarrow{\bn''};T_{2})\zeta^{s,\star,\ast}_{\shift}(\bm'';T_{1})\zeta^{s,\sh}_{\shift}(\overleftarrow{\bl'},j+1;0)t^{j}\]
and
\[f^{s,t}_{1,2}(\bl;T_{1},T_{2})\coloneqq\sum_{(\bn'',\bl',\bm'')=\bl}\sum_{j=0}^{\infty}(-1)^{\wt(\bm'')+\dep(\bl')}\zeta^{-t,\star,\ast}_{\shift}(\overleftarrow{\bn''};T_{2})\zeta^{s,\star,\ast}_{\shift}(\bm'';T_{1})\zeta^{-t,\sh}_{\shift}(\bl',j+1;0)(-s)^{j}\]
for a suitable index $\bl$. Then we have $f^{s,t}_{1,1}(\bl;T_{1},T_{2})=f^{-t,-s}_{1,2}(\overleftarrow{\bl};T_{2},T_{1})$ by definition and $f_{1}(\bl)=f^{s,t}_{1,1}(\bl;T_{1},T_{2})+(-1)^{\wt(\bl)+1}f^{s,t}_{1,2}(\bl;T_{1},T_{2})$ from Lemma \ref{explicit_g}. Moreover, \eqref{eq:explicit_reg_thm} shows
\[\zeta^{s,\sh}_{\shift}(\overleftarrow{\bl'},j+1;0)=\zeta^{s,\ast}_{\shift}(\overleftarrow{\bl'},j+1;T_{1})+\delta_{j,0}\sum_{(\ba,\bb)=\bl'}\zeta^{s,\ast}_{\shift}(\overleftarrow{\bb};T_{1})R(\ba,1;T_{1}).\]
Using Corollary \ref{antipode_relation} and this identity, we have
\begin{align}
&f_{1,1}^{s,t}(\bl;T_{1},T_{2})\\
&=\begin{multlined}[t]
\sum_{(\bn'',\bl',\bm'')=\bl}\sum_{j=0}^{\infty}(-1)^{\wt(\bn'')+\dep(\bl')}\zeta^{-t,\star,\ast}_{\shift}(\overleftarrow{\bn''};T_{2})\zeta^{s,\star,\ast}_{\shift}(\bm'';T_{1})\zeta^{s,\ast}_{\shift}(\overleftarrow{\bl'},j+1;T_{1})t^{j}\\
+\sum_{(\bn'',\ba,\bb,\bm'')=\bl}(-1)^{\wt(\bn'')+\dep(\ba,\bb)}\zeta^{-t,\star,\ast}_{\shift}(\overleftarrow{\bn''};T_{2})\zeta^{s,\star,\ast}_{\shift}(\bm'';T_{1})\zeta^{s,\ast}_{\shift}(\overleftarrow{\bb};T_{1})R(\ba,1;T_{1})
\end{multlined}\\
&=\begin{multlined}[t]
\sum_{(\bn'',\bp)=\bl}\sum_{j=0}^{\infty}(-1)^{\wt(\bn'')}\zeta^{-t,\star,\ast}_{\shift}(\overleftarrow{\bn''};T_{2})\left[\sum_{(\bl',\bm'')=\bp}(-1)^{\dep(\bl')}\zeta^{s,\star,\ast}_{\shift}(\bm'';T_{1})\zeta^{s,\ast}_{\shift}(\overleftarrow{\bl'},j+1;T_{1})\right]t^{j}\\
+\sum_{(\bn'',\ba,\bp)=\bl}(-1)^{\wt(\bn'')+\dep(\ba)}\zeta^{-t,\star,\ast}_{\shift}(\overleftarrow{\bn''};T_{2})R(\ba,1;T_{1})\left[\sum_{(\bb,\bm'')=\bp}(-1)^{\dep(\bb)}\zeta^{s,\star,\ast}_{\shift}(\bm'';T_{1})\zeta^{s,\ast}_{\shift}(\overleftarrow{\bb};T_{1})\right]
\end{multlined}\\
&=\sum_{(\bn'',\bp)=\bl}\sum_{j=0}^{\infty}(-1)^{\wt(\bn'')}\zeta^{-t,\star,\ast}_{\shift}(\overleftarrow{\bn''};T_{2})\zeta^{s,\star,\ast}_{\shift}(j+1,\bp;T_{1})t^{j}+(-1)^{\wt(\bl)}\sum_{(\bn'',\ba)=\bl}\zeta^{-t,\star,\ast}_{\shift}(\overleftarrow{\bn''};T_{2})R(\ba,1;T_{1}).
\end{align}
Here we used $\dep(\ba)=\wt(\ba)$ in the last term unless it vanishes. Combining this computation and Lemma \ref{explicit_f234} (\ref{f2}) yields
\[f^{s,t}_{1,1}(\bl;T_{1},T_{2})+f^{s,t}_{2}(\bl;T_{1},T_{2})=\sum_{(\ba,\bb)=\bl}\sum_{j=0}^{\infty}(-1)^{\wt(\ba)}\zeta^{-t,\star,\ast}_{\shift}(\overleftarrow{\ba};T_{2})\zeta^{s,\star,\ast}_{\shift}(j+1,\bb;T_{1})t^{j}.\]
Consequently, we get
\begin{align}
f(\bl)
&=f^{s,t}_{1}(\bl;T_{1},T_{2})+f^{s,t}_{2}(\bl;T_{1},T_{2})+f^{s,t}_{3}(\bl;T_{1},T_{2})+f^{s,t}_{4}(\bl;T_{1},T_{2})\\
&=f^{s,t}_{1,1}(\bl;T_{1},T_{2})+f^{s,t}_{2}(\bl;T_{1},T_{2})+(-1)^{\wt(\bl)+1}(f^{-t,-s}_{1,1}(\overleftarrow{\bl};T_{2},T_{1})+f^{-t,-s}_{2}(\overleftarrow{\bl};T_{2},T_{1}))\\
&=\begin{multlined}[t]
\sum_{(\ba,\bb)=\bl}\sum_{j=0}^{\infty}(-1)^{\wt(\ba)}\zeta^{-t,\star,\ast}_{\shift}(\overleftarrow{\ba};T_{2})\zeta^{s,\star,\ast}_{\shift}(j+1,\bb;T_{1})t^{j}\\
-\sum_{(\ba,\bb)=\bl}\sum_{j=0}^{\infty}(-1)^{\wt(\ba)}\zeta^{s,\star,\ast}_{\shift}(\bb;T_{1})\zeta^{-t,\star,\ast}_{\shift}(j+1,\overleftarrow{\ba};T_{2})(-s)^{j}
\end{multlined}
\end{align}
by Lemma \ref{explicit_f234} (\ref{f3}), (\ref{f4}) and Lemma \ref{f_decomposition}.
\end{proof}
Lemma \ref{explicit_f} implies that
\begin{align}
F^{\ast}(\bk)
&=\begin{multlined}[t]
\sum_{\bl\in\alpha(\bk)}\Biggl(\sum_{(\ba,\bb)=\bl}\sum_{j=0}^{\infty}(-1)^{\wt(\ba)}\zeta^{-t,\star,\ast}_{\shift}(\overleftarrow{\ba};T_{2})\zeta^{s,\star,\ast}_{\shift}(j+1,\bb;T_{1})t^{j}\Biggr.\\
\Biggl.-\sum_{(\ba,\bb)=\bl}\sum_{j=0}^{\infty}(-1)^{\wt(\ba)}\zeta^{s,\star,\ast}_{\shift}(\bb;T_{1})\zeta^{-t,\star,\ast}_{\shift}(j+1,\overleftarrow{\ba};T_{2})(-s)^{j}\Biggr)
\end{multlined}\\
&=\begin{multlined}[t]
\sum_{\bl\in\alpha(\bk)}\Biggl(\sum_{(\bb,\ba)=\bl}\sum_{j=0}^{\infty}(-1)^{\wt(\ba)}\zeta^{-t,\star,\ast}_{\shift}(\overleftarrow{\ba};T_{2})\zeta^{s,\star,\ast}_{\shift}(j+1,\bb;T_{1})t^{j}\Biggr.\\
\Biggl.-\sum_{(\bb,\ba)=\bl}\sum_{j=0}^{\infty}(-1)^{\wt(\ba)}\zeta^{s,\star,\ast}_{\shift}(\bb;T_{1})\zeta^{-t,\star,\ast}_{\shift}(j+1,\overleftarrow{\ba};T_{2})(-s)^{j}\Biggr)
\end{multlined}\\
&=\begin{multlined}[t]
\sum_{\bl\in\alpha(\bk)}\sum_{j=0}^{\infty}\left(\zeta^{s,t,\star,\ast}_{\hcS}(j+1,\bl;T_{1},T_{2})-(-1)^{k+j+1}\zeta^{-t,\star,\ast}_{\shift}(\overleftarrow{\bl},j+1;T_{2})\right)t^{j}\\
+\sum_{\bl\in\alpha(\bk)}\sum_{j=0}^{\infty}\left(\zeta^{s,t,\star,\ast}_{\hcS}(\bl,j+1;T_{1},T_{2})-\zeta^{s,\star,\ast}_{\shift}(\bl,j+1;T_{1})\right)s^{j}
\end{multlined}
\end{align}
holds. Hence we obtain the case where $\bullet=\ast$ in \eqref{eq:definition_of_F} and Theorem \ref{general_stadic_csf}.
\subsection{Proof of the $(s,t)$-adic $\tau$-interpolated cyclic sum formula}\label{subset:cyclic_sum_formula_tau}
We can also prove the cyclic sum formula for $(s,t)$-adic symmetric multiple zeta (non-star) values:
\begin{Thm}\label{general_stadic_nonstar_csf}
For an index $\bk=(k_{1},\ldots,k_{r})$ whose weight $k$ is greater than $r$, we have
\begin{align}
&\sum_{i=1}^{r}\sum_{j=0}^{k_i-1}\zeta^{s,t,\bullet}_{\hcS}(j+1,\bk^{[i]},\bk_{[i-1]},k_{i}-j;T_{1},T_{2})\\
&=\begin{multlined}[t]
\sum_{i=1}^{r}\sum_{j=0}^{\infty}\left(\zeta^{s,t,\bullet}_{\hcS}(j+1,\bk^{[i]},\bk_{[i]};T_{1},T_{2})+\zeta^{s,t,\bullet}_{\hcS}((j+1)\uplus\bk^{[i]},\bk_{[i]};T_{1},T_{2})\right)t^{j}\\
+\sum_{i=1}^{r}\sum_{j=0}^{\infty}\left(\zeta^{s,t,\bullet}_{\hcS}(\bk^{[i]},\bk_{[i]},j+1;T_{1},T_{2})+\zeta^{s,t,\bullet}_{\hcS}(\bk^{[i]},\bk_{[i]}\uplus (j+1);T_{1},T_{2})\right)s^{j}.
\end{multlined}
\end{align}
Here, for non-empty indices $\bk=(k_{1},\ldots,k_{r})$ and $\bl=(l_{1},\ldots,l_{r'})$, we denote by $\bk\uplus\bl$ an index
\[(k_{1},\ldots,k_{r-1},k_{r}+l_{1},l_{2},\ldots,l_{r'}).\]
\end{Thm}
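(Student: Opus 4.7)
The plan is to mirror the proof of Theorem \ref{general_stadic_csf} carried out in Section \ref{subset:cyclic_sum_formula_star}, with every starred quantity replaced by its non-starred counterpart and with extra $\uplus$-type boundary contributions tracked carefully throughout. By Proposition \ref{stadic_independence} it suffices, as in Section \ref{subset:cyclic_sum_formula_star}, to prove the identity at $\bullet=\ast$ with parameters $T_1,T_2$ in place of $0,0$; the case $\bullet=\sh$ then follows from Theorem \ref{ikz_reg_thm}.

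First, I would expand each summand of the left hand side via the definition
\[\zeta^{s,t,\ast}_{\hcS}(j+1,\bk^{[i]},\bk_{[i-1]},k_{i}-j;T_{1},T_{2})=\sum_{p}(-1)^{\wt(\text{tail})}\zeta^{s,\ast}_{\shift}(\text{head};T_{1})\zeta^{-t,\ast}_{\shift}(\overleftarrow{\text{tail}};T_{2}),\]
producing a decomposition analogous to \eqref{eq:decomposition_stadic_csf}: a ``pure'' part where the split lies at one of the extremes, and a ``mixed'' part $F^{\ast}(\bk)$ where it lies strictly inside.

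Second, the pure part is to be handled by a non-star analog of Proposition \ref{shifted_csf}. Starting from Ohno's classical non-star cyclic sum formula in the form
\[\sum_{(u,\bl)\in\alpha(\bk)}\sum_{j=0}^{u-2}\zeta(j+1,\bl,u-j)=\sum_{\bl\in\alpha(\bk)}\zeta(\bl,1)+k\zeta(k+1),\]
and then applying the shift $b\binom{\bk}{\bn}(-t)^{\wt(\bn)}$ in the same telescoping manner as in the proof of Proposition \ref{shifted_csf}, I would obtain
\[\sum_{(u,\bl)\in\alpha(\bk)}\sum_{j=0}^{u-1}\zeta^{t,\ast}_{\shift}(j+1,\bl,u-j;T)=\sum_{\bl\in\alpha(\bk)}\sum_{j=0}^{\infty}\bigl(\zeta^{t,\ast}_{\shift}(\bl,j+1;T)+\zeta^{t,\ast}_{\shift}(\bl\uplus(j+1);T)\bigr)t^{j}.\]
The extra $\uplus$-term is exactly the bookkeeping difference between $\zeta^{\star}$ and $\zeta$ at the rightmost slot, and after specialization and symmetrization it accounts for the $\uplus$ terms on the right hand side of Theorem \ref{general_stadic_nonstar_csf}.

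Third, I would analyze $F^{\ast}(\bk)$ by replaying Lemmas \ref{f_decomposition}, \ref{explicit_f234}, \ref{explicit_g} and \ref{explicit_f} with stars removed. The role of Corollary \ref{antipode_relation} is taken by its non-star variant, namely $\sum_{(\bl,\bm)=\bk}(-1)^{\dep(\bl)}\zeta^{t,\ast}_{\shift}(\overleftarrow{\bl};T)\zeta^{t,\ast}_{\shift}(\bm;T)=0$ for non-empty $\bk$, which is immediate from applying the antipode of $\fH^{1}_{\ast}$ coming from Lemma \ref{harmonic_hopf} to the non-star version of \eqref{eq:shifted_mzv_coefficient}. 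Lemmas \ref{shuffle_lemma} and \ref{explicit_g} are used verbatim, since they live at the level of the shuffle Hopf algebra and do not distinguish star from non-star.

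The main obstacle will be ensuring that the additional $\uplus$-terms produced in step two recombine correctly with the mixed contributions from step three: the shuffle-antipode identity above introduces cancellations that, in the star proof, were hidden inside the definition $\zeta^{\star}(\bk)=\sum_{\bl\preceq\bk}\zeta(\bl)$, but now produce visible boundary terms of $\uplus$-shape. Careful sign-and-depth bookkeeping, in particular tracking $\dep(\bl)$ versus $\dep(\bl\uplus(j+1))$, is what the argument reduces to.
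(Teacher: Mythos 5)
Your overall strategy---rerunning the proof of Theorem \ref{general_stadic_csf} with the stars removed---breaks down at the step where you replace Corollary \ref{antipode_relation} by "its non-star variant" $\sum_{(\bl,\bm)=\bk}(-1)^{\dep(\bl)}\zeta^{t,\ast}_{\shift}(\overleftarrow{\bl};T)\zeta^{t,\ast}_{\shift}(\bm;T)=0$. This identity is false: the antipode of the quasi-shuffle Hopf algebra in Lemma \ref{harmonic_hopf} is not signed reversal but the signed sum over contractions $\bl\preceq\bk$, which is precisely why one factor in Corollary \ref{antipode_relation} unavoidably carries a star. Concretely, for $\bk=(1,1)$ your proposed sum equals $2\zeta^{t,\ast}_{\shift}(1,1;T)-\zeta^{t,\ast}_{\shift}(1;T)^{2}=-\zeta^{t,\ast}_{\shift}(2;T)\neq 0$ by Corollary \ref{shifted_harmonic_relation}. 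Since this antipode relation is the engine behind Lemma \ref{f_decomposition} (the expansion of $\zeta^{s,\star,\ast}_{\shift}(j+1,\bm;T_{1})$ and $\zeta^{-t,\star,\ast}_{\shift}(u-j,\overleftarrow{\bn};T_{2})$ into products) and behind Lemma \ref{explicit_f234} (the vanishing of $f_{4}$ and the evaluation of $f_{2}$), the de-starred replay collapses at its core: any correct substitute reintroduces star-type contraction sums, so the argument would not be a verbatim translation but a re-derivation of the whole star/non-star dictionary, with the $\uplus$-terms emerging only at the end of that process.

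The paper sidesteps all of this and deduces Theorem \ref{general_stadic_nonstar_csf} (in fact the $\tau$-interpolated Theorem \ref{general_stadic_tau_csf}) formally from the already-proved star case. One introduces the algebra endomorphism $S^{\tau}$ of $\fH$ with $S^{\tau}(e_{1})=e_{1}+\tau e_{0}$ and $S^{\tau}(e_{0})=e_{0}$, encodes the cyclic sum formula as the vanishing of $F^{s,t,\tau,\bullet}\circ L^{s,t,\tau}\circ C$ on the subspace of words containing $e_{0}$, and verifies the commutation relations $F^{s,t,\tau,\bullet}=F^{s,t,0,\bullet}\circ S^{\tau}$, $S^{\tau}\circ L^{s,t,\tau}=L^{s,t,0}\circ S^{\tau}$ and $S^{\tau}\circ C=C\circ S^{\tau}$; the $\tau=1$ case (Theorem \ref{general_stadic_csf}) then transports to every $\tau$, and $\tau=0$ is the assertion. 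Note also that the term $k\zeta^{s,t,\star,\bullet}_{\hcS}(k+1;T_{1},T_{2})$ of the star formula acquires a factor $\tau^{r}$ under this transport and hence disappears at $\tau=0$; your pure-part computation would have to reproduce this cancellation as well. If you wish to keep a hands-on route, the realistic fix is to prove this transport principle rather than to look for a non-star antipode; as written, your plan rests on a false load-bearing identity.
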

From this theorem, one can recover Hirose--Murahara--Ono's $t$-adic cyclic sum formula \cite[Theorem 6.1]{hmo21} by putting $s=0$. For $\tau\in\bbQ$,
define the $\tau$-interpolated $(s,t)$-adic
symmetric multiple zeta values by
\[
\zeta_{\hcS}^{s,t,\tau,\bullet}(\bk;T_{1},T_{2})=\sum_{\bl\preceq\bk}\zeta_{\hcS}^{s,t,\bullet}(\bl;T_{1},T_{2})\tau^{\dep(\bk)-\dep(\bl)}
\]
(see \cite{yamamoto13} for the original $\tau$-interpolated multiple zeta values).
Note that $\zeta_{\hcS}^{s,t,\tau,\bullet}(\bk;T_{1},T_{2})$ coincides
with $\zeta_{\hcS}^{s,t,\bullet}(\bk;T_{1},T_{2})$ (resp. $\zeta_{\hcS}^{s,t,\star,\bullet}(\bk;T_{1},T_{2})$)
when $\tau=0$ (resp. $\tau=1$).
Theorem \ref{general_stadic_nonstar_csf} can be derived from the following $\tau$-interpolated $(s,t)$-adic cyclic sum formula:
\begin{Thm}\label{general_stadic_tau_csf}
For an index $\bk=(k_{1},\ldots,k_{r})$ whose weight $k$ is greater than $r$, we have
\begin{align}
&\sum_{i=1}^{r}\sum_{j=0}^{k_i-1}\zeta^{s,t,\tau,\bullet}_{\hcS}(j+1,\bk^{[i]},\bk_{[i-1]},k_{i}-j;T_{1},T_{2}) - k\tau^r\zeta^{s,t,\star,\bullet}_{\hcS}(k+1;T_{1},T_{2}).  \\
&=\begin{multlined}[t]
\sum_{i=1}^{r}\sum_{j=0}^{\infty}\left(\zeta^{s,t,\tau,\bullet}_{\hcS}(j+1,\bk^{[i]},\bk_{[i]};T_{1},T_{2})+(1-\tau)\zeta^{s,t,\tau,\bullet}_{\hcS}((j+1)\uplus\bk^{[i]},\bk_{[i]};T_{1},T_{2})\right)t^{j}\\
+\sum_{i=1}^{r}\sum_{j=0}^{\infty}\left(\zeta^{s,t,\tau,\bullet}_{\hcS}(\bk^{[i]},\bk_{[i]},j+1;T_{1},T_{2})+(1-\tau)\zeta^{s,t,\tau,\bullet}_{\hcS}(\bk^{[i]},\bk_{[i]}\uplus (j+1);T_{1},T_{2})\right)s^{j}.
\end{multlined}
\end{align}
\end{Thm}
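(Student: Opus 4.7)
The plan is to prove Theorem \ref{general_stadic_tau_csf} by closely following the argument of Section \ref{subset:cyclic_sum_formula_star}, with $\tau$-weights tracked throughout. Using
\[
\zeta^{s,t,\tau,\bullet}_{\hcS}(\bm;T_1,T_2)=\sum_{\bn\preceq\bm}\tau^{\dep(\bm)-\dep(\bn)}\zeta^{s,t,\bullet}_{\hcS}(\bn;T_1,T_2),
\]
both sides of the target identity become polynomials in $\tau$ of degree at most $r=\dep(\bk)$. The overall strategy is to reproduce the decomposition \eqref{eq:decomposition_stadic_csf} in the $\tau$-interpolated setting and carry out the analogous chain of manipulations.

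The first step is to establish a $\tau$-interpolated version of the shifted cyclic sum formula (Proposition \ref{shifted_csf}), asserting that
\[
\sum_{(u,\bl)\in\alpha(\bk)}\sum_{j=0}^{u-1}\zeta^{t,\tau,\bullet}_{\shift}(j+1,\bl,u-j;T)
\]
equals a generating function in $t$ built from $\zeta^{t,\tau,\bullet}_{\shift}(\bl,j+1;T)$ and $(1-\tau)$-weighted $\uplus$-merged terms, together with a correction term at the fully-collapsed depth one. Its proof mirrors the binomial-manipulation argument of Proposition \ref{shifted_csf} after expansion via the preceq formula, ultimately reducing to the classical Ohno--Wakabayashi cyclic sum formula (Theorem \ref{classical_csf}) applied to the star values. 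This step also supplies the correction $-k\tau^r\zeta^{s,t,\star,\bullet}_{\hcS}(k+1)$ on the LHS of Theorem \ref{general_stadic_tau_csf}.

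With the $\tau$-shifted CSF in hand, the LHS of Theorem \ref{general_stadic_tau_csf} decomposes as in \eqref{eq:decomposition_stadic_csf} into a ``shifted part'' handled above plus an interaction term $F^{\bullet}_{\tau}(\bk)$ generalizing $F^{\bullet}(\bk)$. I would expand $F^{\bullet}_{\tau}$ into four subsums $f^{\tau}_1,\ldots,f^{\tau}_4$ using Corollary \ref{antipode_relation} and the regularization theorem (Theorem \ref{ikz_reg_thm}), following the pattern of Lemma \ref{f_decomposition}. The shuffle manipulation of Lemma \ref{shuffle_lemma} and the generating-function formula for $g$ (Lemma \ref{explicit_g}) carry over essentially unchanged, and the analogues of Lemmas \ref{explicit_f234} and \ref{explicit_f} then yield an explicit expression for $F^{\bullet}_{\tau}$ which, combined with the $\tau$-shifted CSF contribution, assembles into the RHS of Theorem \ref{general_stadic_tau_csf}. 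The main obstacle is bookkeeping: each application of the antipode identity or the regularization relation in the $\tau$-interpolated setting produces preceqs that may cross the ``seam'' between the two halves of the cyclic shift, and these seam-crossings are precisely what generates the $(1-\tau)\zeta^{s,t,\tau,\bullet}_{\hcS}((j+1)\uplus\bk^{[i]},\bk_{[i]})t^j$ and $(1-\tau)\zeta^{s,t,\tau,\bullet}_{\hcS}(\bk^{[i]},\bk_{[i]}\uplus(j+1))s^j$ contributions absent from the $\tau=1$ case; verifying that these terms assemble correctly through a generating-function identity in $\tau$ constitutes the main combinatorial content of the proof. Once established, Theorem \ref{general_stadic_nonstar_csf} follows immediately by specializing to $\tau=0$.
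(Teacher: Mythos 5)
Your plan is not the route the paper takes, and as written it has a real gap. The paper does \emph{not} retrace the star-case argument of Section \ref{subset:cyclic_sum_formula_star} with $\tau$-weights; it deduces the whole $\tau$-interpolated statement formally from the already-proven $\tau=1$ case (Theorem \ref{general_stadic_csf}). Concretely, one introduces the algebra endomorphism $S^{\tau}$ of $\fH$ with $S^{\tau}(e_{1})=e_{1}+\tau e_{0}$, $S^{\tau}(e_{0})=e_{0}$, the cyclic operator $C$, and a linear operator $L^{s,t,\tau}$ packaging the right-hand side, so that Theorem \ref{general_stadic_csf} becomes the vanishing of $F^{s,t,1,\bullet}\circ L^{s,t,1}\circ C$ on $\fH e_{0}\fH$. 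The intertwining identities $F^{s,t,\tau,\bullet}=F^{s,t,0,\bullet}\circ S^{\tau}$, $S^{\tau}\circ L^{s,t,\tau}=L^{s,t,0}\circ S^{\tau}$ and $S^{\tau}\circ C=C\circ S^{\tau}$, together with the invertibility of $S^{\tau}$ and the fact that it preserves $\fH e_{0}\fH$, then give $F^{s,t,\tau,\bullet}\circ L^{s,t,\tau}\circ C(w)=F^{s,t,1,\bullet}\circ L^{s,t,1}\circ C(S^{\tau-1}(w))=0$, which is the theorem. No new combinatorics is needed beyond checking these three intertwinings.

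The gap in your proposal is in its first and central step. You assert a $\tau$-interpolated shifted cyclic sum formula whose proof "mirrors the binomial-manipulation argument of Proposition \ref{shifted_csf}, ultimately reducing to the classical Ohno--Wakabayashi cyclic sum formula (Theorem \ref{classical_csf}) applied to the star values." But Theorem \ref{classical_csf} is precisely the $\tau=1$ statement; reducing to it cannot by itself produce the general-$\tau$ identity, whose right-hand side carries the extra $(1-\tau)$-weighted $\uplus$-merged terms that vanish at $\tau=1$. You would need the interpolated cyclic sum formula for ordinary MZVs as an input (or a reduction device such as $S^{\tau}$), and neither is supplied. The remainder of the plan — $\tau$-versions of Lemmas \ref{f_decomposition}, \ref{explicit_f234}, \ref{explicit_g} and \ref{explicit_f} — is likewise deferred to "bookkeeping," and you yourself identify the assembly of the $(1-\tau)$-terms as "the main combinatorial content," which is exactly the part left unproved. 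If you want to salvage your approach, the cleanest fix is the paper's: observe that replacing $e_{1}$ by $e_{1}+\tau e_{0}$ converts $\tau$-interpolated values into non-interpolated ones and commutes with cyclic rotation, and transport the $\tau=1$ theorem along this substitution.
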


In the rest part of this section, we prove Theorem \ref{general_stadic_tau_csf}.
We define $\mathbb{Q}$-algebra endomorphisms $S^{\tau}$ and $A^{\tau}$
of $\fH$ by
\[
S^{\tau}(e_{1})=e_{1}+\tau e_{0},\ A^{\tau}(e_{1})=\tau e_{0},\ S^{\tau}(e_{0})=A^{\tau}(e_{0})=e_{0},
\]
and $\mathbb{Q}$-linear endomorphisms $C$ and $H$ of $\fH$ by
\[
C(e_{a_{1}}\cdots e_{a_{r}})=\sum_{j=1}^{r}e_{a_{j+1}}\cdots e_{a_{r}}e_{a_{1}}\cdots e_{a_{j}},\qquad(a_{1},\dots,a_{r}\in\{0,1\}),
\]
\[
H(1)=H(e_{0}w)=0,\quad H(e_{1}w)=w\qquad(w\in\fH).
\]
They are naturally extended to the endomorphisms of $\fH\jump{s,t}$.
Furthermore, define $\mathbb{Q}\jump{s,t}$-linear continuous maps
$F^{s,t,\tau,\bullet}:\fH\jump{s,t}\to\bbR[T_{1},T_{2}]\jump{s,t}$
and $L^{s,t,\tau}:\fH\jump{s,t}\to\fH\jump{s,t}$ by
\[
F^{s,t,\tau,\bullet}(e_{0}^{k_{0}-1}e_{1}e_{0}^{k_{1}-1}\cdots e_{1}e_{0}^{k_{r}-1})=\zeta_{\widehat{\mathcal{S}}}^{s,t,\tau,\bullet}(k_{0},\dots,k_{r}),
\]
\[
L^{s,t,\tau}(w)=w-A^{\tau}(w)-\sum_{j=0}^{\infty}e_{0}^{j}(e_{1}+(1-\tau)e_{0})H(w)t^{j}-\sum_{j=0}^{\infty}H(w)(e_{1}+(1-\tau)e_{0})e_{0}^{j}s^{j}.
\]
Then, for $k_{1}+\cdots+k_{r}>r$, we have
\[
F^{s,t,1,\bullet}\circ L^{s,t,1}\circ C(u)=0\qquad(u\text{ is any rotation of }e_{1}e_{0}^{k_{1}-1}\cdots e_{1}e_{0}^{k_{r}-1})
\]
by Theorem \ref{general_stadic_csf}. Furthermore, $F^{s,t,1,\bullet}\circ L^{s,t,1}\circ C(e_{0}^{r})=F^{s,t,1,\bullet}(0)=0$
for $r\geq0$. Thus
\[
F^{s,t,1,\bullet}\circ L^{s,t,1}\circ C(w)=0\qquad(w\in\fH'\coloneqq\fH e_{0}\fH\jump{s,t})\label{eq:F1L1c_vanish}
\]
where $\fH'\subset\fH$ is the subspace generated by words which contain
$e_{0}$. By definition, we have
\[
F^{s,t,\tau,\bullet}=F^{s,t,0,\bullet}\circ S^{\tau},\quad S^{\tau}\circ L^{s,t,\tau}=L^{s,t,0}\circ S^{\tau},\quad S^{\tau}\circ C=C\circ S^{\tau},
\]
which imply
\[
F^{s,t,\tau,\bullet}\circ L^{s,t,\tau}\circ C=F^{s,t,0,\bullet}\circ L^{s,t,0}\circ C\circ S^{\tau}.
\]
Thus, for all $w\in\fH'$, we have
\[
F^{s,t,\tau,\bullet}\circ L^{s,t,\tau}\circ C(w)=F^{s,t,0,\bullet}\circ L^{s,t,0}\circ C\circ S^{\tau}(w)=F^{s,t,1,\bullet}\circ L^{s,t,1}\circ C(S^{\tau-1}(w))=0,
\]
which implies Theorem \ref{general_stadic_tau_csf} by putting $w=e_{1}e_{0}^{k_{1}-1}\cdots e_{1}e_{0}^{k_{r}-1}$.

\section{Finite analogue}
We discuss a finite counterpart of $(s,t)$-adic symmetric multiple zeta values.
Let us give a review of the refined Kaneko--Zagier conjecture. For a positive integer $n$, define a $\bbQ$-algebra $\cA_{n}$ by
\[\cA_{n}\coloneqq\left(\prod_{p}\bbZ/p^{n}\bbZ\right)\biggm/\left(\bigoplus_{p}\bbZ/p^{n}\bbZ\right),\]
where $p$ runs all prime numbers.
Then we can define the projective limit $\hcA\coloneqq\varprojlim_{n}\cA_{n}$ by natural projections $\cA_{n+1}\to\cA_{n}$.
Let $\ilp\coloneqq((p\mod p^{n})_{p})_{n}\in\hcA$.
As an element of $\hcA$, put
\[\zeta_{\hcA}(\bk)\coloneqq\left(\left(\sum_{0<n_{1}<\cdots<n_{r}<p}\frac{1}{n_{1}^{k_{1}}\cdots n_{r}^{k_{r}}}\mod p^{n}\right)_{p}\right)_{n}\]
for an index $\bk=(k_{1},\ldots,k_{r})$, and call it the \emph{$\ilp$-adic finite multiple zeta value}.
Let $\cZ_{\hcA}$ be the $\bbQ$-topological algebra (equipped with the $\ilp$-adic topology) generated by all $\ilp$-adic finite MZVs (including $\zeta_{\hcA}(\emp)\coloneqq 1$) and $\ilp$.
\begin{Conj}[{\cite[Conjecture 4.3]{osy21}}, {\cite[Conjecture 5.3.2]{jarossay19}}, {\cite[Conjecture 2.3]{rosen19}}]\label{refined_kz_conj}
There is a topological $\bbQ$-algebra isomorphism $\hper\colon(\cZ/\zeta(2)\cZ)\jump{t}\to\cZ_{\hcA}$ sending $\zeta_{\hcS}(\bk)$ to $\zeta_{\hcA}(\bk)$ and $t$ to $\ilp$.
\end{Conj}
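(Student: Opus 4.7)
The plan is to build the map $\hper$ by transporting every structural relation we know for $t$-adic SMZVs to the finite side, and then argue for bijectivity in two independent steps. First, I would try to \emph{construct} a candidate continuous $\bbQ$-algebra homomorphism. The generators are $t$ and the images $\zeta_{\hcS}(\bk)$, so it suffices to check that every relation in the defining presentation of $(\cZ/\zeta(2)\cZ)\jump{t}$ (as a subalgebra with these generators) is satisfied, after substitution $t\mapsto\pp$ and $\zeta_{\hcS}(\bk)\mapsto\zeta_{\hcA}(\bk)$, in $\cZ_{\hcA}$. The main inputs are the $t$-adic double shuffle relations, duality, the cyclic sum formula, and any further relations derivable from the results of the earlier sections (e.g. specializing the $(s,t)$-adic theorems at $s=0$). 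Each must be matched by a corresponding congruence modulo $p^{n}$ for almost all primes $p$; these are in many cases already known in the literature (double shuffle for $\zeta_{\hcA}$, the Kawashima-type/CSF relations for $\pp$-adic finite MZVs, etc.). So this first step is essentially a relation-by-relation verification against the finite side, together with a check that the topologies match so that the homomorphism extends continuously to formal power series in $t$.

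Second, and far more seriously, I would have to prove that the resulting map is a \emph{bijection}. Surjectivity reduces to showing that $\cZ_{\hcA}$ is generated as a topological $\bbQ$-algebra by $\{\zeta_{\hcA}(\bk)\}_{\bk}$ and $\pp$, which is expected but not known in the requisite refined form; one would attempt it by exhibiting a generating family explicitly, perhaps via the finite analogue of the regularization procedure and a Hoffman/Reutenauer-type isomorphism at the level of $\hcA$. Injectivity is more delicate: it amounts to showing that any $t$-adic linear relation among SMZVs that holds modulo $\zeta(2)$ is forced by the relations already transported, with no \emph{new} relations appearing on the finite side. A natural strategy is to interpret injectivity as the statement that every $\bbQ$-linear relation among the $\zeta_{\hcA}(\bk)\pp^{m}$ is a specialization of a relation already present among the $\zeta_{\hcS}(\bk)t^{m}$, and to attack this by comparing both sides with a common motivic/Galois upper bound (as in the programs of Rosen and Jarossay cited in the conjecture statement).

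The hard part will unquestionably be this second step. A full proof of injectivity at finite level would imply the Kaneko--Zagier dimension conjecture for finite MZVs, and surjectivity in the refined form implies strong statements about $\cZ_{\hcA}$ that are currently out of reach; indeed, even the unrefined $t=0$ version of the statement is an open conjecture. A realistic intermediate target, which the $(s,t)$-adic machinery of this paper seems designed to enable, is to prove the existence of the homomorphism in one direction (preservation of all established relations, without bijectivity), and then use the new $(s,t)$-adic duality, cyclic sum formula, and double shuffle relations to cut down the possible kernel. The expected obstacle is therefore not the algebraic bookkeeping on either side, but the transcendence-theoretic gap between the known relations and the conjectured complete set of relations; bridging that gap is precisely what makes Conjecture \ref{refined_kz_conj} an open problem, and any genuinely new progress would most likely come from a motivic input beyond what is developed here.
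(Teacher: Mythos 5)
This statement is a \emph{conjecture}, not a theorem: the paper attributes it to Ono--Seki--Yamamoto, Jarossay and Rosen, offers no proof, and uses it only as a hypothesis (Theorem \ref{st_kz_conj} is proved \emph{under} Conjecture \ref{refined_kz_conj}, and the final Remark likewise works conditionally on it). So there is no proof in the paper to compare yours against, and your own assessment --- that the existence, injectivity and surjectivity of $\hper$ are all out of reach, with the transcendence-theoretic gap between known and conjectured relations as the true obstruction --- is accurate. You were right not to claim a proof; what you have written is a survey of the difficulties rather than an argument, and that is consistent with the actual status of the statement. One small calibration: the role this conjecture plays in the paper is the reverse of what a proof attempt would need. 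The authors do not try to derive finite-side relations from symmetric-side ones (or vice versa) to build $\hper$; rather, they assume $\hper$ exists and then show (Theorem \ref{st_kz_conj}) that the $(s,t)$-adic values $\zeta^{s,t}_{\hcS}(\bk)$ are the unique lift matching the shifted finite values $\zeta_{\hcA}(\bk;a)$ under $\hper$, and that the known $\pp$-adic relations for $\zeta_{\hcA}$ become equivalent to special cases of their $(s,t)$-adic theorems. Your proposed ``relation-by-relation verification'' would at best establish a well-defined map on the subalgebra generated by the \emph{known} relations, which is strictly weaker than the conjectured isomorphism of the full algebras; as you note, closing that gap is precisely the open problem.
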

For an integer $a$, we define an element $\zeta_{\hcA}(\bk;a)\in\hcA$ by
\begin{equation}\label{eq:definition_fmzv_with_a}
\zeta_{\hcA}(\bk;a)\coloneqq\left(\left(\sum_{ap<n_{1}<\cdots<n_{r}<(a+1)p}\frac{1}{n_{1}^{k_{1}}\cdots n_{r}^{k_{r}}}\mod p^{n}\right)_{p}\right)_{n}.
\end{equation}
Obviously $\zeta_{\hcA}(\bk;0)=\zeta_{\hcA}(\bk)$ holds.
We suggest these values as a candidate of the finite analogue of $(s,t)$-adic SMZVs:
\begin{Thm}\label{st_kz_conj}
Let $\bk$ be an index.
Then $\zeta^{s,t}_{\hcS}(\bk)$ is the unique element of $(\cZ/\zeta(2)\cZ)\jump{s,t}$ satisfying
\begin{equation}\label{eq:st_kz_conj}
\hper(\zeta^{at,(a+1)t}_{\hcS}(\bk))= \zeta_{\hcA}(\bk;a)
\end{equation}
for any integer $a$ under Conjecture \ref{refined_kz_conj}.
\end{Thm}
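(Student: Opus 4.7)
My plan treats uniqueness and existence separately, both under Conjecture \ref{refined_kz_conj}.

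For uniqueness, suppose $f, g \in (\cZ/\zeta(2)\cZ)\jump{s,t}$ both satisfy \eqref{eq:st_kz_conj}, and put $h = f - g$. Injectivity of $\hper$ forces $h(at, (a+1)t) = 0$ in $(\cZ/\zeta(2)\cZ)\jump{t}$ for every integer $a$. Writing $h = \sum h_{m,n} s^m t^n$, the coefficient of $t^N$ in $h(at,(a+1)t)$ is $\sum_{m+n=N} h_{m,n}\, a^m (a+1)^n$, a polynomial in $a$ of degree at most $N$; its vanishing at all $a \in \bbZ$ forces the polynomial itself to be zero. Expanding $a^m(a+1)^{N-m} = \sum_{l \ge m}\binom{N-m}{l-m}\, a^l$ shows that the transition from $\{h_{m,n}\}_{m+n=N}$ to the coefficients of $1, a, \dots, a^N$ is given by a triangular matrix with unit diagonal, so each $h_{m,n}$ vanishes and $f = g$.

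For existence, the core of the argument is the identity
\[
\zeta^{at,(a+1)t}_{\hcS}(\bk) = \sum_{\bj \in \bbZ_{\ge 0}^{\dep(\bk)}} b\binom{\bk}{\bj}(-at)^{\wt(\bj)}\, \zeta_{\hcS}(\bk \oplus \bj) \qquad (\star)
\]
in $(\cZ/\zeta(2)\cZ)\jump{t}$ for every index $\bk$ and every integer $a$. To prove $(\star)$, expand $\zeta_{\hcS}(\bk \oplus \bj)$ using the $s=0$ case of Definition \ref{stadic_smzv}, split $\bj = (\bj_{[i]}, \bj^{[i]})$ at the same position $i$ appearing in the symmetric sum, and recombine the signs so that the $\bj_{[i]}$-sum reproduces $\zeta^{at,\bullet}_{\shift}(\bk_{[i]}; T)$ directly. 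For the $\bj^{[i]}$-sum combined with the power series from $\zeta^{-t,\bullet}_{\shift}(\overleftarrow{\bk^{[i]} \oplus \bj^{[i]}}; T)$, regroup by $\bp = \bj^{[i]} + \bm$ and apply the Vandermonde-type identity
\[
\sum_{j=0}^{p} \binom{k+j-1}{j}\binom{k+p-1}{p-j}\, a^j = \binom{k+p-1}{p}(1+a)^p,
\]
itself a consequence of the factorial identity $\binom{k+l-1}{l}\binom{k+p-1}{p-l} = \binom{k+p-1}{p}\binom{p}{l}$; the collapsed sum is precisely $\zeta^{-(a+1)t,\bullet}_{\shift}(\overleftarrow{\bk^{[i]}}; T)$. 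Summing over $i$ then recovers $\zeta^{at,(a+1)t,\bullet}_{\hcS}(\bk; T, T)$, which equals $\zeta^{at,(a+1)t}_{\hcS}(\bk)$ modulo $\zeta(2)$ by Proposition \ref{stadic_independence}.

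To conclude, apply $\hper$ to $(\star)$ and use $\hper(\zeta_{\hcS}(\bk')) = \zeta_{\hcA}(\bk')$ together with $\hper(t) = \ilp$: the right-hand side becomes $\sum_{\bj} b\binom{\bk}{\bj}(-a\ilp)^{\wt(\bj)}\zeta_{\hcA}(\bk \oplus \bj)$. This equals $\zeta_{\hcA}(\bk; a)$ by the change of variable $n_i = ap + m_i$ with $0 < m_i < p$ in \eqref{eq:definition_fmzv_with_a} (valid for every integer $a$, since $ap$ is a multiple of $p$), followed by the $p$-adically convergent expansion $(m_i + ap)^{-k_i} = \sum_{j_i \ge 0} b\binom{k_i}{j_i}(-ap)^{j_i} m_i^{-k_i-j_i}$ in $\bbZ/p^n$ and summation over $0 < m_1 < \cdots < m_r < p$. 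The main obstacle is spotting the right reparametrization in $(\star)$: rewriting the double sum over $(\bj^{[i]}, \bm)$ as a single sum over $\bp = \bj^{[i]} + \bm$ so that the Vandermonde identity applies is the key structural observation, after which the remainder is essentially bookkeeping.
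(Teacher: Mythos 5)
Your proposal is correct and follows essentially the same route as the paper: the existence part rests on exactly the same binomial recombination $b\binom{k+l-1}{l}\binom{k+p-1}{p-l}=\binom{k+p-1}{p}\binom{p}{l}$ collapsing the double sum into $\zeta^{-(a+1)t,\bullet}_{\shift}$, merely organized as an identity $(\star)$ on the symmetric side before applying $\hper$ rather than starting from the $p$-adic expansion of $\zeta_{\hcA}(\bk;a)$, and the uniqueness part is the paper's Vandermonde argument with the triangular binomial matrix substituted for the evaluation at $a/(a+1)$. No gaps.
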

\begin{proof}
Fix an index $\bk=(k_{1},\ldots,k_{r})$.
Since the $p$-component of $\zeta_{\hcA}(\bk;a)$ has the series expression
\begin{align}
\sum_{ap<n_{1}<\cdots<n_{r}<(a+1)p}\frac{1}{n_{1}^{k_{1}}\cdots n_{r}^{k_{r}}}
&=\sum_{0<n_{1}<\cdots<n_{r}<p}\frac{1}{(n_{1}+ap)^{k_{1}}\cdots (n_{r}+ap)^{k_{r}}}\\
&=\sum_{\bm=(m_{1},\ldots,m_{r})\in\bbZ^{r}_{\ge 0}}b\left({\bk\atop\bm}\right)\left(\sum_{0<n_{1}<\cdots<n_{r}<p}\frac{1}{n_{1}^{k_{1}+m_{1}}\cdots n_{r}^{k_{r}+m_{r}}}\right)(-ap)^{\wt(\bm)}
\end{align}
in $\bbZ_{p}$, we compute $\zeta_{\hcA}(\bk;a)$ by Conjecture \ref{refined_kz_conj} as
\begin{align}
\zeta_{\hcA}(\bk;a)
&=\sum_{\bm\in\bbZ^{r}_{\ge 0}}b\left({\bk\atop\bm}\right)\zeta_{\hcA}(\bk\oplus\bm)(-a\ilp)^{\wt(\bm)}\\
&=\sum_{\bm\in\bbZ^{r}_{\ge 0}}b\left({\bk\atop\bm}\right)\hper\left(\sum_{i=0}^{r}(-1)^{\wt(\bk^{[i]}\oplus\bm^{[i]})}\zeta^{\bullet}(\bk_{[i]}\oplus\bm_{[i]})\zeta^{-t,\bullet}_{\shift}(\overleftarrow{\bk^{[i]}\oplus\bm^{[i]}})\right)(-a\ilp)^{\wt(\bm)}\\
&=\begin{multlined}[t]
\sum_{i=0}^{r}(-1)^{\wt(\bk^{[i]})}\left(\sum_{\bm\in\bbZ^{i}_{\ge 0}}b\left({\bk_{[i]}\atop\bm}\right)\hper(\zeta^{\bullet}(\bk_{[i]}\oplus\bm))(-a\ilp)^{\wt(\bm)}\right)\\
\cdot\left(\sum_{\bm,\bn\in\bbZ^{r-i}_{\ge 0}}b\left({\bk^{[i]}\atop\bm}\right)b\left({\bk^{[i]}\oplus\bm\atop\bn}\right)\hper(\zeta^{\bullet}(\overleftarrow{\bk^{[i]}\oplus\bm\oplus\bn}))(a\ilp)^{\wt(\bm)}\ilp^{\wt(\bn)}\right).
\end{multlined}
\end{align}
On the other hand, we have
\begin{align}
\sum_{\bm\in\bbZ^{i}_{\ge 0}}b\left({\bk_{[i]}\atop\bm}\right)\hper(\zeta^{\bullet}(\bk_{[i]}\oplus\bm))(-a\ilp)^{\wt(\bm)}&=\hper(\zeta^{at,\bullet}_{\shift}(\bk_{[i]})),\\
\sum_{\bm,\bn\in\bbZ^{r-i}_{\ge 0}}b\left({\bk^{[i]}\atop\bm}\right)b\left({\bk^{[i]}\oplus\bm\atop\bn}\right)\hper(\zeta^{\bullet}(\overleftarrow{\bk^{[i]}\oplus\bm\oplus\bn}))(a\ilp)^{\wt(\bm)}\ilp^{\wt(\bn)}&=\hper(\zeta^{-(a+1)t,\bullet}_{\shift}(\overleftarrow{\bk^{[i]}})).
\end{align}
These equations establishes \eqref{eq:st_kz_conj}.
We prove the uniqueness: let $m$ and $n$ be non-negative integers, and $v(m,n)$ be an element of $\cZ/\zeta(2)\cZ$.
It is enough to show that if
\begin{equation}\label{eq:hypothesis_uniqueness}
\hper\left(\sum_{m,n=0}^{\infty}v(m,n)(at)^{m}((a+1)t)^{n}\right)=0
\end{equation}
for any $a\in\bbZ_{\ge 0}$, then $v(m,n)=0$ for any $m,n\in\bbZ_{\ge 0}$.
Assume that \eqref{eq:hypothesis_uniqueness} holds.
Then the injectivity of $\hper$ (Conjecture \ref{st_kz_conj}) shows
\[\sum_{i=0}^{N}v(i,N-i)a^{i}(a+1)^{N-i}=0.\]
In particular, we obtain
\[\sum_{i=0}^{N}v(i,N-i)\left(\frac{a}{a+1}\right)^{i}=0\]
for $a\in\bbZ_{\ge 0}$.
Thus the regularity of the Vandermonde matrix yields $v(i,N-i)=0$ for any $N\ge 0$ and $0\le i\le N$.
\end{proof}
\begin{Rem}
    Putting $a=0$ in Theorem \ref{st_kz_conj}, we have $\hper(\zeta_{\hcS}^{0,t}(\bk))=\zeta_{\hcA}(\bk)$. There are some known formulas for $\ilp$-adic FMZVs: the $\ilp$-adic harmonic relation, $\ilp$-adic shuffle relation (\cite[Theorem 6.4]{seki17}, \cite[Proposition 3.4.3 (ii)]{jarossay19}), $\ilp$-adic duality (\cite[Theorem 1.3]{seki19}) and $\ilp$-adic cyclic sum formula (\cite[Theorem 5.7]{kawasaki19}). These relations are equivalent to the special cases of Theorems \ref{stadic_harmonic_relation}, \ref{stadic_shuffle_relation}, \ref{stadic_duality} and \ref{stadic_csf}, respectively, under Conjecture \ref{refined_kz_conj}.
\end{Rem}

\end{document}